\newcommand\eps{\epsilon}
\newcommand\id{{\operatorname{Id}}}
\newcommand\lessfine{\operatorname{\preceq}}
\newcommand\NN{{\mathbb N}}
\newcommand\Pena{{\mathbb P}'_{\operatorname{erg}}}
\newcommand\pen{\Pena}
\newcommand\Prob{{\mathbb P}}
\newcommand\RR{{\mathbb R}}
\newcommand\sepa{{\vert\,}}
\newcommand\spa{{\emptyset}}
\newcommand\topo{{\operatorname{top}}}
\newcommand\ZZ{{\mathbb Z}}
\newcommand\mme{m.m.e.}
\newcommand{\epm}{entropy-period-maximal}
\newtheorem{theorem}{Theorem}[section]
\newtheorem{proposition}[theorem]{Proposition}
\newtheorem{lemma}[theorem]{Lemma}
\newtheorem{corollary}[theorem]{Corollary}
\newtheorem{fact}[theorem]{Fact}
\theoremstyle{definition}
\newtheorem{definition}[theorem]{Definition}
\newtheorem{example}[theorem]{Example}
\newtheorem{xca}[theorem]{Exercise}
\theoremstyle{remark}
\newtheorem{remark}[theorem]{Remark}
\numberwithin{equation}{section}
\newcommand\alt[1]{\left\{\begin{array}{cl}#1\end{array}\right.}
\newcommand{\Proberg}{{\Prob_\erg}}
\newcommand{\erg}{{\operatorname{erg}}}
\newcommand{\notfiner}{{\preceq}}
\newcommand{\per}{{\operatorname{per}}}
\begin{document}

\title[Almost Borel structure]{The almost Borel structure of diffeomorphisms with some hyperbolicity}


\author{J\'er\^ome Buzzi} 
\address{Laboratoire de Math\'ematiques d'Orsay (CNRS \& UMR 8628), B\^at. 425, Universit\'e Paris-Sud, 91405 Orsay France}
\email{jerome.buzzi@math.u-psud.fr}
\thanks{The author gratefully acknowledges the support of the semester Hyperbolicity, large deviations and fluctuations organized at the {\it Centre Interfacultaire Bernoulli} at {\it \'Ecole polytechnique f\'ed\'erale de Lausanne} where a version of these lectures was delivered}


\subjclass[2010]{Primary 37A35; Secondary: 37D25, 37B10.}

\date{Compiled on \today}

\begin{abstract}
These lectures focus on a recent result of Mike Hochman: an arbitrary standard Borel system can be embedded into a mixing Markov with equal entropy, respecting \emph{all invariant probability measures}, with two exceptions: those carried by periodic orbits and those with maximal entropy. We discuss the corresponding notions of almost Borel embedding and isomorphism and universality.

The main part of this paper is devoted to a self-contained and detailed proof of Hochman's theorem.  We then explain how Katok's horseshoe theorem can be used to analyze diffeomorphisms with "enough" measures that are hyperbolic in the sense of Pesin theory, in both mixing and non-mixing situations. In the latter setting, new invariants generalizing the measures maximizing the entropy emerge.
\end{abstract}

\maketitle

\setcounter{tocdepth}{1}
\tableofcontents


\section{Introduction}

Ornstein's classical theory \cite{Ornstein} gave powerful criteria to show that many naturally occuring measure-preserving transformations are isomorphic to Bernoulli schemes and are completely classified by their entropy. Since then, similar classifications by entropy have been obtained in more rigid categories. One of the first such results is the classification by Adler and Marcus \cite{AM} of mixing shifts of finite type up to almost topological conjugacy. The goal of these lectures is to explain a recent, striking extension of this circle of ideas achieved by Mike Hochman \cite{Hochman}. We will give an essentially self-contained proof and some applications to smooth dynamics, mainly based on Katok's theorem on the approximation of hyperbolic measures by horseshoes \cite{KatokIHES}.

\subsection{Hochman's Theorem}

The subject of these lectures is the following:\footnote{We refer to Sec. \ref{sec.back} for notations, definitions and background.}

\begin{theorem}[Hochman \cite{Hochman}]\label{thm.Hochman1}
Let $(\Sigma,\sigma)$ be a mixing Markov shift with entropy $h(\Sigma)$. Given any standard Borel dynamical system (or Borel system, for short) $(X,S)$, let $\Pena(S)$ be the set of its aperiodic, ergodic invariant probability measures.

Then any Borel dynamical system $(X,S)$ such that:
 $$
 (*)\qquad\qquad \forall\mu\in\Pena(S) \quad h(S,\mu)<h(\Sigma)\qquad
 $$
has an \emph{almost Borel embedding} in $\Sigma$, i.e., there is a map $\psi:X'\to \Sigma$ satisfying:
 \begin{enumerate}
 \item $X'$ is Borel with  $\mu(X')=1$ for any $\mu\in\Pena(S)$;
 \item $\psi:X'\to\Sigma$ is Borel;
 \item $\psi\circ S=\sigma\circ\psi$ over $X'$. 
 \end{enumerate}
In other words, $\Sigma$ is \emph{almost Borel universal} in the class of standard Borel systems satisfying (*).
\end{theorem}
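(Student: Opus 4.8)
The plan is to construct $\psi$ in four stages: measurable reductions, a symbolic recoding of the source, a multi-scale block code into $\Sigma$, and a verification. \emph{Reductions.} The set of periodic points is null for every $\mu\in\Pena(S)$, so we may discard it and assume $S$ aperiodic. Using the Borel ergodic decomposition $x\mapsto\mu_x$ and the Borelness of $\mu\mapsto h(S,\mu)$, we may cut a co-null invariant Borel subset of $X$ into countably many invariant Borel pieces on each of which every ergodic component has entropy bounded by some $h'<h(\Sigma)$; it then suffices to embed each piece with images that can be made pairwise disjoint and with Borel union. So we may assume a \emph{uniform entropy gap} $h^*:=\sup_{\mu\in\Pena(S)}h(S,\mu)<h(\Sigma)$. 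Finally, since $h(\Sigma)$ equals the supremum of the entropies of the mixing shifts of finite type it contains, we may replace $\Sigma$ by such a subshift of entropy $>h^*$, and assume henceforth that $\Sigma$ is a mixing shift of finite type with $h(\Sigma)>h^*$.

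\emph{Symbolic recoding.} Since $S$ is an aperiodic Borel automorphism of a standard Borel space, a coding by a countable Borel generator reduces us to the case where $X$ is a co-null invariant Borel subset of a subshift $Y\subseteq\NN^{\ZZ}$, $S$ the shift, and every $\mu\in\Pena(S)$ still aperiodic ergodic of entropy $\le h^*$.

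\emph{The multi-scale block code} (the heart of the proof). Apply the marker lemma to the aperiodic system $Y$ to get a vanishing decreasing sequence $M_1\supseteq M_2\supseteq\cdots$ of Borel complete sections, $\bigcap_n M_n=\emptyset$, whose consecutive return times exceed $N_n$, with $N_n\to\infty$ fast and chosen along the construction. Off a Borel set null for all $\mu\in\Pena(S)$, each orbit is then cut into level-$n$ blocks of length in $[N_n,2N_n)$, obtained by grouping consecutive level-$(n-1)$ blocks, so that the block decompositions are nested. One builds recursively in $n$ a Borel assignment $w\mapsto c_n(w)$ sending each source block $w$ that actually occurs (a finite word over $\NN$, of which there are only countably many, so Borelness is automatic) to a legal word of $\Sigma$ of the \emph{same length}, injective on each length and compatible with the nesting: $c_{n+1}$ of a block equals the concatenation of the $c_n$'s of its sub-blocks, up to short legal \emph{filler} words at the sub-block boundaries. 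Two facts make this possible. \emph{Counting}: a mixing shift of finite type of entropy $h(\Sigma)$ has at least $e^{\ell(h(\Sigma)-\eps)}$ legal words of length $\ell$ for $\ell$ large, whereas for $\ell$ large the number of source blocks to be encoded is at most $e^{\ell(h^*+\eps)}$ --- choosing $\eps$ with $h^*+\eps<h(\Sigma)-\eps$, the needed injections exist; that "$\ell$ large" may depend on the measure is absorbed by building $X'$ as an increasing union over an exhaustion of the standard Borel space of invariant probability measures, enlarging the sets of encoded blocks and tuning the scales $N_n$ coherently. \emph{Specification}: mixing of $\Sigma$ supplies, for any two legal words, a legal word of bounded length joining them, which legalizes the concatenations defining $c_{n+1}$ and lets the fillers and markers be designed so that the block boundaries at every level are recognizable from the $\Sigma$-sequence alone.

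\emph{Assembly and verification.} Letting $n\to\infty$ yields a Borel $\psi:X'\to\Sigma$ with $X'$ co-null for every $\mu\in\Pena(S)$, intertwining $S$ with $\sigma$. From a point of the image, the recognizable multi-scale marker structure recovers, for every $n$ and every level-$n$ block, its $\Sigma$-content, hence via $c_n^{-1}$ the source content of that block; letting $n\to\infty$ recovers all coordinates of $x$, so $\psi$ is injective with Borel inverse and $\psi(X')$ is Borel. Undoing the reductions --- composing with the recoding and with the inclusion of the chosen subshift into the original $\Sigma$, and disjointifying the countably many pieces by appending distinguishing tag words produced by mixing --- finishes the proof. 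The delicate point throughout, and the reason the theorem had remained open, is to perform the block code \emph{simultaneously for all aperiodic ergodic measures} while keeping the maps Borel and invertible: this is precisely where the strict inequality in $(*)$ is used, and where the exhaustion of the measure space and the tuning of the marker scales $N_n$ carry the load.
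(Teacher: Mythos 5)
Your outline captures several genuine ingredients of the actual argument --- the reduction to a uniform entropy gap by cutting $X$ into entropy slabs, the replacement of the countable-state Markov shift $\Sigma$ by a mixing SFT inside it, the marker-based nested tower structure, the counting via Shannon--McMillan--Breiman, and the use of mixing/specification to legalize concatenations. But there is a genuine gap at the heart of the construction, and it is exactly the point that separates Hochman's theorem from the classical Krieger theorem.

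You propose a block code $c_n$ sending source blocks of length $\ell$ to $\Sigma$-words of the same length, and you argue the counting works because "the number of source blocks to be encoded is at most $e^{\ell(h^*+\eps)}$." But over the alphabet $\NN$ (which you introduce via the countable generator) there are infinitely many words of each length, so a \emph{single} Borel injection from source blocks into $\Sigma$-words of the same length does not exist. The SMB bound $e^{\ell(h^*+\eps)}$ only applies to blocks that are \emph{typical for a fixed ergodic measure $\mu$}, and which blocks those are depends on $\mu$. So the code must be measure-dependent: effectively $c_n(w,M_x)$, where $M_x$ is the empirical ergodic component of $x$. You acknowledge this ("that '$\ell$ large' may depend on the measure...") but then, in the verification, you invert "via $c_n^{-1}$" to recover the source content and conclude that $\psi$ is injective. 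This step does not follow: reading a point of the image, one does not know which $M_x$ was used to encode, and without that information $c_n$ cannot be inverted. The circularity (to decode one needs the measure, but the measure is what you are trying to recover from $x$) is precisely what the paper's proof is organized to break.

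The paper's resolution is to build \emph{two} maps: an equivariant $\psi:X\to\Sigma_*$ (your multi-scale block code, which is injective only \emph{given} $M_x$) together with an invariant Borel map $\phi:X\to\{0,1\}^\NN$ encoding the ergodic component $M_x$ (Proposition \ref{p.MeasureCoding}), so that $\phi\times\psi$ is injective (Theorem \ref{t.BorelKrieger}). Then a separate and non-trivial step (Theorem \ref{t.restricted}) splices the invariant data $\phi(x)$ into the equivariant sequence by constructing a Borel set $B_*$ of prescribed $M_x$-measure $t(x)\cdot M_x(B_1)$ (Lemma \ref{l.BorelBx}), and placing an extra symbol at the visits to $B_*$ so that the asymptotic frequency of that symbol recovers $t(x)$, hence $M_x$, from the symbolic sequence alone. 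Only then is the single map $\Psi$ injective. Your proposal has no analogue of $\phi$, of the real-number encoding $t(x)$, or of the measure-calibrated set $B_*$; these are not decorations but the mechanism that turns a family of measure-by-measure Krieger embeddings into one coherent Borel embedding. Without them the "recognizable multi-scale marker structure" lets you recover $\Sigma$-contents of blocks, but not the source blocks.
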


We discuss this striking result in Sec. \ref{sec.almostBorel}. This is a Borel version of the following Krieger's generator theorem \cite{Krieger,Krieger2}\footnote{There are many versions of this classical theorem. Another one, perhaps closer to Hochman's theorem, can be found in \cite[chap. 31]{DGS}: given a mixing SFT $\Sigma$, if $(S,\mu)$ is an aperiodic measure-preserving system and if for almost all ergodic components $\nu$ of $\mu$, $h(S,\nu)<h_\topo(\Sigma)$, then there is a measure-preserving embedding of $S$ into $\Sigma$ whose image is topologically minimal and uniquely ergodic. It is also interesting to compare with \cite{BeguinCrovisierLeRoux} which considers homeomorphisms of manifolds, not Cantor spaces.}:

\begin{theorem}[Krieger's generator theorem]\label{t.Krieger}
Let $(\Sigma,\sigma)$ be a mixing SFT with entropy $h_\topo(\Sigma)$. Let  $(S,\mu)$ be an ergodic system which is not reduced to a periodic orbit.
If $h(S,\mu)<h(\Sigma)$ then there is a measurable embedding of a full measure subset of $(S,\mu)$ into $\Sigma$.
\end{theorem}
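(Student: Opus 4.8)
The plan is to construct the embedding by a Rokhlin-tower coding, exactly as in the proof of Krieger's finite generator theorem (which is the case where $\Sigma$ is a full shift), the only new ingredient being a coding lemma inside the mixing SFT $\Sigma$. First I would reduce to a symbolic model: since $(S,\mu)$ is ergodic and not a periodic orbit it is aperiodic, and $h(S,\mu)<h_\topo(\Sigma)<\infty$, so by the finite generator theorem $(S,\mu)$ is measurably isomorphic, modulo null sets, to a shift-invariant ergodic measure $\nu$ on a subshift over a finite alphabet $\mathcal P$; thus it suffices to embed $(\mathcal P^\ZZ,\mathrm{shift},\nu)$ into $\Sigma$. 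Fix $\eps>0$ with $h(\nu)+3\eps<h_\topo(\Sigma)$. I will use two facts. (i) Shannon--McMillan--Breiman: for every large $n$ there is a set $\mathcal T_n$ of $\mathcal P$-words of length $n$ with $\#\mathcal T_n\le e^{(h(\nu)+\eps)n}$ and $\nu\{x:x_0\cdots x_{n-1}\in\mathcal T_n\}\ge 1-\delta_n$, where $\delta_n\to0$. (ii) A coding lemma for $\Sigma$: there are a fixed ``marker'' word $M$, admissible in $\Sigma$, and for each large $n$ a family $\mathcal W_n$ of admissible $\Sigma$-words of length $n$, with $\#\mathcal W_n\ge e^{(h_\topo(\Sigma)-\eps)n}>2\,\#\mathcal T_{n}$, such that every concatenation $\cdots Mw_1Mw_2\cdots$ with $w_i\in\mathcal W_{n_i}$ is admissible in $\Sigma$ and contains $M$ only in the displayed positions (a ``comma-free'' property).

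Granting these, here is the coding. By the $\{L,L+1\}$-tower form of the Rokhlin lemma (Alpern's lemma), partition $\mathcal P^\ZZ$ modulo null sets into Rokhlin columns of heights $L$ and $L+1$; this cuts $\nu$-a.e.\ orbit, in a shift-equivariant way, into consecutive blocks of length $L$ or $L+1$, and by the ergodic theorem the density of blocks whose $\mathcal P$-content is not typical is at most some $\delta'_L\to0$. Define $\psi_L$ by replacing each block of length $\ell\in\{L,L+1\}$ by the length-$\ell$ word $M\,w$, where $w\in\mathcal W_{\ell-|M|}$ is a fixed injective encoding of (the length $\ell$ together with) the content of the block if that content is typical, and $w$ is a fixed ``junk'' word of $\mathcal W_{\ell-|M|}$ otherwise; this is possible since $\#\mathcal W_{\ell-|M|}>2\,\#\mathcal T_\ell$ for $L$ large. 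Fact (ii) makes $\psi_L(x)$ a genuine point of $\Sigma$, and since the block code is length-preserving and the block decomposition of $Sx$ is that of $x$ translated by one, one checks $\psi_L\circ S=\sigma\circ\psi_L$ $\nu$-a.e.\ --- this is exactly where length preservation is essential. Moreover $M$ occurs in $\psi_L(x)$ precisely at block boundaries, so from $\psi_L(x)$ one recovers the block decomposition and then the content of every typical block; hence $\psi_L$ is injective on the (positive, but not full, measure) set of $x$ all of whose blocks are typical.

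To upgrade this to a genuine almost-everywhere embedding I would run the construction along a sequence $L_1<L_2<\cdots$ with $\sum_k\delta'_{L_k}<\infty$, building the $(L_{k+1})$-tower to refine the $(L_k)$-structure, so that by Borel--Cantelli $\nu$-a.e.\ $x$ has only finitely many blocks that are ever atypical; these finitely many ``defects'' can be recorded once and for all inside a bounded modification of the code, yielding a Borel, shift-equivariant map $\psi$ defined and injective on a $\nu$-conull set $X'$. This nested-refinement step is carried out exactly as in the classical proof of the finite generator theorem. Transporting back through the isomorphism of the first paragraph, the restriction of $\psi$ to (the preimage of) $X'$ is the desired embedding of a full-measure subset of $(S,\mu)$ into $\Sigma$.

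The measure-theoretic scaffolding above --- Rokhlin/Alpern towers, Shannon--McMillan--Breiman, the ergodic theorem, the nested refinement --- is classical, and I expect the only real obstacle to be the coding lemma (ii): one must extract from the mixing SFT $\Sigma$, at every large length $n$, an exponentially large family of codewords that is at once self-punctuating and freely concatenable once flanked by the fixed marker $M$ --- in effect a comma-free code of rate arbitrarily close to $h_\topo(\Sigma)$ living inside $\Sigma$ --- using only the mixing property (to glue blocks through $M$) together with the transfer-matrix estimate $\#\{$admissible $n$-words$\}=e^{h_\topo(\Sigma)n+o(n)}$ (to count), and doing so without ever stretching lengths, so that block coding commutes exactly with the shift. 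That combinatorial construction is the crux of the argument.
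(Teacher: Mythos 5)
You are reconstructing Krieger's theorem, which the paper \emph{cites} (to \cite{Krieger,Krieger2} and, for a close variant, to \cite[Thm.~4.2.3]{Downarowicz}) rather than proves. The closest internal point of comparison is the Borel strengthening, Theorem~\ref{t.BorelKrieger}, whose proof has the same architecture you propose: Shannon--McMillan--Breiman as the counting input, Rokhlin towers (in Borel form, Proposition~\ref{p.GWtower}) to produce an equivariant block structure, and a marker-based injection into an SFT (Lemma~\ref{l.marker} is essentially your coding lemma~(ii); it is a standard and correct fact about mixing SFTs, so the ``crux'' you single out is not a gap). The reduction to a subshift via the finite generator theorem is a legitimate shortcut, since that theorem is logically independent of the embedding statement being proved here.

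The genuine gap is in the refinement step. The sentence ``by Borel--Cantelli $\nu$-a.e.\ $x$ has only finitely many blocks that are ever atypical'' is false as stated: at each stage $k$ the blocks tile all of $\mathbb Z$, and although the \emph{density} of atypical level-$k$ blocks goes to $0$, for a.e.\ $x$ there are infinitely many atypical blocks scattered across $\mathbb Z$ over all stages. What Borel--Cantelli actually gives is that, for each fixed position $n$, the level-$k$ block containing $n$ is typical for all large $k$. Consequently ``recording the finitely many defects once and for all in a bounded modification'' is not a mechanism: an atypical block of length $L$ carries up to $L\log\#\mathcal P$ bits of content that your junk-word substitution discards, and there is no bounded correction that recovers it. The correct mechanism, which is exactly what the proof of Theorem~\ref{t.BorelKrieger} implements, is \emph{hierarchical conditional coding}: at level $k+1$ one reserves, inside each level-$(k+1)$ block, a fraction of positions (the ``$(k+1)$-filling'' positions) whose cardinality is controlled by a budget $g_{k+1}$ with $\sum_k g_k<1$, and one uses them to encode the \emph{conditional} information $P_{k+1}\mid P_k$ for the whole block; the $P_k$-name of any given coordinate is then eventually determined once $k$ is large enough for that coordinate. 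This requires a summable sequence of entropy slacks $\eps_k$ (a single $\eps$ does not suffice for infinitely many stages) and a nested sequence of Rokhlin bases $B_1\supset B_2\supset\cdots$ so that the block partitions refine coherently. With that replacement your outline becomes a correct proof; without it, injectivity on a full-measure set does not follow.
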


This theorem of Krieger already implied that a mixing Markov shift contains any aperiodic and ergodic system of smaller entropy. A key difference is that in Hochman's theorem both the system to be embedded and the target system are of the same (Borel) nature. This allows a Cantor-Bernstein principle (see Lemma \ref{l.DCB}): mutual almost Borel embeddings imply isomorphism in the following sense.

\begin{definition}
Two Borel systems $(X,S)$, $(Y,T)$ are \emph{almost-Borel isomorphic} if there exists a Borel isomorphism $\psi:X'\to Y'$ such that:
 \begin{enumerate}
 \item $X',Y'$ are Borel; $\mu(X')=\nu(Y')=1$ for any $\mu\in\Pena(S)$, $\nu\in\Pena(T)$;
 \item $\psi\circ S=T\circ\psi$ over $X'$. 
 \end{enumerate}
Sets satifying the property (1)  are said to be \emph{almost all} of the Borel system.
\end{definition}

From these results, Hochman obtained a classification of mixing Markov shifts by their entropy and their possession (or not) of a \mme\ He then easily deduced that many natural systems are isomorphic to Markov shifts.

In Sec.\ \ref{sec.almostBorel}, we will discuss the interpretation of Theorem \ref{thm.Hochman1} in terms of universality, how it has been applied by Hochman to systems containing "enough" embedded mixing SFTs and finally, compare almost Borel isomorphism with related notions.

\subsection{A proof}

The main part of these lectures is devoted to a self-contained proof of Hoch\-man's result. We give all necessary definitions and background and rely only on basic results like the Kuratowski theorem from descriptive set theory or the Shannon-McMillan-Breiman theorem from ergodic theory. 
We essentially follow Hochman's ideas, with only minor technical simplifications or variations (e.g., we do not use B. Weiss countable generator theorem or the finitary coding, and we use a Borel construction of a Borel subset of given measure).

The first step of Hochman's proof establishes a Borel version of Krieger's Theorem \ref{t.Krieger} with embedding into some special mixing SFT and coding of the measure by a binary sequence. 

\begin{theorem}[See Theorem \ref{t.BorelKrieger}]
For any  Borel system $(X,S)$ and $\eps>0$, there are a mixing SFT $(\Sigma,\sigma)$ with $h(X)<h_\topo(\Sigma)<h(X)+\eps$ and Borel maps $\phi:X'\to\{0,1\}^{\NN}$ and $\psi:X'\to\Sigma$ with $X'$ almost all of $X$ such that: $\phi\circ S=\phi$, $\psi\circ S=\sigma\circ\psi$, and $(\phi\times\psi):X'\to \{0,1\}^{\NN}\times\Sigma$ is injective.
\end{theorem}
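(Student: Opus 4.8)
The plan is to run Krieger's classical generator argument, but uniformly over all aperiodic ergodic measures and inside the Borel category, letting the measure-coding factor $\phi$ carry the ergodic component so that $\psi$ need only be injective \emph{within} each component. Since only measures in $\Pena(S)$ must be respected, one first deletes every periodic orbit and assumes $S$ aperiodic on $X$ and $h(X)<\infty$. Fix once and for all a mixing SFT $(\Sigma,\sigma)$ with $h(X)<h_\topo(\Sigma)<h(X)+\eps$. By the Borel ergodic decomposition there is an $S$-invariant Borel map $e:X_0\to\Pena(S)$, with $X_0$ Borel and $\mu(X_0)=1$ for all $\mu\in\Pena(S)$, sending $x$ to its ergodic component; since $\mu\mapsto h(S,\mu)$ is Borel on the standard Borel space $\Pena(S)$, composing $e$ with a Borel injection $\Pena(S)\hookrightarrow\{0,1\}^\NN$ (Kuratowski) gives a Borel, $S$-invariant map $\phi:X_0\to\{0,1\}^\NN$ --- the coding of the measure. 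Choose $0=c_0<c_1<c_2<\cdots$ with $c_j\uparrow h(X)$ and $c_{j+1}-c_j$ small, and set $X_j=\{x\in X_0:\ c_{j-1}\le h(S,e(x))<c_j\}$ for $j\ge1$ and $X_\infty=\{x\in X_0:\ h(S,e(x))=h(X)\}$. These are disjoint, $S$-invariant, Borel, cover $X_0$, and on each of them every charging ergodic measure has entropy bounded by a fixed constant strictly below $h_\topo(\Sigma)$ (by $c_j$, resp.\ by $h(X)$); this uniform slack within a band is the reason for the decomposition.

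Next comes the heart of the matter, a Borel form of Theorem~\ref{t.Krieger} band by band. Reserve pairwise disjoint "regions" $R_j\subseteq\Sigma$, $j\in\{1,2,\dots\}\cup\{\infty\}$, each of which --- by the mixing of $\Sigma$ and a routine marker-and-concatenation construction at negligible entropy cost --- supports a mixing SFT whose entropy exceeds that band's bound yet stays below $h_\topo(\Sigma)$; the fixed gap $h_\topo(\Sigma)-h(X)>0$ furnishes the budget. On each band I would then build a Borel map $\psi_j:X_j'\to R_j\subseteq\Sigma$ with $\psi_j\circ S=\sigma\circ\psi_j$, with $X_j'\subseteq X_j$ Borel and conull for every $\mu\in\Pena(S)$, such that $(e,\psi_j)$ is injective on $X_j'$ --- equivalently, a finite Borel partition that generates for every ergodic measure of the band simultaneously. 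The engine is the Borel marker (Rokhlin) lemma, which supplies Borel Kakutani--Rokhlin towers whose base has measure $\le 2^{-m}$ for \emph{every} aperiodic measure once the height is large enough. One runs Krieger's inductive refinement (compare \cite{DGS}) inside successively taller such towers, spending the uniform entropy slack to keep the alphabet finite while, at stage $m$, keeping the "not-yet-separated" set $E_m$ of measure $\le 2^{-m}$ \emph{uniformly in $\mu$}; since then $\sum_m\mu(E_m)<\infty$ uniformly, the Borel set $X_j':=X_j\setminus\bigcap_n\bigcup_{m\ge n}E_m$ (shrunk if necessary to be $S$-invariant) is conull for all $\mu$ and carries an injective-per-component limiting itinerary map, which a final recoding arranges to land inside $R_j$.

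It remains to assemble. Put $X'=\bigcup_j X_j'$ and let $\psi=\psi_j$ on $X_j'$; as the $X_j'$ are disjoint Borel $S$-invariant sets, $\psi$ is a well-defined Borel map with $\psi\circ S=\sigma\circ\psi$ and $\psi(X')\subseteq\Sigma$, and $X'$ is Borel with $\mu(X')=1$ for every $\mu\in\Pena(S)$. If $\phi(x)=\phi(y)$ then $e(x)=e(y)=:\mu\in\Pena(S)$, so $x$ and $y$ lie in the same band $X_j$ and the same ergodic component; then $\psi(x)=\psi(y)$ gives $\psi_j(x)=\psi_j(y)$, and injectivity of $(e,\psi_j)$ on $X_j'$ forces $x=y$. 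Thus $\phi\times\psi$ is injective on $X'$, and the entropy bound holds by the choice of $\Sigma$.

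The step I expect to be the real obstacle is the middle one: promoting Krieger's generator theorem from a single ergodic measure to a Borel family --- a single finite Borel partition generating for all ergodic measures of a band at once. What makes it possible is the Borel marker/Rokhlin lemma, whose tower-measure estimates are uniform in the aperiodic measure; the rest is the discipline of keeping every choice Borel and turning the uniform stage-by-stage error bounds into one Borel conull set via Borel--Cantelli.
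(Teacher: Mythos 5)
Your plan runs parallel to the paper's at the top level --- $\phi$ from the Borel empirical-measure map composed with a Kuratowski injection, and $\psi$ a Krieger-type itinerary that only needs to separate within each ergodic component --- but the middle step that you yourself flag as the obstacle is where it breaks. Note first that the banding by entropy is superfluous for the purpose you cite: once $\Sigma$ is fixed with $h_\topo(\Sigma)>h(X)$, the gap $h_\topo(\Sigma)-h(S,\mu)\ge h_\topo(\Sigma)-h(X)>0$ is already uniform over all $\mu$, and the paper simply works under the global hypothesis $\log s/T>h(X)$. What is genuinely \emph{not} uniform across measures is the rate of Shannon--McMillan--Breiman convergence, and entropy bands do nothing to control that.

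This is the real gap. You assert that the "not-yet-separated" set $E_m$ can be kept of measure $\le 2^{-m}$ uniformly in $\mu$ and credit the Rokhlin lemma. But Rokhlin only controls the mass \emph{outside} the tower uniformly; the bad set \emph{inside} --- points whose $P^n$-name has the wrong exponential measure, so the block encoding overflows --- is governed by SMB, and the stage $N(\mu)$ after which SMB holds within $\eps$ is not bounded over the measures of a band. With a fixed sequence of tower heights you cannot get $\mu(E_m)\le 2^{-m}$ for all $\mu$, and there is no Borel--Cantelli to run. The paper's remedy is to make the scales \emph{measure-adapted}: at each level $k$ it introduces an invariant Borel function $N_k(x)$ (roughly the first $n$ with $M_x(G_{P_k}(\eps_k,n))$ close to $1$), splits $X$ into the countably many invariant pieces where $N_k$ is constant, and only then builds the cut set $B_k$ and the level-$k$ intervals. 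A related point you skip: a single finite partition need not be generating for all measures of a band, so "a finite Borel partition that generates for every ergodic measure of the band simultaneously" is too strong. The paper never produces one; instead it fixes a refining generating \emph{sequence} $P_1\preceq P_2\preceq\cdots$ and codes it \emph{conditionally} --- level $k$ of a hierarchical coding tied to nested completely positive marker sets $B_1\supset B_2\supset\cdots$ records only the new information in $P_k$ given $P_{k-1}$, with the budget $\log s/T - h(X)$ distributed across levels through the $\eps_k$ so that $\sum_k g_k(x)<1$. Both the measure-adapted scales and the conditional multi-level coding are absent from your outline, so "Krieger's refinement plus uniform errors plus Borel bookkeeping" understates what is actually required.
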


The second step builds another equivariant Borel map $\Psi$ into  another SFT such that $\Psi(x)$ determines both $\phi(x)$ and $\psi(x)$, hence is injective. This $\Psi(x)$ is built by "splicing" into $\psi(x)$ an equivariant version of $\phi(x)$ (obtained by considering the times of visit to a well-chosen set).

\begin{theorem}[See Theorem \ref{t.restricted}]
For any  Borel system $(X,S)$ and $\eps>0$, there are a mixing SFT $\tilde\Sigma$ with $h(X)<h_\topo(\tilde\Sigma)<h(X)+\eps$ and an almost Borel embedding $\Psi:X\to\tilde\Sigma$.
\end{theorem}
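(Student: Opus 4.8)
\emph{Strategy.} The plan is to weld the two Borel maps furnished by Theorem~\ref{t.BorelKrieger} into a single \emph{injective} equivariant map, at a negligible cost in entropy. Apply that theorem with $\eps/2$ in place of $\eps$: this produces a mixing SFT $\Sigma_0$ with $h(X)<h_\topo(\Sigma_0)<h(X)+\eps/2$, together with Borel maps $\phi:X'\to\{0,1\}^\NN$ and $\psi:X'\to\Sigma_0$ defined on an almost all set $X'$, with $\phi\circ S=\phi$, $\psi\circ S=\sigma\circ\psi$, and $\phi\times\psi$ injective. Since $\psi$ is already equivariant, the entire difficulty is to record the $S$-invariant datum $\phi(x)$ inside an equivariant sequence; the map $\Psi(x)$ will be $\psi(x)$ together with, on a very thin set of coordinates, a \emph{redundant} transcription of $\phi(x)$.

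\emph{The combinatorial skeleton.} Each $\mu\in\Pena(S)$ is aperiodic, hence carried by the aperiodic part of $X$; shrinking $X'$ to a Borel $S$-invariant set of full $\mu$-measure for every $\mu\in\Pena(S)$ on which all the Borel sets introduced below are visited infinitely often in both time directions, I may assume $S$ is aperiodic on $X'$. Using a Borel marker lemma, I choose Borel complete sections: $A_0$, with consecutive visits at least $\ell_0$ apart (so $\mu(A_0)\le 1/\ell_0$ for all $\mu$, by disjointness of $A_0,SA_0,\dots,S^{\ell_0-1}A_0$), and $A_1\subseteq A_0$, whose return time inside the induced system $(A_0,S_{A_0})$ is $\mu$-essentially unbounded for every $\mu\in\Pena(S)$. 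The orbit of a typical $x$ is then partitioned into ``$A_1$-excursions'' (maximal stretches between consecutive visits to $A_1$), and by the choice of $A_1$ these excursions contain, along the orbit, an unbounded number of visits to $A_0$. For an $A_0$-visit $n$ of $x$, let $c(n,x)\ge 0$ be the number of $A_0$-visits strictly before $n$ lying in the same $A_1$-excursion.

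\emph{The target SFT and the map.} Let $\tilde\Sigma$ be the subshift on the alphabet $\mathcal A_{\Sigma_0}\sqcup\bigl(\mathcal A_{\Sigma_0}\times\{0,1\}\times\{0,1\}\bigr)$ (with $\mathcal A_{\Sigma_0}$ the alphabet of $\Sigma_0$; the second summand gives ``marked'' letters carrying two extra bits) defined by finitely many forbidden patterns: erasing the extra coordinates of marked letters must yield a legal $\Sigma_0$-word, and no two marked letters occur within distance $\ell_0$. Then $\tilde\Sigma$ is an SFT; it contains $\Sigma_0$; a sufficiently long unmarked $\Sigma_0$-word can be inserted between any two words (mixing of $\Sigma_0$), so $\tilde\Sigma$ is mixing; and counting length-$N$ words gives $h_\topo(\Sigma_0)\le h_\topo(\tilde\Sigma)\le h_\topo(\Sigma_0)+o_{\ell_0}(1)$, so that, choosing $\ell_0$ large, $h(X)<h_\topo(\tilde\Sigma)<h(X)+\eps$. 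Now define $\Psi:X'\to\tilde\Sigma$ by $\Psi(x)_n=\psi(x)_n$ if $S^nx\notin A_0$, and $\Psi(x)_n=\bigl(\psi(x)_n,\beta_n,\phi(x)_{c(n,x)}\bigr)$ if $S^nx\in A_0$, where $\beta_n=1$ when $S^nx\in A_1$ and $\beta_n=0$ otherwise. Equivariance of $A_0,A_1,\psi$ and $S$-invariance of $\phi$ make $\Psi$ equivariant; it is Borel, defined on an almost all set, and lands in $\tilde\Sigma$ by construction.

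\emph{Injectivity, and where the difficulty lies.} Given $\Psi(x)$, erasing the extra coordinates recovers $\psi(x)$; locating the marked coordinates and then those with $\beta=1$ recovers the visit times to $A_1$; and between two consecutive such, the bits carried by the marked coordinates, read in order, give $\phi(x)_0,\phi(x)_1,\dots$ up to the length (in number of $A_0$-visits) of that $A_1$-excursion. Since these lengths are unbounded along the orbit, the union of these prefixes is all of $\phi(x)$, so $(\phi\times\psi)(x)$, hence $x$, is determined; thus $\Psi$ is an almost Borel embedding of $X$ into the mixing SFT $\tilde\Sigma$. I expect the main obstacle to be the construction of the ``well-chosen set'' $A_1$: because the aperiodic ergodic measures preclude any Borel transversal, there is no canonical ``origin'' from which to enumerate the coordinates of $\phi$, which is exactly what forces the local, redundant transcription above; and one must arrange that sparsity (needed for the entropy estimate) and the essential unboundedness of the induced return time (needed so that the redundant copies eventually exhaust all of $\phi(x)$, and hence for injectivity) hold \emph{simultaneously for every} $\mu\in\Pena(S)$. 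Establishing the existence of such Borel sets --- where the marker/Rokhlin technology, and, for the uniform control over $\Pena(S)$, the Shannon--McMillan--Breiman theorem, come in --- is the crux; checking that $\tilde\Sigma$ is a mixing SFT with the stated entropy, and that the domain of $\Psi$ is almost all (via Poincaré recurrence, using that complete sections have positive measure for every $\mu\in\Pena(S)$), is then routine.
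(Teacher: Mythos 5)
Your strategy is a \emph{digital} transcription of $\phi(x)$: carve the orbit into $A_1$-excursions and, at each $A_0$-visit in an excursion, record one further bit of $\phi(x)$, restarting the count at each visit to $A_1$. This is close in spirit to what the paper describes (in its introductory remarks to the proof) as Hochman's original ``finitary coding'' route. The paper deliberately avoids this route and takes a genuinely different one. It keeps the target SFT $\Sigma_*'(s,T,N)$, obtained from $\Sigma_*(s,T,N)$ by merely allowing one extra bit (the symbol $\spa$ or $1$) in the coordinate immediately preceding each $\sepa$. Instead of a digital transcription, it codes $\phi(x)$ into a single real $t(x)\in[0,1]$, then uses Lemma~\ref{l.BorelBx} to produce, in a Borel way simultaneously over $t$ and $\mu$, a Borel set $B_*\subset B_1$ with $M_x(B_*)=t(x)\,M_x(B_1)$, and flips the extra bit to $1$ exactly at the visit times to $B_*$. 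Injectivity then comes for free from Birkhoff: the relative frequency of the pattern $1\sepa$ among the patterns $\sepa$ in $\psi'(x)$ equals $t(x)$, which recovers $\phi(x)$ and hence $M_x$ with no ``origin'' or restart mechanism at all. Each coordinate of $\Psi(x)$ is determined \emph{locally}; recovery of the invariant datum is purely asymptotic.

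There is a genuine gap in your version, and it lies exactly where you flag it but it is more serious than you suggest. You need a Borel $A_1\subset A_0$ such that, for \emph{every} $\mu\in\Pena(S)$, the return time to $A_1$ inside the induced system $(A_0,S_{A_0})$ is $\mu$-essentially unbounded (so that the excursion lengths are unbounded along a.e.\ orbit and the prefixes of $\phi(x)$ exhaust $\NN$). The tools you cite do not deliver this. Glasner--Weiss/Rokhlin towers (Proposition~\ref{p.GWtower}), applied to the induced system, give completely positive sets whose return time is bounded \emph{below} by a chosen constant, not essentially unbounded above; and naive unions or intersections of such bases fail the measure count (the $k$-slice removal costs $\approx k\cdot\mu(C_k)\approx 1$ for each height $k$, so summing over $k$ destroys positivity). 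The Shannon--McMillan--Breiman theorem, which you also invoke, is an entropy statement and is not relevant to producing $A_1$; its role in the paper is to control the \emph{coding rate} inside Theorem~\ref{t.BorelKrieger}, a step that is already done before the present theorem is reached. You would need a separate, non-trivial Borel construction (uniform over $\Pena(S)$) to obtain an $A_1$ of the required kind, and you have not supplied it. The paper's analog device sidesteps this obstruction entirely, which is precisely why Lemma~\ref{l.BorelBx} is introduced. The remaining parts of your argument (choice of $A_0$, $\tilde\Sigma$ being a mixing SFT, the $h_\topo(\tilde\Sigma)\le h_\topo(\Sigma_0)+O(H(1/\ell_0))$ estimate, equivariance and Borel-ness of $\Psi$) are sound.
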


To conclude, one embeds the image of $\Psi$ into the given mixing SFT (Lemma \ref{l.embedSigma}) using markers (Lemma \ref{l.marker}) and then removes  auxiliary assumptions which simplified the previous steps: the target Markov shift does not have to be an SFT, the entropy inequality (*) does not imply a uniform entropy gap.

\subsection{Application to smooth dynamics with mixing}
In the rest of these lectures, we consider $C^{1+}$-diffeomorphisms $T$ of compact manifolds, i.e., $C^1$-diffeo\-mor\-phisms whose differential $T'(x)$ is a H\"older-continuous function of $x$.\footnote{This smoothness assumption is required by the proof of Katok's theorem which relies on Pesin theory.}
We use the classical approximation result of A. Katok \cite{KatokIHES} (see Thm.\ \ref{t.KatokPeriod} and Sec.\ \ref{sec.back.hyp}): any ergodic, invariant probability measure which is hyperbolic can be approximated by horseshoes. In Sec.\ \ref{sec.Katok}, we recall Katok's theorem and relate the period of the horseshoe with the periods of the measure, a piece of information which is necessary for our purposes.

Perhaps unexpectedly, Hochman's theorem turns such approximations into isomorphisms. In fact, Katok's theorem provides the embedded mixing SFTs needed to follow Hochman's approach. This shows that diffeomorphisms with "enough"  hyperbolic measures (ie, without zero Lyapunov exponents) are almost Borel isomorphic to Markov shifts up to measures of maximal entropy.

Our first results assume some mixing.
Recall that a measure-preserving system $(T,\mu)$ is \emph{totally ergodic} if all its iterates are ergodic and it is \emph{$p$-Bernoulli} if it is isomorphic to the product of a Bernoulli system (see Sec. \ref{sec.back.shift}) and a circular permutation on $p$ elements. 

\begin{theorem}\label{t.mixdiff}
Let $T$ be a $C^{1+}$-diffeomorphism of  a compact manifold $M$. Assume that: {\rm(\#)} for any $h<h_\topo(T)$, there is a totally ergodic, hyperbolic invariant measure with entropy $>h$. 
Then, 
\begin{enumerate}
 \item $(M,T)$ is a disjoint union of Borel subsystems $M_1\sqcup M_2$ such that $T|M_1$ is almost Borel isomorphic to a mixing Markov shift and $M_2$ carries exactly the measures of maximal entropy of $T$;
 \item $(M,T)$ is almost Borel isomorphic to a Markov shift if and only if: {\rm(\S)} $T$ has at most countably many \mme\ (i.e., ergodic measures maximizing the entropy) and each one is $p$-Bernoulli for some $p\geq1$. 
\end{enumerate}
\end{theorem}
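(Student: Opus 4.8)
The plan is to deduce Theorem \ref{t.mixdiff} from Hochman's universality theorem (Theorem \ref{thm.Hochman1}) together with Katok's horseshoe theorem, proceeding in two stages corresponding to the two parts of the statement. For part (1), the strategy is a Cantor--Bernstein argument. First I would fix a mixing Markov shift $\Sigma$ with $h(\Sigma)=h_\topo(T)$ and no measure of maximal entropy; such a $\Sigma$ exists by a standard countable-state construction (a loop graph with carefully chosen loop lengths realizing any prescribed entropy without an \mme). I then set $M_2$ to be the Borel set of points whose ergodic averages converge to an \mme\ of $T$ (or, more precisely, the union of the supports of such measures, cut down to a Borel set carrying exactly those measures), and $M_1 := M \setminus M_2$. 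On $M_1$, every aperiodic ergodic invariant measure has entropy strictly below $h_\topo(T)$, so hypothesis $(*)$ of Theorem \ref{thm.Hochman1} holds and $T|M_1$ almost Borel embeds into $\Sigma$.

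For the reverse embedding $\Sigma \hookrightarrow T|M_1$, I would use condition {\rm(\#)} and Katok's theorem. Given any ergodic measure $\nu$ on $\Sigma$, it has some entropy $h(\sigma,\nu)<h(\Sigma)=h_\topo(T)$; by {\rm(\#)} there is a totally ergodic hyperbolic measure $\mu$ of $T$ with $h(T,\mu)>h(\sigma,\nu)$, and by Katok's theorem (Thm.\ \ref{t.KatokPeriod}) there is a horseshoe $K\subset M$, i.e.\ a compact invariant hyperbolic set carrying a mixing SFT $\Sigma_K$ (total ergodicity of $\mu$ lets us arrange the period to be $1$, so $\Sigma_K$ is genuinely mixing after the period analysis of Sec.\ \ref{sec.Katok}) with $h_\topo(\Sigma_K)>h(\sigma,\nu)$. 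Letting $h$ range over a sequence increasing to $h_\topo(T)$ produces horseshoes $\Sigma_{K_n}$ with $h_\topo(\Sigma_{K_n})\to h_\topo(T)$; one checks these horseshoes lie in $M_1$ (they have no \mme\ of $T$ as their entropy is strictly submaximal, and in any case one can discard the null set $M_2$). The disjoint union $\bigsqcup_n \Sigma_{K_n}$ is then an almost Borel subsystem of $T|M_1$ into which $\Sigma$ embeds, by Krieger/Hochman applied to $\Sigma$ relative to this growing family of embedded mixing SFTs --- concretely, partition the ergodic measures of $\Sigma$ by entropy and route each into a horseshoe of larger entropy. Mutual almost Borel embeddings plus the Cantor--Bernstein lemma (Lemma \ref{l.DCB}) give the almost Borel isomorphism $T|M_1 \cong \Sigma$.

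For part (2), the "only if" direction is immediate: a Markov shift has at most countably many ergodic \mme, and by Gurevic--Sarig theory each \mme\ of a Markov shift is $p$-Bernoulli (it is finitely supported on an irreducible component and the SMB/Ornstein machinery gives the Bernoulli-times-rotation structure); an almost Borel isomorphism transports the \mme\ of $T$ to those of the Markov shift, so {\rm(\S)} is necessary. For the "if" direction, assume {\rm(\S)}. Part (1) already handles $M_1 \cong \Sigma$ with $\Sigma$ a mixing (hence a) Markov shift. On $M_2$, by {\rm(\S)} there are at most countably many ergodic \mme\ $\mu_1, \mu_2, \ldots$, each $p_i$-Bernoulli; I would realize each $p_i$-Bernoulli system as a mixing Markov shift restricted to the appropriate measure --- a $p$-Bernoulli system of entropy $c$ embeds into, and in fact is almost Borel isomorphic to, a mixing SFT of entropy $c$ carrying a unique \mme\ (this is classical Krieger/Ornstein plus the period-$p$ bookkeeping, realizing the Bernoulli factor with the right entropy and gluing in the cyclic permutation). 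Assembling a countable disjoint union of Markov shifts --- and using that a disjoint union of Markov shifts is again a Markov shift (e.g.\ by interleaving the state graphs with a common recurrent base vertex, or simply taking a disjoint union of graphs, which is an acceptable Markov shift in this Borel setting) --- gives $M_2$ almost Borel isomorphic to a Markov shift, and hence $M = M_1 \sqcup M_2$ is as well.

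The main obstacle I anticipate is the period bookkeeping in the reverse embedding and in the treatment of $M_2$: one must be careful that the horseshoes produced by Katok's theorem are truly \emph{mixing} SFTs (not merely transitive ones with a nontrivial period), which is exactly why hypothesis {\rm(\#)} demands \emph{totally ergodic} measures rather than merely ergodic ones, and why Sec.\ \ref{sec.Katok} carefully tracks the relation between the period of the approximating horseshoe and the periods of the measure. A secondary subtlety is verifying that $M_1$ and $M_2$ can be chosen to be genuinely Borel and to carry \emph{exactly} the advertised invariant measures --- this requires a Borel selection of the ergodic decomposition and a Borel description of the set of generic points of \mme, but these are routine consequences of the descriptive set theory recalled in Sec.\ \ref{sec.back}. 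The universality input from Hochman's theorem does all the heavy lifting; the work here is in packaging Katok's theorem to supply the embedded mixing SFTs with the correct entropies and periods.
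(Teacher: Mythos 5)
Your proof of part (1) is essentially the paper's: you build mutual almost Borel embeddings between $T|M_1$ and a mixing Markov shift of entropy $h_\topo(T)$ with no \mme\ and close with Cantor--Bernstein, while the paper phrases this as showing $M_1$ is strictly universal for $\mathcal B(h_\topo(T))$ (via Lemma~\ref{l.SeqUnivSub}) and then invokes Corollary~\ref{c.uniqStUn}, which is Cantor--Bernstein in disguise. Both routes set $M_2 = (h\circ M)^{-1}(\{h_\topo(T)\})$ and both lean on Katok's theorem with the period upgrade of Sec.~\ref{sec.Katok} to supply mixing horseshoes of entropy arbitrarily close to $h_\topo(T)$. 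That part is fine.

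Part (2) has a genuine gap. You conclude by asserting that assembling the realizing Markov shifts for the individual \mme's gives \lq\lq$M_2$ almost Borel isomorphic to a Markov shift.\rq\rq\ This is false whenever $h_\topo(T)>0$: $M_2$ carries \emph{only} measures of entropy exactly $h_\topo(T)>0$, whereas any Markov shift with a positive-entropy measure also carries, via its finite irreducible subgraphs (cf.\ \eqref{e.MSentropy}), a continuum of aperiodic ergodic measures of every smaller positive entropy. So $M_2$ is not almost Borel isomorphic to any Markov shift in isolation, and the argument cannot proceed component-by-component. The paper's proof handles $M = M_1 \sqcup M_2$ globally: it writes the candidate Markov shift as $\Sigma = \Sigma_0 \sqcup \bigsqcup_{p\geq1} \Sigma_p \times (E_p,\id)$ with $\Sigma_p$ positive recurrent of period $p$ and entropy $h_\topo(T)$, and exploits the \emph{strict universality} of $M_1 \equiv \Sigma_0$ to absorb all the submaximal-entropy junk: $\Sigma_0 \sqcup (\Sigma_p\setminus\hat\Sigma_p)\times E_p \equiv \Sigma_0$ for each $p$ (and for the countable union, via Ex.~\ref{exo.countableunion}), while Ornstein's theorem identifies $\hat\Sigma_p\times E_p$ with the piece of $M_2$ carrying the $p$-Bernoulli \mme's. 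That interplay between $M_1$ and the padding needed to wrap $M_2$'s measures in a Markov shift is the missing idea. There is also a smaller but telling slip: for $p>1$ the realizing Markov shift \emph{cannot} be mixing --- it must have period $p$ to carry a $p$-Bernoulli \mme\ --- so \lq\lq is almost Borel isomorphic to a mixing SFT of entropy $c$ carrying a unique \mme\rq\rq\ fails on two counts: the period is wrong, and a single aperiodic ergodic measure-preserving system is never almost Borel isomorphic to an entire positive-entropy SFT, which carries uncountably many ergodic measures.
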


For surface diffeomorphisms, results of Sarig \cite{Sarig} and Berger \cite{Berger} imply the condition (\S) in point (2) of the above theorem and we obtain:

\begin{corollary}\label{c.sarig}
Let $T$ be a $C^{1+}$-diffeomorphism of a compact surface with positive entropy and a totally ergodic \mme\ Then $T$ is almost Borel isomorphic to a Markov shift. 

Moreover, such diffeomorphisms are classified up to almost Borel isomorphism by the following data: their topological entropy and the (possibly zero or infinite) number of their \mme's that are $p$-Bernoulli for each $p\geq1$. 
\end{corollary}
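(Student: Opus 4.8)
The plan is to check the two hypotheses of Theorem~\ref{t.mixdiff}, apply it, and then match the resulting pieces, invoking Hochman's classification of mixing Markov shifts on the sub-maximal part and Ornstein theory on the \mme's. On a compact surface, Ruelle's inequality applied to $T$ and to $T^{-1}$ forces every ergodic measure of positive entropy to have one positive and one negative Lyapunov exponent, hence to be hyperbolic; since $h_\topo(T)>0$, the given totally ergodic \mme\ $\mu_0$ is totally ergodic, hyperbolic, and of entropy $h_\topo(T)>h$ for every $h<h_\topo(T)$, so $(\#)$ holds with $\mu_0$ alone (no appeal to Katok's theorem is needed here). For $(\S)$, Sarig's symbolic dynamics \cite{Sarig} provides a H\"older, finite-to-one coding, by a countable Markov shift, of the hyperbolic part of $T$, which carries every positive-entropy measure; each ergodic \mme\ of $T$ then lifts to an \mme\ of an irreducible component of that shift with Gurevich entropy $h_\topo(T)$, there are at most countably many such components, each carrying at most one \mme, and by the results of Sarig \cite{Sarig} and Berger \cite{Berger} this measure --- hence its image, the original \mme\ of $T$ --- is $p$-Bernoulli, with $p$ the period of the component. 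This is precisely condition $(\S)$.

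With $(\#)$ in hand, Theorem~\ref{t.mixdiff}(1) decomposes $(M,T)=M_1\sqcup M_2$ with $T|M_1$ almost Borel isomorphic to a mixing Markov shift and $M_2$ carrying exactly the \mme's of $T$; and since we have verified $(\S)$, Theorem~\ref{t.mixdiff}(2) gives that $(M,T)$ is almost Borel isomorphic to a Markov shift, which is the first assertion. For the classification, the two invariants are plainly preserved by almost Borel isomorphism: it induces a bijection $\Pena(T_1)\to\Pena(T_2)$ preserving Kolmogorov--Sinai entropy and measure-theoretic isomorphism type, and, because positive-entropy ergodic measures are aperiodic and approach $h_\topo$ by the variational principle, $h_\topo(T_i)=\max\{h(T_i,\mu):\mu\in\Pena(T_i)\}$, attained at any \mme; hence topological entropy is preserved, and so is the number, for each $p$, of $p$-Bernoulli elements of $\Pena(T_i)$ of maximal entropy.

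Conversely, let $T_1,T_2$ be as in the statement with $h:=h_\topo(T_1)=h_\topo(T_2)$ and with equal numbers $n_p\in\{0,1,2,\dots\}\cup\{\infty\}$ of $p$-Bernoulli \mme's for every $p\geq1$, and decompose $(M_i,T_i)=M_1^{(i)}\sqcup M_2^{(i)}$ as above. The mixing Markov shift almost Borel isomorphic to $T_i|M_1^{(i)}$ has entropy $h$ --- the horseshoes supplied by Katok's theorem are disjoint from the \mme's, hence carried by $M_1^{(i)}$, and their entropies approach $h_\topo(T_i)$ --- and it has no \mme, since by the same entropy computation such a measure would be an \mme\ of $T_i$, hence carried by $M_2^{(i)}$, contradicting the disjointness; thus Hochman's classification of mixing Markov shifts makes the two sub-maximal pieces almost Borel isomorphic. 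On the other side, the \mme's of $T_i$ being countably many and mutually singular, $M_2^{(i)}$ is, modulo a null set for each of them, a countable disjoint union of $T_i$-invariant Borel pieces each carrying a single \mme; the equality of the $n_p$'s sets up a bijection between the \mme's of $T_1$ and those of $T_2$ respecting $p$, and Ornstein's theorem (uniqueness of the $p$-Bernoulli system of entropy $h$) makes the matched pieces almost Borel isomorphic. Taking the disjoint union of all these isomorphisms shows $T_1$ and $T_2$ are almost Borel isomorphic.

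The genuine difficulty here is external and already done: it lies in the cited results of Sarig and Berger, which together yield condition $(\S)$; once these are granted, the corollary is a formal consequence of Theorem~\ref{t.mixdiff}, of Hochman's classification of mixing Markov shifts, and of Ornstein theory. Within the argument itself the only mildly delicate points are the absence of an \mme\ on the mixing part $M_1^{(i)}$ and the splitting of $M_2^{(i)}$ into one-measure subsystems.
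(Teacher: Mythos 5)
Your proof is correct and takes essentially the same route as the paper: Ruelle's inequality to get hyperbolicity of the totally ergodic \mme, Sarig's results to verify $(\S)$, Theorem~\ref{t.mixdiff} to get the isomorphism with a Markov shift, and then matching the sub-maximal and \mme\ pieces via Hochman's classification and Ornstein theory. The paper's own proof is much terser on the classification: it simply observes that the stated data determine the Markov shift $\Sigma = \Sigma_0 \sqcup \bigcup_p \Sigma_p\times E_p$ constructed in the proof of Theorem~\ref{t.mixdiff}(2), whereas you re-derive the matching of pieces from scratch; your version is longer but more self-contained and has the advantage of making explicit why the mixing part carries no \mme\ and why the \mme\ pieces can be matched. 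One small remark on citations: the paper's proof credits Sarig's second paper \cite{Sarig2} (Bernoulli equilibrium states) for the countability and periodic-Bernoulli property of the \mme's; Berger's work concerns H\'enon-like maps and is really only needed for Corollary~\ref{c.berger}, so invoking it here is superfluous (though the paper's own introduction lumps the references together in the same way).
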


\begin{corollary}\label{c.berger}
Consider H\'enon-like maps $H_{a,b}$  where $(a,b)$ belongs to a good set of parameters (see \cite{Berger} for precise definitions). Each such map is almost Borel isomorphic to any positive recurrent mixing Markov shift with entropy $h(H_{a,b})$. Moreover, these maps are classified up to almost Borel isomorphism by their entropy. 
\end{corollary}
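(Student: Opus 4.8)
The plan is to deduce Corollary~\ref{c.berger} from Theorem~\ref{t.mixdiff} and Corollary~\ref{c.sarig}, using Berger's results on the dynamics of Hénon-like maps $H_{a,b}$ for good parameters. First I would invoke Berger's work to establish that each such $H_{a,b}$ is a $C^{1+}$ surface diffeomorphism with positive topological entropy which admits a unique measure of maximal entropy, that this \mme\ is totally ergodic (indeed mixing, even Bernoulli up to a period, by the Sarig--Berger analysis), and that the entropy condition (\#) of Theorem~\ref{t.mixdiff} holds — i.e., there are totally ergodic hyperbolic measures of entropy arbitrarily close to $h_\topo(H_{a,b})$. The hyperbolicity and the approximation by horseshoes come from Pesin theory together with Katok's theorem, exactly as in Section~\ref{sec.Katok}; the point is that Berger's construction produces a nonuniformly hyperbolic map whose topological entropy is approached by hyperbolic (horseshoe-supported) measures.

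Granting this, Corollary~\ref{c.sarig} applies directly: $H_{a,b}$ is almost Borel isomorphic to a Markov shift. To pin down \emph{which} Markov shift, I would argue that the unique \mme\ being $p$-Bernoulli forces, via the classification half of Corollary~\ref{c.sarig} (equivalently point (2) of Theorem~\ref{t.mixdiff} with condition (\S) satisfied by a single Bernoulli \mme), that $H_{a,b}$ is almost Borel isomorphic to a mixing Markov shift $\Sigma$ with $h_\topo(\Sigma)=h(H_{a,b})$ and with a (Bernoulli, hence) maximal measure — and any positive recurrent mixing Markov shift of the same entropy has such a \mme\ which is Bernoulli up to period. By Hochman's classification of mixing Markov shifts (entropy plus possession of an \mme, with the further data of the Bernoulli periods of the \mme's), all positive recurrent mixing Markov shifts of entropy $h(H_{a,b})$ whose \mme\ is $1$-Bernoulli are mutually almost Borel isomorphic; since Berger's \mme\ is $1$-Bernoulli (genuinely mixing), $H_{a,b}$ is almost Borel isomorphic to each of them. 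The classification statement for the family $\{H_{a,b}\}$ then follows: two such maps are almost Borel isomorphic iff they have the same entropy, since the \mme-data is always "one $1$-Bernoulli \mme", a constant across the family.

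The main obstacle I expect is not in the almost-Borel machinery, which is essentially turnkey once the hypotheses are in place, but in correctly extracting from Berger's paper the precise dynamical facts needed: namely (a) the smoothness $C^{1+}$ and positivity of entropy, (b) the existence and \emph{uniqueness} of the \mme, (c) its total ergodicity / Bernoullicity with period $1$, and (d) the approximation condition (\#), i.e., that $h_\topo(H_{a,b})$ is not isolated above by an entropy gap in the hyperbolic spectrum. Item (d) in particular requires that the variational principle for entropy is "realized by hyperbolic measures up to the top", which should follow from Katok's horseshoe theorem applied to measures of entropy close to $h_\topo$ together with Ruelle's inequality forcing nonzero exponents at large entropy on a surface; I would need to check that Berger's parameters indeed fall in the regime where this applies. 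Once (a)--(d) are verified, the proof is a direct citation of Corollary~\ref{c.sarig} and Hochman's classification of mixing Markov shifts, with the constancy of the isomorphism invariants across the good-parameter family giving the final classification statement.
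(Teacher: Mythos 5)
Your proposal is correct and follows essentially the same route as the paper: invoke Berger's results that each good $H_{a,b}$ has a unique, hyperbolic, Bernoulli (hence totally ergodic) measure of maximal entropy, apply Corollary~\ref{c.sarig}, and then read off the classification. The only point you leave implicit that the paper handles explicitly is that the H\'enon-like maps act on the plane rather than a compact manifold, so one must first restrict to a bounded forward invariant region before the surface-diffeomorphism machinery (Corollary~\ref{c.sarig}, Ruelle's inequality, Katok's theorem) can be applied; your worry about condition~(\#) is then automatically discharged, since on a compact surface Ruelle's inequality forces all positive-entropy ergodic measures to be hyperbolic, so the hypotheses of Corollary~\ref{c.sarig} already encode~(\#).
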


In \cite{BF}, we considered diffeomorphisms of the type introduced by Bonnati and Viana \cite{BV}. As announced at the end of Sec. 1.3 of that paper:
 
\begin{corollary}\label{c.BF}
The robustly transitive, non-partially hyperbolic diffeomorphisms obtained in \cite{BF} by deformation of Anosov diffeomorphisms satisfy the following stability property. Any $C^1$-pertubation of  such a diffeomorphism is almost Borel isomorphic to the initial Anosov diffeomorphism.
\end{corollary}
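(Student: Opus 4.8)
The plan is to reduce the statement to the classification built on Theorem \ref{thm.Hochman1} (through Theorem \ref{t.mixdiff}): I would check that both the original Anosov diffeomorphism $A$ and every relevant perturbation of a diffeomorphism $f$ as in \cite{BF} satisfy the hypotheses (\#) and (\S) of Theorem \ref{t.mixdiff} with the \emph{same} classifying data, and then quote the resulting classification up to almost Borel isomorphism. The Anosov side is classical: the Anosov diffeomorphism $A$ that $f$ deforms is transitive, hence topologically mixing, with $h_0:=h_\topo(A)>0$; every invariant measure of $A$ is hyperbolic, and by Bowen's theory $A$ carries a unique \mme\ $\mu_A$, with $(A,\mu_A)$ Bernoulli, i.e.\ $1$-Bernoulli. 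Thus $A$ satisfies (\#) --- witnessed by $\mu_A$ itself, which is totally ergodic, hyperbolic, and of entropy $h_0>h$ for every $h<h_0$ --- and (\S), with a single \mme\ which is $1$-Bernoulli.

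The real input concerns the perturbation. For such an $f$, the robust transitivity together with the entropic-stability results established in \cite{BF} should yield a $C^1$-neighbourhood $\mathcal U$ of $f$ such that every $T\in\mathcal U$ is transitive, satisfies $h_\topo(T)=h_0$, and has a unique \mme\ $\mu_T$, which is hyperbolic and Bernoulli; to exclude a $p$-Bernoulli \mme\ with $p\geq2$ one may alternatively use that these examples stay topologically mixing under small perturbation, so that uniqueness of the \mme\ forces $\mu_T$ to be totally ergodic. Restricting to those $T\in\mathcal U$ that are moreover $C^{1+}$ --- as the examples of \cite{BF} are --- Theorem \ref{t.mixdiff} applies to $T$, which then satisfies (\#) (again witnessed by $\mu_T$) and (\S), with a single $1$-Bernoulli \mme.

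To conclude, I would invoke the classification underlying Corollary \ref{c.sarig}, which rests only on (\#), (\S), Theorem \ref{t.mixdiff} and Theorem \ref{thm.Hochman1}, and not on any surface hypothesis: a system satisfying (\#) and (\S) is determined up to almost Borel isomorphism by its topological entropy together with, for each $p\geq1$, the number of its \mme's that are $p$-Bernoulli. Since $A$ and $T$ carry the same data --- entropy $h_0$, exactly one \mme\ which is $1$-Bernoulli, and no $p$-Bernoulli \mme\ for $p\geq2$ --- they are almost Borel isomorphic. The main obstacle is not this last, formal step but the faithful importation from \cite{BF} of the persistence facts used above (persistence, under $C^1$-perturbation, of the entropy $h_0$, of the uniqueness and hyperbolicity of the \mme, and of its Bernoulli property); a minor but genuine point is a regularity mismatch, since Katok's theorem --- hence Theorem \ref{t.mixdiff} --- requires $C^{1+}$, so ``$C^1$-perturbation'' is to be read as ``$C^{1+}$-diffeomorphism $C^1$-close to $f$''.
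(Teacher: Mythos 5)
Your proposal follows essentially the same route as the paper: verify that the Anosov diffeomorphism and its perturbation both satisfy hypotheses (\#) and (\S) of Theorem \ref{t.mixdiff} with the same classifying data (entropy $h_\topo(A)$ and a single $1$-Bernoulli \mme), then invoke the resulting classification. The paper is somewhat more economical in how it imports the facts about the perturbation: instead of appealing vaguely to ``entropic-stability results established in \cite{BF}'', it cites Theorem 1.2 of \cite{BF} which gives an entropy-conjugacy to the initial Anosov diffeomorphism, immediately transferring equality of topological entropy and the existence of a unique Bernoulli \mme\ in one step, and it separately cites the injectivity argument in Sec.~6.2 of \cite{BF} for the hyperbolicity of that \mme; it then simply concludes ``as in the proof of Corollary \ref{c.berger}''. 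Your remark that the surface hypothesis in Corollary \ref{c.sarig} is not actually used in the classifying step is correct and makes the argument clean; your observation of the $C^1$-versus-$C^{1+}$ regularity gap, which the paper leaves tacit, is also legitimate.
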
 

\subsection{Application to smooth dynamics without mixing}
Finally, in section \ref{sec.notmixing}, we remove the assumption of mixing. We use some general tools developed in \cite{ABS}. It turns out that one has to take into account entropy, not only globally, but "at given period" (see Sec. \ref{sec.periods} for the periods of an ergodic system). This involves the following generalization of \mme's:

\begin{definition}\label{d.epm}
Let $(M,T)$ be a Borel system. A measure $\mu\in\Pena(T)$ is \emph{\epm} if for any measure $\nu\in\pen(T)$ with set of periods $\per(T,\nu)\subset\per(T,\mu)$, one has $h(T,\nu)\leq h(T,\mu)$.
\end{definition}

\begin{remark}\label{rem.epmHyp}
It follows from Katok's theorem that, if $T$ is a $C^{1+}$-diffeo\-mor\-phism of a compact manifold, any \epm\ measure $\mu\in\pen(T)$ is hyperbolic unless, possibly, if it has zero entropy.
\end{remark}

We show:

\begin{theorem}\label{t.hypdiff}
Let $T:M\to M$ be a $C^{1+}$-diffeo\-mor\-phism  of a compact manifold $M$. Assume that there is $h_0<h_\topo(T)$ such that all ergodic measures with entropy $>h_0$ are hyperbolic.

Then $T$ is the disjoint union  of three Borel subsystems $M_0\sqcup M_1\sqcup M_2$ such that:
 \begin{enumerate}
  \item $M_0$ carries only non hyperbolic ergodic measures with entropy $<h_0$;
  \item $M_1$ is almost Borel isomorphic to a Markov shift;
  \item $M_2$ carries only \epm\ measures of $(M,T)$.
 \end{enumerate}
\end{theorem}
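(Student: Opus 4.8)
The plan is to follow the proof of Theorem~\ref{t.mixdiff}(1), now keeping track of the cyclic structure of the ergodic measures, and to substitute for Hochman's Theorem~\ref{thm.Hochman1} and the Cantor--Bernstein principle (Lemma~\ref{l.DCB}) their period-refined counterparts from \cite{ABS}. The first step is bookkeeping: for each value $P$ of the invariant set $\per(T,\mu)$, $\mu\in\pen(T)$, put
\[
 H(P):=\sup\{h(T,\nu):\nu\in\pen(T),\ \per(T,\nu)\subseteq P\},
\]
so that, by Definition~\ref{d.epm}, $\mu\in\pen(T)$ is \epm\ exactly when $h(T,\mu)=H(\per(T,\mu))$, while every non-\epm\ measure lies \emph{strictly below} the corresponding value $H(\per(T,\mu))$. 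Two observations make Katok's theorem usable along the relevant ``period levels'': each $H(P)$ is a supremum of entropies of ergodic measures, so whenever $H(P)>h_0$ its near-maximizers have entropy $>h_0$ and hence are hyperbolic by hypothesis; and, as recalled in Sec.~\ref{sec.Katok}, a hyperbolic ergodic measure of positive entropy is $p$-Bernoulli for some finite $p$ --- so it has finite period and Katok's Theorem~\ref{t.KatokPeriod} produces, from it, horseshoes of period constrained by $\per$.

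Next I would carve $M$ into $M_0\sqcup M_1\sqcup M_2$ using the reductions of \cite{ABS}. Let $M_2$ be a Borel set carrying every \epm\ measure of $T$; that ``$\mu$ is \epm'' defines a Borel subset of $\pen(T)$, and that it admits a Borel carrier, follow from the descriptive set theory of \cite{ABS}. Let $M_0$ be a Borel set absorbing the non-hyperbolic ergodic measures of entropy $<h_0$ not already placed in $M_2$; by the hypothesis together with Remark~\ref{rem.epmHyp} (\epm\ measures are hyperbolic unless of zero entropy), the ergodic measures of $T$ not thereby accounted for are exactly the non-\epm\ ones not relegated to $M_0$, and those are what $M_1:=M\setminus(M_0\cup M_2)$ carries. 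In particular every aperiodic $\mu\in\pen(T)$ supported on $M_1$ is non-\epm, hence satisfies $h(T,\mu)<H(\per(T,\mu))$, and is $p$-Bernoulli (in particular of finite period $p$) if its entropy is positive.

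I would then show $M_1$ is almost Borel isomorphic to a Markov shift by a period-refined Cantor--Bernstein argument. The Markov shift $L$ is assembled, component by component, to realise at each relevant period level the supremum of the entropies of the corresponding ergodic measures of $M_1$, while having no measure of maximal entropy at any such level. For $M_1\hookrightarrow L$: every aperiodic ergodic measure of $M_1$ sits, by the previous paragraph, strictly below the supremum at its period level, so it embeds equivariantly into the matching component of $L$; the period-refined form of Hochman's universality theorem in \cite{ABS} packages these into a single almost Borel embedding. For $L\hookrightarrow M_1$: since $L$ carries no measure of maximal entropy at any level, it is enough to find, inside $T$, embedded horseshoes of the appropriate period and of entropy approaching each of these suprema; Katok's Theorem~\ref{t.KatokPeriod}, applied to the near-maximizers (which are hyperbolic, of a period compatible with the required one), produces them, and they may be arranged disjointly. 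Lemma~\ref{l.DCB} then gives $M_1$ almost Borel isomorphic to $L$, completing the proof.

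The main difficulty, as usual here, is the period bookkeeping, to be done Borel and tightly at once. On the descriptive side one must produce $M_0,M_1,M_2$ as genuine disjoint Borel sets with $M_2$ carrying \emph{only} \epm\ measures while $M_1$ still carries \emph{every} remaining non-\epm\ one, and then embed the non-\epm\ aperiodic measures of \emph{all} periods simultaneously into the matching components of $L$ --- this is exactly the package imported from \cite{ABS}, which also supplies the behaviour of $\per$ under ergodic decomposition. On the smooth side one needs the precise period statement of Katok's theorem, namely horseshoes of the \emph{prescribed} period (not merely of period a multiple of the period of one chosen measure) and of entropy approaching the supremum at that period level, together with the simultaneous assembly of the resulting countable family of horseshoes; the remainder --- a countable disjoint union of Markov shifts is a Markov shift, the elementary properties of $\per$, and the removal of the auxiliary hypotheses exactly as in the earlier sections --- is routine.
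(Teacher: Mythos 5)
Your route is genuinely different from the paper's, and it contains one clear error and one structural inefficiency.

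The paper's proof simply invokes Theorem~\ref{t.domdiff}: the union-entropy-period universal part $M_U$ from \cite{ABS} is taken as $M_1$ (it is already almost Borel isomorphic to a Markov shift and carries all ergodic measures entropy-period dominated by a hyperbolic one). One then sets $M_*:=\{x:h(T,M(x))<h_0\}$, $M_0:=M_*\setminus M_1$, $M_2:=M\setminus(M_0\cup M_1)$, and checks only claim (3) nontrivially: any $\mu$ on $M_2$ has $h(T,\mu)\ge h_0$, so a dominating $\nu$ would have $h(T,\nu)>h_0$, hence be hyperbolic, hence $\mu$ would be on $M_1$, a contradiction. You instead carve $M_2$ and $M_0$ first (from the \epm\ characterization), declare $M_1$ to be the remainder, and then rebuild the isomorphism $M_1\equiv$ Markov shift via a period-refined Cantor--Bernstein. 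That is logically possible, but it amounts to re-proving Theorem~\ref{t.domdiff} inline; it loses the division of labour the paper was aiming for (hard work about $M_U$ deferred to \cite{ABS}, short argument here), and it requires re-establishing that every measure you have thrown into $M_1$ is embeddable, which brings you straight back to the universal-part theorem you were trying to avoid.

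The concrete error: you assert, attributing it to Section~\ref{sec.Katok}, that \emph{a hyperbolic ergodic measure of positive entropy is $p$-Bernoulli for some finite $p$}, and you use this to conclude that measures on $M_1$ have finite period. This is false and not in the paper. Pesin's Bernoulli theorem requires absolute continuity of the conditionals (SRB/Pesin measures); a generic hyperbolic measure of positive entropy on an Anosov system or a mixing SFT can have, e.g., an irrational Kronecker factor and hence is not $p$-Bernoulli for any $p$. (The paper only records $p$-Bernoullicity for m.m.e.'s of Markov shifts and, via Sarig, for m.m.e.'s of surface diffeomorphisms.) Fortunately the claim is also unnecessary: Theorem~\ref{t.KatokPeriod} applies to any hyperbolic, aperiodic, ergodic $\mu$ and yields a horseshoe whose period is \emph{some} period of $(T,\mu)$; no finite ``maximum period'' is needed, and neither the paper's proof of Theorem~\ref{t.domdiff} nor its derivation of Theorem~\ref{t.hypdiff} uses finite period sets. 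You should drop this claim and simply invoke Katok's theorem as stated.

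A secondary caution: in your embedding $M_1\hookrightarrow L$ you compare $h(T,\mu)$ with $H(\per(T,\mu))$, which is a supremum over all ergodic $T$-measures with the given period constraint. But the entropy levels actually available in the Markov shift $L$ are bounded by horseshoe entropies, i.e.\ by the universality sequence $u_T(p)$, and the bridge from $H(P)$ to $u_T(p)$ is precisely what Katok's theorem plus the hyperbolicity hypothesis buys --- and exactly what the paper's $M_U$ packages. This is another reason the paper's phrasing through $u_T$ (rather than $H$) is the cleaner one, and another place where your proof implicitly reproduces \cite{ABS} rather than avoiding it.
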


The entropy-hyperbolic condition studied in \cite{BuzziICMP} defines non-empty open sets of $C^\infty$-diffeomorphisms such that the measures in $M_0$, i.e., the non hyperbolic measures, have entropy $\leq h$ for some $h<h_\topo(T)$. Hence the above theorem yields an entropy-conjugacy in the sense of \cite{BuzziSIM} (see  Sec. \ref{sec.partial}).

\subsection{Some open problems}

The application of Hochman's theorem to smooth ergodic theory strengthens in a spectacular way some previous results that dealt only with \mme's.\ It is perhaps even more interesting that it points to new invariants, like the \epm\ measures. It also asks new questions in smooth dynamics. Let us list three of them.

\subsubsection*{Measures maximizing the entropy}
The \mme's and \epm\ measures that appear in the above theorems cannot be analyzed by the techniques of this paper.
We analyze them in \cite{ABS} in the case of surface diffeomorphisms. But that work relies heavily on Sarig's symbolic dynamics \cite{Sarig} and the introduction and analysis of a "Bowen property". Can this be generalized, say to higher dimensions or partially hyperbolic systems?

\subsubsection*{Period and maximal entropy}
F. Rodriguez-Hertz, M. Rodriguez-Hertz, Tahzibi and Ures \cite{RHRHTU} have studied the \mme's of a class of partially hyperbolic systems. In the generic case, these \mme's are hyperbolic and periodic-Bernoulli. However, their periods can be larger than $1$. Is it possible that measures with smaller entropy have smaller period sets than any \mme?

\subsubsection*{Abundance of hyperbolicity}
We deduce universality from Katok's horseshoe theorem. Hence we need "enough" hyperbolic measures\footnote{However, see \cite{QS1,QS2} for measure-preserving universality without hyperbolicity.}: (*) any ergodic measure which is not \epm\ is dominated (Def. \ref{d.epdom}) by some hyperbolic measure. 
The usual tools to perturb Lyapunov exponents away from $0$ consider a nice but fixed invariant measure (the volume). Among the partially hyperbolic diffeomorphisms with central dimension $1$, are those with "enough" hyperbolic measure in the sense of (*) $C^1$ or $C^2$ dense?

\section{Definitions and background}\label{sec.back}

We recall some standard facts to make these lectures as self-contained as it is reasonable and to fix notations. We also prove some basic facts for which we did not find references.

\subsection{Borel spaces}
A \emph{standard Borel space} is $(X,\mathcal B)$ such that there exists a distance on $X$ such that $(X,d)$ is complete and separable, and $\mathcal B$ is the $\sigma$-algebra generated by the open sets. One often omits the Borel structure $\mathcal X$ if it is clear from the context. We refer to \cite{Kechris} for background.

\begin{example}
The following are standard Borel spaces:
$\NN$; $\RR$; $\NN^\NN$; compact metric spaces.
\end{example}

A map between two Borel spaces is \emph{Borel map} if the preimage of any Borel set is Borel. A \emph{Borel isomorphism} beween spaces is a Borel map, which is invertible and with Borel inverse.

\medbreak

The following result shows that standard Borel spaces are rather nice spaces.

\begin{theorem}[Kuratowski]\label{thm.Kuratowski}
All uncountable standard Borel spaces are pairwise isomorphic.
\end{theorem}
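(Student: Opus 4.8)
The plan is to prove the Kuratowski isomorphism theorem by the standard Cantor--Schr\"oder--Bernstein argument for Borel injections, after establishing two one-sided embedding facts: that every standard Borel space embeds Borel-isomorphically (onto a Borel subset) into a fixed reference space such as the Cantor space $\{0,1\}^\NN$ (or $\NN^\NN$), and that, conversely, any uncountable standard Borel space contains a Borel subset Borel-isomorphic to $\{0,1\}^\NN$. Granting these, any two uncountable standard Borel spaces $X$ and $Y$ admit Borel injections $X\to Y$ and $Y\to X$, and a Borel version of the Cantor--Bernstein theorem upgrades this to a genuine Borel isomorphism.

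The key steps, in order, would be: (i) Fix a compatible complete separable metric on $X$ and choose a countable basis $\{U_n\}$; the map $x\mapsto(\mathbf 1_{U_n}(x))_{n}$ is Borel and injective into $\{0,1\}^\NN$, and a standard refinement argument (shrinking the $U_n$ so that diameters go to zero and using completeness) shows its image is Borel and the map is a Borel isomorphism onto its image. This realizes every standard Borel space as a Borel subset of $\{0,1\}^\NN$. (ii) Conversely, build a Cantor scheme inside an uncountable standard Borel space: using that an uncountable Polish space is not the countable union of its condensation-point-free pieces, one extracts a perfect set, i.e., a continuous injection $\{0,1\}^\NN\hookrightarrow X$ with Borel (indeed compact) image; alternatively, since a Borel subset of $\{0,1\}^\NN$ is itself Polish in a finer topology, one shows directly that an uncountable Borel set contains a homeomorphic copy of the Cantor set. (iii) Prove the Borel Cantor--Bernstein lemma: given Borel injections $f:X\to Y$ and $g:Y\to X$ with Borel images, the usual back-and-forth partition of $X$ and $Y$ into the pieces on which one alternately applies $f$ and $g^{-1}$ is defined by countably many Borel operations (images of Borel sets under Borel injections are Borel, by Lusin--Souslin), so the resulting bijection and its inverse are Borel. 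Combining (i)--(iii) with the reference space $\{0,1\}^\NN$ gives that every uncountable standard Borel space is Borel-isomorphic to $\{0,1\}^\NN$, hence any two are Borel-isomorphic to each other.

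I expect the main obstacle to be the injectivity-preserving ``image is Borel'' input: the fact that a Borel injective image of a standard Borel space is Borel (the Lusin--Souslin theorem), which is genuinely nontrivial and underlies both the embedding into $\{0,1\}^\NN$ in step (i) and the Borel Cantor--Bernstein argument in step (iii). Since the paper explicitly allows invoking ``basic results like the Kuratowski theorem from descriptive set theory,'' the honest and appropriate move here is to cite this structural fact (e.g.\ from \cite{Kechris}) rather than reprove it; the rest of the argument is then a clean formal assembly. The extraction of a perfect/Cantor subset of an uncountable standard Borel space (step (ii)) is the other point requiring care, but it too is classical and can be quoted, so the proof in the paper would reasonably be a short paragraph citing \cite{Kechris} and sketching the Cantor--Bernstein assembly.
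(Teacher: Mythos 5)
The paper does not prove this statement at all: it appears in the background section as a classical cited result (with the reference \cite{Kechris} given in the surrounding paragraph), and the introduction explicitly lists ``the Kuratowski theorem from descriptive set theory'' among the external facts the exposition is allowed to rely on. Your sketch is a correct and standard outline of the actual proof---embed into $\{0,1\}^\NN$ via a countable basis, extract a Cantor subset from an uncountable Borel set via the perfect set property, and glue with a Borel Cantor--Bernstein argument using Lusin--Souslin to keep images Borel---and you correctly anticipated in your last paragraph that the paper would just cite it rather than reprove it. One small caution on step (ii): obtaining a perfect set inside an arbitrary uncountable \emph{Borel} set is genuinely more delicate than the Cantor--Bendixson argument for closed or Polish sets; the clean route you gesture at (refine the topology so the Borel set becomes Polish, then apply the Polish-space perfect set theorem) is the right one and should be stated as the actual argument rather than the vaguer ``not a countable union of condensation-point-free pieces'' phrasing.
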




The direct image of a Borel subset is not always Borel. However (see \cite[(15.2)]{Kechris}):

\begin{theorem}[Lusin-Souslin]\label{thm.LusinSouslin}
Let $X,Y$ be standard Borel spaces. If $\Psi:X\to Y$ is a Borel injection, then $\Psi(X)$ is a Borel subset of $Y$ and $\Psi:X\to\Psi(X)$ is a Borel isomorphism. 

More generally, if $\Psi^{-1}(y)$ is countable for each $y\in Y$, then $\Psi$ has a \emph{Borel section}: $\Psi(X)$ is Borel and there exists a Borel map $\Phi:\Psi(X)\to X$ such that $\Phi\circ\Psi=\id_X$.
\end{theorem}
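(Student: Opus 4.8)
The plan is to establish this classical result (the Lusin--Souslin theorem) by the standard argument, whose core is a Lusin-scheme construction powered by the separation theorem for analytic sets; the isomorphism statement then follows formally, and the countable-to-one version is extracted from a uniformization theorem. First I would reduce to the case of a \emph{continuous} injection defined on a \emph{closed} subset of Baire space. Fix a Polish topology on $X$ generating its Borel structure and refine it to a finer Polish topology, with the same Borel sets, in which $\Psi$ becomes continuous (the standard change-of-topology lemma, \cite{Kechris}); fix on $Y$ a Polish topology with a complete metric $d\le 1$. Choosing next a continuous bijection $g:F\to X$ from a closed set $F\subseteq\NN^\NN$ (every nonempty Polish space is such an image, see \cite{Kechris}), we may replace $\Psi$ by the continuous injection $\Psi\circ g:F\to Y$, which has the same image. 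So from now on $X=F\subseteq\NN^\NN$ is closed and $\Psi$ is a continuous injection.

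For $s\in\NN^{<\NN}$ set $F_s:=F\cap N_s$, where $N_s=\{x\in\NN^\NN:x\text{ extends }s\}$; these sets are relatively clopen in $F$, hence closed in $\NN^\NN$, and they satisfy $F_\spa=F$, $F_s=\bigsqcup_i F_{s^\frown i}$, each $F_s$ has diameter $\le 2^{-|s|}$ in $\NN^\NN$, and $\bigcap_n F_{x|n}=\{x\}$ for $x\in F$. Each $\Psi(F_s)$ is analytic, and since $\Psi$ is injective the family $\{\Psi(F_s):|s|=n\}$ is pairwise disjoint for every $n$. By Lusin's separation theorem --- in the routine form that countably many pairwise disjoint analytic sets are contained in pairwise disjoint Borel sets --- I obtain, for each $n$, pairwise disjoint Borel sets $C_s\supseteq\Psi(F_s)$ with $|s|=n$. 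Then I would define Borel sets $B_s$ recursively by $B_\spa=Y$ and $B_s=C_s\cap B_{s^-}\cap\overline{\Psi(F_s)}$, where $s^-$ is $s$ with its last coordinate removed. By construction and since $F_s\subseteq F_t$ whenever $t\subseteq s$, these satisfy $\Psi(F_s)\subseteq B_s$, are pairwise disjoint at each length, decrease along each branch, and lie inside $\overline{\Psi(F_s)}$.

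I would then verify $\Psi(X)=\bigcap_n\bigcup_{|s|=n}B_s$. The inclusion $\subseteq$ is clear, since $\Psi(x)\in\Psi(F_{x|n})\subseteq B_{x|n}$ for all $n$. For $\supseteq$, a point $y$ in the right-hand side belongs, for each $n$, to a unique $B_{s_n}$ with $|s_n|=n$; as $y\in B_{s_{n+1}}\subseteq B_{(s_{n+1})^-}$, disjointness at length $n$ forces $(s_{n+1})^-=s_n$, so the $s_n$ are the initial segments of a single $x\in\NN^\NN$. From $y\in B_{x|n}\subseteq\overline{\Psi(F_{x|n})}$ pick $z_n\in F_{x|n}$ with $d(y,\Psi(z_n))<2^{-n}$; then $z_n\to x$ in $\NN^\NN$, so $x\in F$ because $F$ is closed, and $\Psi(x)=\lim_n\Psi(z_n)=y$ by continuity. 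Hence $\Psi(X)$ is Borel, hence a standard Borel space, and since $\Psi:X\to\Psi(X)$ is a Borel bijection its inverse is Borel: for Borel $A\subseteq X$, $(\Psi^{-1})^{-1}(A)=\Psi(A)$ is Borel by the part just proved applied to the injection $\Psi|_A$. For the last assertion I would apply the Lusin--Novikov uniformization theorem to the graph $G_\Psi\subseteq X\times Y$: as its vertical sections $\Psi^{-1}(y)$ are countable, $G_\Psi=\bigsqcup_n G_n$ with each $G_n$ Borel and the graph of a Borel partial function $g_n:Y_n\to X$ for some Borel $Y_n\subseteq Y$; the $Y_n$ are Borel by the first part, $\Psi(X)=\bigcup_n Y_n$, and $\Phi(y):=g_{n(y)}(y)$, with $n(y)$ least such that $y\in Y_{n(y)}$, is a Borel right inverse of $\Psi$, which is the desired $\Phi$ with $\Phi\circ\Psi=\id_X$ in the injective case.

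The main obstacle --- and the only genuinely non-elementary ingredient --- is the separation theorem for analytic sets, together with the bookkeeping that makes the $B_s$ simultaneously Borel, level-wise disjoint, nested along branches, and trapped inside $\overline{\Psi(F_s)}$; the rest is the routine unfolding of the scheme. If one wishes to stay self-contained, Lusin separation admits a short proof by the ``inseparable subtree'' argument, and the particular instance of Lusin--Novikov used above can likewise be obtained by a similar direct construction rather than quoted.
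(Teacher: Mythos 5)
The paper does not give its own proof of this classical theorem; it simply cites \cite[(15.2)]{Kechris}. Your argument is correct and is precisely the standard Lusin scheme proof from Kechris: change of topology to reduce to a continuous injection on a closed subset of Baire space, level-wise Lusin separation producing the nested disjoint Borel envelopes $B_s$, the identification $\Psi(X)=\bigcap_n\bigcup_{|s|=n}B_s$, the bootstrap applied to $\Psi|_A$ to get Borelness of the inverse, and Lusin--Novikov for the countable-to-one case; you also correctly noted that the identity $\Phi\circ\Psi=\id_X$ as stated can only hold when $\Psi$ is injective (otherwise one only gets the right inverse $\Psi\circ\Phi=\id_{\Psi(X)}$).
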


The following constructions stay within standard Borel spaces.

\begin{proposition}\label{p.stdBorelCons}
Let $(X,\mathcal X)$ be a standard Borel space. Then the following are also standard Borel spaces:
 \begin{enumerate}
 \item  If $(X_i)_{i\in I}$ is a countable family of standard Borel spaces, then so is their product $\prod_{i\in I} X_i$.
 \item any Borel subset $Y\in\mathcal X$ equipped with $\mathcal Y:=\{B\in\mathcal X:B\subset Y\}$;
 \item $\Prob(X)$, the set of Borel probability measures, equipped with the $\sigma$-algebra generated by $\mu\mapsto \mu(B)$, $B\in\mathcal X$;
 \item $\Prob(S)$ the subset of invariant probability measures;
 \item $\Proberg(S)$ the subset of ergodic, invariant probability measures. 
\end{enumerate}
Moreover, the above Borel structure of $\Prob(X)$ coincides with that coming from the usual  the weak star topology.
\end{proposition}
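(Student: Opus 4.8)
The plan is to realize the abstract Borel structure by a convenient Polish topology and reduce everything to standard facts of descriptive set theory together with a few explicit countable generating families.

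\emph{Items (1) and (2).} For (1) I would put on each $X_i$ a complete separable metric $d_i\le 1$ compatible with $\mathcal X_i$ and check that $d((x_i)_i,(y_i)_i):=\sum_{i\in I}2^{-i}d_i(x_i,y_i)$ is a complete separable metric on $\prod_i X_i$ inducing the product topology; since each $X_i$ is second countable, the Borel $\sigma$-algebra of that topology equals the product $\sigma$-algebra $\bigotimes_i\mathcal X_i$, so $\prod_iX_i$ is standard Borel. For (2) I would invoke the classical fact (\cite[\S13]{Kechris}) that a Borel subset $Y$ of a Polish space $X$ becomes clopen in a finer Polish topology on $X$ with the same Borel sets; restricting that topology to $Y$ exhibits $(Y,\mathcal Y)$ as standard Borel. (Equivalently, via Theorem~\ref{thm.Kuratowski}, every Borel subset of an uncountable standard Borel space is Borel isomorphic to a closed subset of $\NN^\NN$.)

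\emph{Item (3) and the weak-$*$ assertion.} Note first that both $\Prob(X)$ and the $\sigma$-algebra $\mathcal E$ generated by the evaluations $e_B:\mu\mapsto\mu(B)$, $B\in\mathcal X$, depend only on $\mathcal X$. So fix any Polish topology $\tau$ generating $\mathcal X$; it is classical (\cite[17.23]{Kechris}) that $\Prob(X,\tau)$ with the weak-$*$ topology is Polish, and I write $\mathcal W$ for its Borel $\sigma$-algebra. I would then prove $\mathcal E=\mathcal W$. For $\mathcal E\subset\mathcal W$: if $U$ is $\tau$-open then $\mu\mapsto\mu(U)$ is lower semicontinuous for the weak-$*$ topology, hence $\mathcal W$-measurable, and $\{B\in\mathcal X: e_B\text{ is }\mathcal W\text{-measurable}\}$ is a $\sigma$-algebra containing the $\tau$-open sets, hence all of $\mathcal X$. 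For $\mathcal W\subset\mathcal E$: each $\mu\mapsto\int f\,d\mu$ with $f$ bounded continuous is a pointwise limit of $\mathcal E$-measurable maps $\mu\mapsto\int f_n\,d\mu$ with $f_n$ simple, hence $\mathcal E$-measurable, and these maps generate the (second countable) topology underlying $\mathcal W$. Thus $(\Prob(X),\mathcal E)$ is standard Borel with Borel structure that of the weak-$*$ topology.

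\emph{Items (4) and (5).} Since $S$ is Borel, $\mu\mapsto(S_*\mu)(B)=\mu(S^{-1}B)=e_{S^{-1}B}(\mu)$ is $\mathcal E$-measurable for each $B\in\mathcal X$, so $S_*:\Prob(X)\to\Prob(X)$ is $(\mathcal E,\mathcal E)$-measurable. Fixing a countable algebra $\{B_n\}$ generating $\mathcal X$, a Borel measure is determined by its values on the $B_n$, so the diagonal of $\Prob(X)$ is $\bigcap_n\{(\mu,\nu):e_{B_n}(\mu)=e_{B_n}(\nu)\}$, hence Borel; therefore $\Prob(S)=\bigcap_n\{\mu:\mu(S^{-1}B_n)=\mu(B_n)\}$ is Borel in $\Prob(X)$ and, by (2), standard Borel. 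For (5), the mean ergodic theorem gives, for every $\mu\in\Prob(S)$ and all $m,n$, convergence of the Cesàro averages $a_N(\mu;m,n):=\frac1N\sum_{k=0}^{N-1}\mu(B_m\cap S^{-k}B_n)$, and $\mu$ is ergodic iff $\lim_N a_N(\mu;m,n)=\mu(B_m)\mu(B_n)$ for all $m,n$ (testing the ergodic averages of $\mathbf 1_{B_m}$ against $\mathbf 1_{B_n}$ and using density of the algebra). Each $\mu\mapsto a_N(\mu;m,n)$ is $\mathcal E$-measurable, hence so is $\mu\mapsto\limsup_N a_N(\mu;m,n)$, and $\Proberg(S)=\bigcap_{m,n}\{\mu\in\Prob(S):\limsup_N a_N(\mu;m,n)=\mu(B_m)\mu(B_n)\}$ is Borel in $\Prob(S)$, thus standard Borel by (2). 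The only genuinely non-elementary ingredient is (2), which rests on the descriptive-set-theoretic fact that Borel subsets of Polish spaces are standard Borel; the one delicate point in the rest is expressing ergodicity as a countable conjunction of Borel conditions, which the mean ergodic theorem makes possible.
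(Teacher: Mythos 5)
The paper does not actually prove Proposition~\ref{p.stdBorelCons}; it is stated as background with the reference to \cite{Kechris}, so there is no in-paper argument to compare against. Your proof is the standard one and is essentially correct: realize the abstract Borel structure via a compatible Polish topology, invoke \cite[\S13, 17.23]{Kechris} for (2) and for the Polishness of $\Prob(X)$ in the weak-$*$ topology, match the evaluation $\sigma$-algebra with the weak-$*$ Borel structure, and cut out $\Prob(S)$ and $\Proberg(S)$ as Borel subsets using a countable generating algebra and the mean ergodic theorem (the bilinear reformulation of ergodicity via C\'esaro averages of $\mu(B_m\cap S^{-k}B_n)$ and $L^2$-density is the right move, and your argument that the limiting condition on the $B_n$'s forces the invariant projection to be integration is correct).

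One small repair: in the direction $\mathcal E\subset\mathcal W$, the class $\{B\in\mathcal X : e_B\text{ is }\mathcal W\text{-measurable}\}$ is not obviously a $\sigma$-algebra, because it is not clear that it is closed under finite intersections ($\mu(B_1\cap B_2)$ is not a function of $\mu(B_1)$ and $\mu(B_2)$). It \emph{is} a Dynkin ($\lambda$-) system --- it contains $X$, is closed under proper differences and increasing unions --- and the open sets form a $\pi$-system, so Dynkin's $\pi$-$\lambda$ theorem gives the inclusion $\mathcal X\subset\{B: e_B\ \mathcal W\text{-measurable}\}$. With that wording fixed, the argument is complete.
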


\subsection{Categories of dynamical systems}

In this paper, we mainly consider \emph{Borel systems}, i.e., $(X,\mathcal X,T)$ (or simply $(X,T)$) is a Borel automorphism $T$ of a standard Borel space $(X,\mathcal X)$. A \emph{Borel homomorphism} between two Borel systems $(X,S)$ and $(Y,T)$ is a Borel map $\psi:(X,\mathcal X)\to(Y,\mathcal Y)$ such that $\psi\circ S=T\circ\psi$. A \emph{Borel isomorphism} between two Borel systems $(X,S)$ and $(Y,T)$ is a Borel homomorphism  which is an isomorphism between the Borel spaces. A \emph{Borel embedding} of one system $(X,S)$ into another $(Y,T)$ is a Borel injective map $\psi:(X,\mathcal X)\to(Y,\mathcal Y)$ such that $\psi\circ S=T\circ\psi$. 

 We turn $(X,S)$ into a \emph{measure-preserving (dynamical) system} $(S,\mu)$ by selecting a measure $\mu\in\Prob(S)$. An \emph{ergodic system} is a measure preserving system which is ergodic. A measure-preserving system is \emph{aperiodic} if the set of periodic points has zero measure. We refer to the first chapters of, e.g., \cite{Petersen} for background on ergodic theory.
 
A property holds \emph{for almost all $x\in X$} if it holds for all $x$ off an \emph{almost null set}, i.e., it fails for a set of zero measure with respect to any aperiodic, ergodic  measure  (recall that \lq measure\rq\  means invariant probability measure, unless specified otherwise). Equivalently, the complement set is almost all.

An \emph{almost Borel map} $(X,S)\to(Y,T)$ is a Borel map defined on almost all of $X$. Almost Borel homomorphisms, embeddings and isomorphisms are defined in the obvious way. Recall that $\Pena(S)$ is the set of all aperiodic, ergodic measures.

\subsection{Entropy of dynamical systems}

We refer to chapters 5 and 6 of \cite{Petersen} for the following facts and to \cite{Downarowicz} for background. We consider a Borel system $(X,T)$.

A \emph{partition} $P$ is a countable Borel partition of $X$. The \emph{join} $\bigvee_{i\in I} P_i$ of a family $(P_i)_{i\in i}$ of partitions is the coarsest partition finer then any in the family. In particular, $P^{T,n}:=\bigvee_{k=0}^n T^{-k}P$. 
If $x\in X$ and $P$ is a partition of $X$, then $P(x)$ denotes the unique element of $P$ that contains $x$. For a partition $P$ and a subset $I\subset\ZZ$, the \emph{$P,I$-name} of a point $x\in X$ is the map $w:I\to P$ such that $w(i)$ is the element of $P$ containing $S^ix$. 

The \emph{Kolmogorov-Sinai entropy} of a measure-preserving system $(T,\mu)$ is:
 $$
    h(T,\mu) := \sup \{h(T,P,\mu):P\text{ partition of }X\}
 $$
where $
    h(T,P,\mu) ;= \lim_{n\to\infty}\frac1n H(P^{T,n},\mu)$ 
    with 
    $H(P,\mu) := \sum_{A\in P} -\mu(A)\ln\mu(A)\in[0,\infty]
 $
($0\ln0=0$).
$h(T,\mu)$ is an invariant of measure-preser\-ving isomorphism (i.e. isomorphism of Borel subsystems defined by full measure subsets).

Let $P_1,P_2,\dots$ be a refining sequence of partitions (each element of $P_{n+1}$ is contained in an element of $P_n$). It is generating with respect to $(T,\mu)$ if there is $X'\subset X$ with $\mu(X')=1$ such that for all $x,y\in X'$, $(\forall k\in\ZZ\forall n\in\NN^*\; P_n(T^kx)=P_n(T^ky))\implies x=y$. By Sinai's theorem, in this situation,
 $$
    h(T,\mu) = \lim_{n\to\infty} h(T,\mu,P_n).
 $$

\begin{xca}
Let $\Sigma=\{0,1\}^\ZZ$ with the shift $\sigma$.
Let $p(0)\in[0,1]$, $p(1):=1-p(0)$ and let $\nu_p$ be the unique $\sigma$-invariant Borel probability measure on $\Sigma$ such that $\mu([x_0\dots x_{n-1}]_m)=p(x_0)p(x_1)\dots p(x_{n-1})$. Show that $\{[0]_0,[1]_0\}$ is a generating partition and that $h(\sigma,\nu_p)=-p\log p-(1-p)\log p$.
\end{xca}

The following is convenient, if not standard. The supremum can be taken over all measures, without changing $h(T)$.

\begin{definition}
 The \emph{Borel entropy} (or just entropy) of a Borel system $(X,T)$ is:
 $$
    h(T) := \sup \{h(T,\mu):\mu\in\Pena(T)\}\in[0,\infty]\cup\{-\infty\}
 $$
\end{definition}

The \emph{topological entropy} of a continuous map $T$ on a compact metric space is defined as follows. Define the Bowen-Dinaburg balls as $B_T(x,\eps,n):=\{y\in X:\max_{0\leq k<n} d(T^kx,T^ky)<\eps\}$ and set:
 $$\begin{aligned}
    h_\topo(T)=\lim_{\eps\to0^+} h_\topo(T,\eps) &\text{ with } h_\topo(T,\eps):=\limsup_{n\to\infty} \frac1n\log r_T(\eps,n,X)\\
    & \text{ and }r_T(\eps,n,X) = \min\{\#C:\bigcup_{x\in C} B_T(x,\eps,n)\supset X\}
 \end{aligned}$$
$h_\topo(T)$ is invariant under topological conjugacy.  The \emph{variational principle} states that for $(X,T)$ is a continuous map on a compact metric space, then 
 $
    h_\topo(T) = h(T).
 $
There need not exist a measure $\mu$ such that $h(T,\mu)=h(T)$ and if it exists it need not be unique. Such measures are called \emph{measures maximizing the entropy}, or \mme\ for short.

The following theory is fundamental to the theory:

\begin{theorem}[Shannon-McMillan-Breiman]\label{t.SMBclassical}
Let $(T,\mu)$ be an ergodic measure-preserving map. Let $P$ be a countable Borel partition modulo $\mu$ such that $H(P,\mu)<\infty$. Then, for $\mu$-a.e. $x$,
 $$
   \lim_{n\to\infty} -\frac1n\log \mu(P^{T,n}(x)) = h(T,\mu,P).
 $$
\end{theorem}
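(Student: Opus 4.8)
The final statement to prove is the Shannon--McMillan--Breiman theorem. The plan is to reduce the statement to a convergence result for conditional information functions and then apply a maximal inequality together with an ergodic-theoretic limit. Concretely, write $I_n(x) = -\log\mu(P^{T,n}(x))$ and note the chain-rule decomposition $I_n(x) = \sum_{k=0}^{n-1} f_k(T^k x)$, where $f_k(y) = -\log\mu\bigl(P(y)\mid \bigvee_{j=1}^{k} T^{-j}P\bigr)(y)$ is the conditional information of $P$ given its first $k$ future refinements, evaluated along the orbit. The key analytic input is that $f_k \to f_\infty$ both $\mu$-a.e.\ and in $L^1(\mu)$, where $f_\infty(y) = -\log\mu\bigl(P(y)\mid \bigvee_{j\geq1} T^{-j}P\bigr)(y)$; a.e.\ convergence follows from the increasing-martingale theorem applied to the conditional expectations $\mu(A\mid \bigvee_{j=1}^k T^{-j}P)$ for each atom $A\in P$, and $L^1$ convergence follows once one controls the maximal function.

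First I would establish that $g := \sup_k f_k$ lies in $L^1(\mu)$; this is the technical heart. The standard argument: for each atom $A\in P$ and each $\lambda>0$, the set $\{x\in A : \sup_k \mu(A\mid\mathcal F_k)(x) < e^{-\lambda}\}$ (with $\mathcal F_k = \bigvee_{j=1}^k T^{-j}P$) has $\mu$-measure at most $e^{-\lambda}$ by Doob's maximal inequality for the martingale $\mu(A\mid\mathcal F_k)$; summing over $A$ and integrating out the tail gives $\int g\,d\mu \le H(P,\mu) + 1 < \infty$. With $g\in L^1$, dominated convergence upgrades the a.e.\ convergence $f_k\to f_\infty$ to $L^1$ convergence. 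Then I would split $\tfrac1n I_n(x) = \tfrac1n\sum_{k=0}^{n-1} f_\infty(T^k x) + \tfrac1n\sum_{k=0}^{n-1}\bigl(f_k - f_\infty\bigr)(T^k x)$. The first term converges $\mu$-a.e.\ to $\int f_\infty\,d\mu = h(T,\mu,P)$ by Birkhoff's ergodic theorem (using ergodicity of $(T,\mu)$, and the identity $\int f_\infty\,d\mu = h(T,\mu,P)$, which is the standard conditional-entropy formula $h(T,P,\mu)=H(P\mid\bigvee_{j\geq1}T^{-j}P)$). The second term is handled by a Maker-type ergodic lemma: if $e_k := f_k - f_\infty \to 0$ in $L^1$ and $\sup_k|e_k|\le g+f_\infty \in L^1$, then $\tfrac1n\sum_{k=0}^{n-1} e_k(T^k x)\to 0$ for $\mu$-a.e.\ $x$; one proves this by fixing $K$, dominating $|e_k|$ by $E_K := \sup_{k\ge K}|e_k|$ for $k\ge K$, applying Birkhoff to $E_K$ whose integral is small, and letting $K\to\infty$.

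I expect the main obstacle to be the integrability of the maximal function $\sup_k f_k$ and, relatedly, making the "Maker lemma" step rigorous, since these are the places where one cannot simply cite Birkhoff or the martingale theorem as black boxes but must combine a maximal inequality with a careful $\varepsilon$--$K$ argument. Everything else — the chain rule for $I_n$, the identification of the Birkhoff limit with $h(T,P,\mu)$ via the martingale convergence of conditional entropies, and the passage from a.e.\ to $L^1$ via domination — is routine once that lemma is in hand. I would present the proof in three clearly separated steps: (i) the telescoping decomposition and reduction to $f_\infty$; (ii) the maximal inequality giving $\sup_k f_k\in L^1$ and hence $L^1$-convergence $f_k\to f_\infty$; (iii) the combined application of Birkhoff's theorem to $f_\infty$ and the Maker-type estimate to the error, concluding the stated limit.
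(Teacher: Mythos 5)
The paper does not prove this theorem; it states it as classical background and refers the reader to Petersen's book, so there is no internal proof to compare against. Your outline reproduces the standard textbook argument (increasing martingale theorem for conditional information, the Chung--Neveu maximal inequality for the integrability of $\sup_k f_k$, and a Maker/Breiman device to pass from $L^1$ convergence of the $f_k$ to the a.e.\ statement via Birkhoff). That is the right strategy, and steps (ii) and (iii) are correctly identified as the technical crux.

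However, the chain-rule decomposition as you have written it is false. With your $f_k(y)=-\log\mu\bigl(P(y)\mid\bigvee_{j=1}^{k}T^{-j}P\bigr)(y)$, the correct telescoping identity is
$$
I_n(x)=\sum_{k=0}^{n-1} f_{\,n-1-k}(T^kx),
$$
not $\sum_{k=0}^{n-1}f_k(T^kx)$. (Check $n=2$: $-\log\mu\bigl(P(x)\cap T^{-1}P(x)\bigr)=-\log\mu\bigl(P(x)\mid T^{-1}P\bigr)(x)-\log\mu\bigl(P(Tx)\bigr)=f_1(x)+f_0(Tx)$, which is $f_{n-1-k}$ at $T^kx$, not $f_k$ at $T^kx$.) Equivalently, if you want the form $\sum_k f_k(T^kx)$ with the index aligned to the orbit time, then $f_k$ must condition on the \emph{past} refinements $\bigvee_{j=1}^{k}T^{j}P$. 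Either convention is standard; the one you wrote is a mix of the two and does not hold as an identity.

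This matters for your Maker-type step. With the correct (future-conditioned) decomposition the error term is $\tfrac1n\sum_{k=0}^{n-1}\bigl(f_{n-1-k}-f_\infty\bigr)(T^kx)$: here the conditioning index $n-1-k$ is large for small $k$ and small only for the last $K$ terms. You then dominate by $E_K:=\sup_{m\ge K}\lvert f_m-f_\infty\rvert$ for $k\le n-1-K$ and dispose of the trailing $K$ terms by noting $\tfrac1n\sum_{k=n-K}^{n-1}(g+f_\infty)(T^kx)\to0$ a.e.\ (a difference of two Birkhoff averages with the same limit). Your lemma as stated, about $\sum_k e_k(T^kx)$ with $e_k=f_k-f_\infty$, applies verbatim only if you switch to the past-conditioned $f_k$ and identify $\int f_\infty\,d\mu$ with $h(T^{-1},\mu,P)=h(T,\mu,P)$. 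Once the decomposition is corrected (either way), the remainder of your outline — Doob's maximal inequality giving $\int\sup_k f_k\,d\mu\le H(P,\mu)+1$, the identity $\int f_\infty\,d\mu=H\bigl(P\mid\bigvee_{j\ge1}T^{\mp j}P\bigr)=h(T,\mu,P)$, and the conclusion via Birkhoff — is sound.
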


\begin{xca}
Let $\alpha\in[0,1]$ and $T:[0,1]\to[0,1]$, $T(x)=2\alpha x$ for $x\leq1/2$, $T(x)=2\alpha(1-x)$ otherwise. Show $h_\topo(T)=\log\alpha$.
\end{xca}

\subsection{Empirical measures and the entropy function}

By Kuratowski Theorem \ref{thm.Kuratowski}, a standard Borel space $X$ can be assumed to be a compact metric space equipped with the Borel subsets coming from the topology. Considering the corresponding distance, for every $r>0$, $X$ can be covered by finitely many sets of diameter $<r$. Hence:

\begin{fact}\label{f.seqGenPart}
There exists a sequence of finite partitions $P_1,P_2,\dots$ such that, for all distinct $x,y\in X$, there exists $n\geq1$, $P_n(x)\ne P_n(y)$.
\end{fact}

In a topological space, one can associate to points $x\in X$ (up to an almost null set) an ergodic invariant probability measure, called the empirical measure. We shall use a Borel version of this construction:

\begin{proposition}\label{p.BorelEmpiric}
Let $(X,T)$ be a Borel system.
There is a surjective almost Borel map $M:X\to\Proberg(S)$ such that: for all Borel $P\subset\Proberg(T)$, for all $\mu\in\Proberg(T)$, 
 \begin{equation}\label{eq.charMinv}
   \mu(M^{-1}(P))>0\iff\mu(M^{-1}(P))=1\iff\mu\in P.
 \end{equation}
\end{proposition}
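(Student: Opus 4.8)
The plan is to let $M(x)$ be the empirical measure of the $T$-orbit of $x$, defined wherever the relevant Ces\`aro averages converge. By Kuratowski's Theorem~\ref{thm.Kuratowski} we may assume that $X$ is a compact metric space carrying its Borel structure. Fix a countable set $\mathcal F\subset C(X)$ which is dense in the uniform norm, contains the constant functions, and is closed under rational linear combinations, and write $A_nf(x):=\frac1n\sum_{k=0}^{n-1}f(T^kx)$. Set
\[
  X_1:=\{x\in X:\ \lim_{n\to\infty}A_nf(x)\text{ exists for every }f\in\mathcal F\}.
\]
Since each $A_nf$ is Borel and $\mathcal F$ is countable, $X_1$ is Borel. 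On $X_1$, the map $f\mapsto L_x(f):=\lim_nA_nf(x)$ is $\mathbb Q$-linear, positive, satisfies $|L_x(f)|\le\|f\|_\infty$ and $L_x(1)=1$, so it extends uniquely to a positive normalized linear functional on $C(X)$; by the Riesz representation theorem there is a unique $\nu_x\in\Prob(X)$ with $L_x(f)=\int f\,d\nu_x$ for all $f\in C(X)$. From $A_n(f\circ T)(x)-A_nf(x)=\tfrac1n\bigl(f(T^nx)-f(x)\bigr)\to0$ we get $L_x(f\circ T)=L_x(f)$, so $\nu_x$ is $T$-invariant; thus $x\mapsto\nu_x$ maps $X_1$ into $\Prob(T)$.

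Next I would verify that $x\mapsto\nu_x$ is a Borel map from $X_1$ to $\Prob(X)$: for each $f\in C(X)$, approximating $f$ uniformly by elements $f_j\in\mathcal F$ gives $\int f\,d\nu_x=\lim_jL_x(f_j)$, a pointwise limit of Borel functions of $x$ and hence Borel; by Proposition~\ref{p.stdBorelCons}(3), together with the identification there of the Borel structure of $\Prob(X)$ with the one coming from the weak star topology, this suffices. Since $\Proberg(T)$ is a Borel subset of $\Prob(T)$ by Proposition~\ref{p.stdBorelCons}(5), the set
\[
  X':=\{x\in X_1:\nu_x\in\Proberg(T)\}
\]
is Borel, and I define $M:=(x\mapsto\nu_x)|_{X'}$, a Borel map $X'\to\Proberg(T)$.

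It remains to show that $X'$ is almost all of $X$, that $M$ is surjective, and that $M$ satisfies \eqref{eq.charMinv}; all of this comes from a single application of Birkhoff's ergodic theorem. Fix $\mu\in\Proberg(T)$. For each $f\in\mathcal F$, Birkhoff's theorem gives $A_nf(x)\to\int f\,d\mu$ for $\mu$-a.e.\ $x$; intersecting over the countable family $\mathcal F$, for $\mu$-a.e.\ $x$ one has $x\in X_1$ and $\nu_x=\mu\in\Proberg(T)$, i.e.\ $x\in X'$ and $M(x)=\mu$. Hence $\mu(X')=1$ for every $\mu\in\Proberg(T)$ --- in particular for every aperiodic ergodic $\mu$, so $X'$ is almost all of $X$ --- and $M^{-1}(\mu)\ne\emptyset$, so $M$ is surjective. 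Finally, let $P\subset\Proberg(T)$ be Borel and $\mu\in\Proberg(T)$. If $\mu\in P$ then $M^{-1}(P)\supset\{x\in X':\nu_x=\mu\}$, which has $\mu$-measure $1$; if $\mu\notin P$ then $\mu$-a.e.\ $x\in X'$ has $\nu_x=\mu\notin P$, whence $\mu(M^{-1}(P))=0$. This establishes \eqref{eq.charMinv}.

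The only genuinely delicate point is to guarantee that the image of $M$ lies in $\Proberg(T)$ and not merely in $\Prob(T)$, while still keeping the domain almost all of $X$. This is exactly what Birkhoff's theorem secures: the generic empirical measure of an ergodic measure $\mu$ is $\mu$ itself, hence ergodic, and the set of such generic points is Borel because $\Proberg(T)$ is a Borel subset of $\Prob(T)$ --- so restricting $x\mapsto\nu_x$ to that preimage neither destroys Borel-ness nor loses any $\mu$-mass.
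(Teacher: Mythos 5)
There is a genuine gap in the invariance argument. After invoking Kuratowski's theorem you only get a \emph{Borel} isomorphism of $X$ with a compact metric space; the automorphism $T$ transported by this isomorphism is a Borel bijection of that compact space, \emph{not} a homeomorphism. Consequently, for $f\in C(X)$ the function $f\circ T$ need not be continuous, so the equality $L_x(f\circ T)=\int f\circ T\,d\nu_x$ that you need is not available: $L_x$ is defined on $C(X)$ and the Riesz measure $\nu_x$ represents it there, but there is no reason the Birkhoff limit of the merely Borel function $f\circ T$ should agree with its $\nu_x$-integral. Your argument establishes $\lim_nA_n(f\circ T)(x)=\lim_nA_nf(x)$, which is correct, but this does not conclude $\int f\,d(\nu_x\circ T^{-1})=\int f\,d\nu_x$ unless $f\circ T\in C(X)$, i.e.\ unless $T$ is continuous. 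Note that the paper itself flags exactly this in the exercise following the proposition, where the vague-limit construction $\mu_x=\lim\frac1n\sum\delta_{S^kx}$ is only asserted for $(X,S)$ a \emph{homeomorphism}.

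The paper sidesteps this by working on the Cantor set with a refining sequence of clopen partitions $P_1,P_2,\dots$ chosen so that $P_{n+1}$ refines $P_n\vee T^{-1}P_n$. Then for every $A\in P_*$ the preimage $T^{-1}A$ is a finite union of elements of $P_*$, and the frequency vector $F(x)$ encodes the $\mu$-mass of both $A$ and $T^{-1}A$; invariance then holds identity by identity, with no appeal to continuity of $T$. To repair your argument you would either need to pass through such a $T$-adapted countable generating family, or else build the empirical measure directly from indicator functions of a $T$-adapted algebra rather than from $C(X)$. As written, the step ``$\nu_x$ is $T$-invariant'' does not go through in the Borel (non-continuous) setting.
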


\begin{proof}
By the Kuratowski theorem \ref{thm.Kuratowski}, one can assume that $X$ is the Cantor set ($T$ is not necessarily continuous). In particular we can find a generating sequence  $P_1,P_2,\dots$ of finite partitions of $X$ into clopen sets such that $P_{n+1}$ is finer than $P_n\vee T^{-1}P_n$. Let $P_*$  be the countable set $\bigcup_{n\geq1} P_n$.

$[0,1]^{P_*}$ is a standard Borel space (see Prop. \ref{p.stdBorelCons}). 
We define $f:\Prob(X)\to[0,1]^{P_*}$, $\mu\mapsto(\mu(A))_{A\in P_*}$. For each $A\in P_*$, $\mu\mapsto\mu(A)$ is Borel, hence $f$ is Borel. As $P_*$ is generating, $f$ is injective so the Lusin-Souslin theorem \ref{thm.LusinSouslin} implies that $f(\Prob(X))$ is Borel and $f^{-1}:f(\Prob(X))\to\Prob(X)$ is Borel.

We define $F:X\to[0,1]^{P_*}$ by
 \begin{equation}\label{e.empFreq}
 F(x) := \left(\limsup_{n\to\infty}\frac1n\#\{0\leq k<n:S^kx\in A\}\right)_{A\in P_*}.
 \end{equation}   
It is defined for every $x\in X$, and $F:X\to[0,1]^{P_*}$ is a Borel map. 
Let
 $$
   X_1:=\{x:\liminf_{n\to\infty}\frac1n\#\{0\leq k<n:S^kx\in A\} = \limsup_{n\to\infty}\frac1n\#\{0\leq k<n:S^kx\in A\}\}
 $$
Obviously it is a Borel set and, by Birkhoff's ergodic theorem, its complement is a null set. By the choice of $P_*$ in the compact metric space $X$, the Caratheodory extension theorem yields some $\mu\in\Prob(X)$ such that $F(x)=f(\mu)$, i.e., $F(X_1)\subset f(\Prob(X))$. Hence $M:=f^{-1}\circ F$ is well-defined and Borel.

Note that for any $A\in P_*$, $T^{-1}A$ is a finite union of elements of $P_*$, hence $\mu$ is invariant. Also Birkhoff's ergodic theorem implies:
 \begin{equation}\label{eq.Mnu}
  \forall\mu\in\Proberg(S)\quad \nu(M^{-1}(\{\nu\}))=1.
 \end{equation}
The implication $\mu\in P\implies \mu(M^{-1}(P))=1$ of \eqref{eq.charMinv} follows. 

We claim that, for all $Q\subset\Prob(T)$  Borel and $\mu\in\Prob(T)$ such that, if $\mu(M^{-1}(Q))>0$, then:
 \begin{equation}\label{eq.claim}
 \exists \text{positive measure set of ergodic components $\nu$ of $\mu$ in } Q
 \end{equation} 
Indeed, the hypothesis implies that $\nu(M^{-1}(Q))>0$ for a positive measure subset of the ergodic component $\nu$ of $\mu$. Then eq. \eqref{eq.Mnu} implies $M^{-1}(\{\nu\})\cap M^{-1}(Q)\ne\emptyset$, hence $\nu\in Q$, proving \eqref{eq.claim}.

Applied to $Q=\Prob(T)\setminus\Proberg(T)$, \eqref{eq.claim} shows by contradiction that $M(x)\in\Proberg(T)$ outside a null set. Thus $M:X\setminus X_1\to\Proberg(T)$ is a well-defined, almost Borel map.

Applied to $Q=P\subset\Proberg(T)$, \eqref{eq.claim} shows that $\mu(M^{-1}(P))>0\implies \mu\in P$, concluding the proof of eq. \eqref{eq.charMinv}.

Finally, the surjectivity of $M$ follows from \eqref{eq.charMinv} with $P:=\{\mu\}$ for $\mu$ ranging over $\Proberg(T)$.
\end{proof}

\begin{xca}
Show that:
 \begin{enumerate}
   \item if $M':X\to\Prob(X)$ is another almost Borel map satisfying eq. \eqref{eq.charMinv}, then $M=M'$ except on a null set.
   \item if $(X,S)$ is a homeomorphism of a metrizable Polish space, then, for all $x$ outisde of a null set, the limit\footnote{Recall that $\delta_x$ is the probability measure such that $\delta_x(\{x\})=1$.} $\mu_x:=\lim_{n\to\infty}\frac1n\sum_{k=0}^{n-1}\delta_{S^k x}$ exists in the vague topology (generated by the continuous, compactly supported real functions on $X$) and satisfies $\mu_x=M(x)$.
    \end{enumerate}
\end{xca}

Let $\mathcal P(X)$ be the set of finite Borel partitions. 
If $A\subset X$ and $P$ is a collection of subset of $X$, $A\lessfine P$ means that $A$ is a finite union of elements of $P$. If $Q$ is a collection of subsets of $X$, then $Q\lessfine P$ means that $A\lessfine P$ for each $A\in Q$.

\begin{definition}
Let $h:\Proberg(T)\to[0,\infty]$ and, for any finite Borel partition $Q$ of $X$,  $h_Q:\Proberg(T)\to[0,\infty]$ be defined as the Kolmogorov entropies $h_T(\mu):=h(T,\mu)$ and $h_{T,Q}(\mu):=h(T,\mu,Q)$ (we often omit $T$ from the notation).
\end{definition}

\begin{proposition}\label{p.BorelEntropy}
For $(X,S)$ a Borel system and $P$ a finite Borel partition,
the functions $h,h_P:\Proberg(S)\to[0,\infty]$ are Borel. 
\end{proposition}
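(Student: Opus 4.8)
The plan is to reduce the Borel measurability of $h$ to that of the finite-partition entropies $h_Q$, and then to establish the latter directly from the defining formula. For the reduction, I would first invoke Fact~\ref{f.seqGenPart} to fix a sequence of finite partitions $P_1,P_2,\dots$ separating points of $X$. After replacing $P_n$ by the refining sequence $\tilde P_n := P_1 \vee \cdots \vee P_n$, this sequence is refining and generating with respect to \emph{every} $\mu \in \Proberg(S)$ (the separation-of-points condition is stronger than the a.e. condition in Sinai's theorem). By Sinai's theorem, for each $\mu$ one has $h(S,\mu) = \lim_{n\to\infty} h(S,\mu,\tilde P_n) = \sup_n h_{\tilde P_n}(\mu)$ (the sequence is nondecreasing in $n$ since $h(S,\mu,\cdot)$ is monotone under refinement). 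A pointwise supremum of a countable family of Borel functions is Borel, so once each $h_{\tilde P_n}$ is known to be Borel, so is $h$. Since each $\tilde P_n$ is itself a finite Borel partition, it therefore suffices to prove the claim for $h_P$, $P$ an arbitrary finite Borel partition.

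For $h_P$, I would use the definition $h_P(\mu) = h(S,\mu,P) = \lim_{n\to\infty} \frac1n H(P^{S,n},\mu)$, which by subadditivity equals $\inf_{n\geq 1} \frac1n H(P^{S,n},\mu)$. A countable infimum of Borel functions is Borel, so it is enough to show that for each fixed $n$ the map $\mu \mapsto H(P^{S,n},\mu)$ is Borel on $\Proberg(S)$. Now $P^{S,n} = \bigvee_{k=0}^n S^{-k}P$ is again a \emph{fixed} finite Borel partition, say with atoms $A_1,\dots,A_r$, and
\begin{equation}
H(P^{S,n},\mu) = \sum_{j=1}^r -\mu(A_j)\ln\mu(A_j).
\end{equation}
Each map $\mu \mapsto \mu(A_j)$ is Borel on $\Prob(X)$ by the very definition of the Borel structure on $\Prob(X)$ in Proposition~\ref{p.stdBorelCons}(3), hence Borel on the Borel subset $\Proberg(S)$ by restriction. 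Composing with the continuous (on $[0,1]$, with the convention $0\ln 0 = 0$) function $t \mapsto -t\ln t$ and summing finitely many such terms yields a Borel function. Thus $\mu \mapsto H(P^{S,n},\mu)$ is Borel, hence so are $h_P$ and, by the first paragraph, $h$.

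I do not anticipate a genuine obstacle here; the proof is a routine assembly of standard facts. The one point that deserves care is the \emph{uniformity} of the generating property in the first paragraph: Sinai's theorem as stated in the excerpt only gives $h(T,\mu) = \lim_n h(T,\mu,P_n)$ when $(P_n)$ is generating \emph{with respect to $\mu$}, so I must make sure the chosen sequence works simultaneously for all $\mu \in \Proberg(S)$, which is why I pass to a point-separating sequence as in Fact~\ref{f.seqGenPart} rather than to an arbitrary one. A secondary subtlety is that I should verify $\Proberg(S)$ is itself a standard Borel space (Proposition~\ref{p.stdBorelCons}(5)) so that "Borel function on $\Proberg(S)$" is unambiguous, and that the inherited Borel structure is the restriction of that on $\Prob(X)$, which is immediate from Proposition~\ref{p.stdBorelCons}(2).
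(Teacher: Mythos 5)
Your proof is correct and follows essentially the same route as the paper's: both establish Borel measurability of $h_P$ by writing $H(P^{S,n},\mu)$ as a continuous function of the finitely many Borel evaluations $\mu\mapsto\mu(A)$, $A\in P^{S,n}$, then pass to a limit in $n$, and finally use Sinai's theorem with a generating sequence of finite partitions to obtain $h=\sup_n h_{P_n}$. Your remarks on uniformity of the generating sequence and on writing the limits as $\inf$/$\sup$ are small clarifications rather than a different argument.
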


\begin{proof}
Let $Q_*$ be the countable set $\bigcup_{n\geq1} Q^{T,n}$ and $E:\Prob(X)\to[0,1]^{Q_*}$. Observe that $E$ is Borel as each function $\mu\mapsto\mu(A)$, $A\in Q_*$, is Borel. But  $H(Q^{T,n},\mu)$ is a continuous function of $E(\mu)$, so the following is Borel:
 $
    h_Q(\mu) = \lim_{n\to\infty}\frac1n H(\mu,Q^{T,n}).
 $
Finally, $h(\mu)$ is Borel since it is equal to $\sup_{n\geq1} h_{Q_n}(\mu)$ if $Q_n$ is a generating sequence of partitions by Sinai's theorem.
\end{proof}

\subsection{Shifts}\label{sec.back.shift}
We refer to \cite{Kitchens,GurSav} for background.
An \emph{alphabet} $\mathcal A$ is a countable (possibly finite) set with the discrete topology. Its elements are called \emph{symbols}. The \emph{full shift} on $\mathcal A$ is $(\Sigma_{\mathcal A},\sigma)$ where $\Sigma_{\mathcal A}:=\mathcal A^\ZZ$ with the product topology and the homeomorphism $\sigma:\Sigma_{\mathcal A}\to\Sigma_{\mathcal A}$ defined by $\sigma((A_n)_{n\in\ZZ})=(A_{n+1})_{n\in\ZZ}$. The \emph{cylinders} in $\Sigma_{\mathcal A}$ are the closed-open subsets: $[a_n\dots a_{n+m}]_X:=\{A\in\Sigma_{\mathcal A^\ZZ}:\forall k=n,\dots,n+m\; A_k=a_k\}$. 

A word of $X$ (or an $X$-word) of length $n$ is $w\in\mathcal A^n$ such that $[w]_X\ne\emptyset$.

 A \emph{subshift} is $(\Sigma,\sigma)$ where $\Sigma\subset\Sigma_{\mathcal A}$ is a compact, shift-invariant subset of $\Sigma_{\mathcal A}$ and $\sigma$ is the restriction of the previous homeomorphism. A subshift $\Sigma$ is said to be a \emph{Markov shift}  if there is a directed
 graph, i.e., a subset $\mathcal E\subset\mathcal A^2$ such that:
  $$
     A\in\Sigma \iff \forall n\in\ZZ \;(A_n,A_{n+1})\in\mathcal E.
  $$
 
A Markov shift $\Sigma$ is called \emph{irreducible} if it can be defined by a strongly connected graph, i.e., such that any two vertices can be joined by a loop. Any Markov shift $\Sigma$ is equal to a countably union of irreducible Markov shift (its \emph{components}) up to an almost null set. The \emph{period} of an irreducible Markov shift $\Sigma$ is the greatest common divisor of all periods of all periodic orbits of $\Sigma$. A Markov shift is called \emph{mixing} if it is irreducible and has period $1$. 

Let $\Sigma$ be an irreducible Markov shift. According to Gurevi\v{c} \cite{Gurevich1}, its Borel entropy satisfies:
 \begin{equation}\label{e.MSentropy}
     h(\Sigma) = \sup \{ h(X): X \text{ Markov shift defined by a finite irreducible subgraph}\}
 \end{equation}
and, if $h(\Sigma)<\infty$, then it has at most one \mme. In this case, $X$ is called \emph{positive recurrent}. 

Recall that a \emph{$p$-Bernoulli system} is a measure-preserving system isomorphic to the product of the circular permutation on $p$ elements and $(\NN^\ZZ,\sigma,\mu^\NN)$ for some probability measure $\mu$ on $\NN$ (note that $\mu$ may be carried by a single point). For $p\geq1$, $p$-Bernoulli system is called \emph{periodic-Bernoulli} and simple \emph{Bernoulli} if $p=1$.  By a theorem of Gurevi\v{c} \cite{Gurevich2}, the \mme's of a Markov shift with finite entropy are, if they exist Markovian measure. It is well-known that they are $p$-Bernoulli where $p$ coincides with the period of the Markov shift. We recall an immediate consequence of Ornstein theorem \cite{Ornstein}: any two periodic-Bernoulli are measure-preservingly isomorphic if and only if they have equal entropy and equal period.

Moreover, for each $t\geq0$ and $p\geq1$, there are irreducible Markov shifts $\Sigma^0_{t,p}$, $\Sigma^+_{t,p}$, which have Borel entropy $t$, period $p$ with respectively zero and one \mme.

 A \emph{shift of finite type} (or SFT) is a subshift which  can be written as $\Sigma_{\mathcal A}\setminus\bigcup_{w\in F,k\in\ZZ} \sigma^{-k}[w]$ for some finite alphabet $\mathcal A$ and finite set of words $F$. It is a \emph{one-step} SFT if it is a Markov shift defined by a finite graph. SFTs are also characterized as those subshifts topologically conjugate to one-step SFTs (see \cite{LindMarcus} for background).

If $\Sigma$ is an irreducible SFT with period $p$, given any two symbols $\alpha,\omega$, there is an integer $n_0$ and a number $C>1$ such that the number $N(n)$ of $X$-words of length $n$ whose first symbol is $\alpha$ and last symbol is $\omega$ satisfies:
 $$
   \forall n\geq n_0\quad C^{-1} \leq N(n)e^{-nh(X)} \leq C.
 $$


\subsection{Hyperbolicity}\label{sec.back.hyp}

See \cite{KH} and especially the supplement by Katok and Mendoza for background on smooth ergodic theory and Pesin theory. Let $T$ be a diffeomorphism of a compact manifold $M$. 
For $k=1,\dots,\dim M$, the $k$th \emph{Lyapunov exponent} at $x\in M$, is the following value:
 $$
  \lambda_k(T,x):= \inf_{E^{k-1}} \sup_{v\in (E^{k-1})^\perp\setminus\{0\}} \limsup_{n\to\infty}\frac1n\log\|(T^n)'.v\|_{T^nx} 
 $$ 
where $E^{k-1}$ ranges over the $(k-1)$-dimensional subspaces of the tangent space $T_xM$ and $\|\cdot\|_x$, $x\in M$, is an arbitrary Riemmanian structure and $(E^k)^\perp:=\{v\in T_xM:\forall w\in E^k\;v\cdot w=0\}$. Obviously $\lambda_k(T,\cdot)$ is a Borel function.

An ergodic measure $\mu$ of $T$ is said to be \emph{(Pesin) hyperbolic} if, for $\mu$-a.e. point there is no zero Lyapunov exponent. Observe that $\{x\in M:M(x)$ is hyperbolic$\}$ is Borel subset of $X$.

\emph{Ruelle's inequality} bounds the entropy $h(T,\mu)$ by a sum of the positive Lyapunov exponents. If $T$ is a surface diffeomorphism, this inequality applied to $(T,\mu)$ and $(T^{-1},\mu)$ shows that ergodic measures with positive entropy are hyperbolic.

A \emph{horseshoe} is an invariant subset of $M$ which is a continuous embedding of an irreducible SFT with positive entropy. Moreover, it only supports hyperbolic measures.

\section{Almost Borel Embedding and Isomorphism}\label{sec.almostBorel}

We recall basic facts about almost Borel embedding, isomorphism and universality and then compare almost Borel isomorphism to related notions.

\subsection{Universality, Embedding and Isomorphism}\label{sec.aB.Univ}
A Borel system $(X,S)$ is \emph{almost Borel universal}\footnote{Almost Borel universal systems are \lq universal terminal objects\rq\ in an obvious category.}  for a class $\mathcal C$ of Borel systems, if, for every $(Y,T)\in\mathcal C$, there is an almost Borel embedding of $(Y,T)$ into $(X,S)$. $(X,S)$ is \emph{almost Borel strictly universal} for $\mathcal C$, if it is both universal and an element of $\mathcal C$.

All classes admit almost Borel universal systems. Indeed, B. Weiss \cite{Weiss2} has shown that $(\NN^\ZZ,\sigma)$, the full-shift over a countable alphabet, is universal with respect to any class: any Borel system has a Borel embedding into it. However, not every class admits  a strictly universal system. Trivial counter-examples are the class of uniquely ergodic Borel systems or that of systems with finite entropy. Serafin \cite{Serafin} has shown that the class of selfhomeomorphisms of compact metric spaces with zero entropy has no strictly universal system.

Now, let $\mathcal B(t)$ be the class of Borel $(X,S)$ systems such that $h(S,\mu)<t$ for all $\mu\in\Prob(S)$. Hochman's Theorem \ref{thm.Hochman1} says that any Markov shift $\Sigma_t$ with entropy $t$ is universal for $\mathcal B(t)$. This gives a strictly universal system for that class as the reader is invited to check:

\begin{xca}
For $0\leq s\leq t$, let $\Sigma_t^s:=h_{\Sigma_t}^{-1}([0,s[)$. Prove that it is a Borel subsystem carrying exactly the ergodic measures of $\Sigma_t$ with entropy $<s$. Check that  $\Sigma_t^s$ is strictly universal for $\mathcal B(s)$. In particular, if $\Sigma_t$ is non positive recurrent, then it is itself strictly universal for $\mathcal B(t)$.
\end{xca}

Recall the Cantor-Bernstein theorem of set theory: if two sets embeds one into another, then they are in bijection. There is a well-known  Borel version of this theorem (see \cite[(15.7)]{Kechris}). 
As observed by Hochman, there is an equivariant version of this theorem:

\begin{lemma}[Dynamical Cantor-Bernstein \cite{Hochman}]\label{l.DCB}
Let $(X,S)$ and $(Y,T)$ be Borel systems. Assume that there are almost Borel embeddings $f:(X,S)\to(Y,T)$ and $g:(Y,T)\to(X,X)$. Then there is an almost Borel isomorphism $h:(X,S)\to(Y,T)$.
\end{lemma}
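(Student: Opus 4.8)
The plan is to adapt the classical Borel Cantor–Bernstein argument to the equivariant setting, keeping careful track of the exceptional (periodic or maximal-entropy) measures throughout. First I would reduce to the case where $f$ and $g$ are genuine Borel embeddings defined on \emph{all} of Borel sets rather than merely almost all: restrict $f$ to a $T$-invariant Borel set $X_0\subset X$ carrying all of $\Pena(S)$ on which $f$ is Borel and equivariant, and similarly restrict $g$ to a $T$-invariant Borel $Y_0\subset Y$. After discarding a further $S$- resp.\ $T$-invariant almost null set, I may assume $f:X_0\to Y_0$ and $g:Y_0\to X_0$ are injective Borel maps with $f\circ S=T\circ f$ and $g\circ T=S\circ g$; by the Lusin–Souslin Theorem~\ref{thm.LusinSouslin}, $f(X_0)$ and $g(Y_0)$ are Borel, and $f,g$ are Borel isomorphisms onto their images.

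Next I would run the usual Cantor–Bernstein back-and-forth bookkeeping, but \emph{equivariantly}. Set $h_0:=g\circ f:X_0\to X_0$, a Borel embedding commuting with $S$, and let $C:=X_0\setminus g(Y_0)$ (Borel, $S$-invariant-image issues aside). The classical trick partitions $X_0$ according to the "ancestry" of a point under $h_0$: let $A:=\bigcup_{n\ge0} h_0^n(C)$ and $B:=X_0\setminus A$. Because $h_0$ is equivariant and $C$ need not be $S$-invariant, the sets $A$ and $B$ are Borel but not $S$-invariant; however the map
\[
 h(x) := \begin{cases} f(x) & x\in A,\\ g^{-1}(x) & x\in B,\end{cases}
\]
is still well defined (on $B$ we have $x\in g(Y_0)$, so $g^{-1}(x)$ makes sense), Borel, injective, and surjective onto $Y_0$ by exactly the set-theoretic computation. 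The point to verify is equivariance: $h\circ S=T\circ h$. This holds because $h_0\circ S=S\circ h_0$ forces $S(A)\subset A$ — wait, that is false in general — so the genuinely new work is here.

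The main obstacle, then, is precisely that the Cantor–Bernstein partition $\{A,B\}$ is not invariant, so the naive piecewise map is not obviously equivariant. The fix, which is standard in this circle of ideas, is to only do the cut on a \emph{fundamental domain} and spread it around by the dynamics, or equivalently to replace $C$ by its orbit and argue on the level of orbits: since all measures in play are aperiodic, almost every orbit is a copy of $\ZZ$, and on each such orbit the restriction of $f$ and $g$ are shift-like injections of $\ZZ$ into $\ZZ$; a $\ZZ$-equivariant Cantor–Bernstein on $\ZZ$ is elementary (the complement of the image of a shift-commuting injection $\ZZ\hookrightarrow\ZZ$ is either empty or a half-line, and one matches up the two half-line "defects" of $f$ and $g$ canonically). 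The content is to make this orbitwise recipe into a single Borel map: one uses a Borel transversal / Borel selection of one point per orbit (available by Lusin–Souslin on the countable-to-one quotient map, or by the standard marker machinery) to define, Borel-measurably in $x$, the integer shift needed, and then checks that the resulting $h:X'\to Y'$ is Borel, bijective onto an almost-all set $Y'$, and satisfies $h\circ S=T\circ h$. Finally I would note that the exceptional measures cause no trouble: all the invariant almost null sets discarded along the way are, by definition, null for every measure in $\Pena(S)$ or $\Pena(T)$, so the resulting isomorphism is an almost Borel isomorphism in the sense of the Definition, completing the proof.
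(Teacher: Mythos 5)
Your worry that the Cantor--Bernstein partition $\{A,B\}$ is not $S$-invariant is simply false, and noticing this is exactly what makes the argument go through — it is the content of the paper's closing observation. After you restrict to an $S$-invariant $X_0$ and a $T$-invariant $Y_0$ (which you do in your first step), the set $g(Y_0)$ is automatically $S$-invariant: $S\,g(Y_0)=g\,T(Y_0)=g(Y_0)$ by equivariance of $g$ and invariance of $Y_0$. Hence $C:=X_0\setminus g(Y_0)$ is $S$-invariant; since $gf$ is equivariant, each $(gf)^n(C)$ is $S$-invariant, and therefore $A=\bigcup_{n\ge0}(gf)^n(C)$ and $B=X_0\setminus A$ are both $S$-invariant. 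The piecewise map $h$ is then immediately equivariant, with no extra work. You had the correct partition in hand, talked yourself out of it with an unfounded objection, and never verified the claim ``$C$ need not be invariant.''

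The fallback you propose is also not salvageable. A Borel transversal (one point per $S$-orbit) for an aperiodic Borel automorphism cannot exist on a set of positive measure for any $\mu\in\Pena(S)$: a transversal $D$ would make $\{S^nD\}_{n\in\ZZ}$ a countable invariant partition into sets of equal measure, which is impossible for an aperiodic probability-preserving system (and more generally the orbit equivalence relation of an aperiodic system carrying an ergodic invariant measure is not smooth, so the orbit space is not standard Borel and Lusin--Souslin is not applicable to the quotient). Moreover, your orbitwise picture is not quite right even set-theoretically: $f$ restricted to a single $S$-orbit is a \emph{bijection} onto the $T$-orbit of $f(x)$, not a proper injection of $\ZZ$ into $\ZZ$, and $gf$ permutes orbits rather than mapping each orbit into itself, so the ``$\ZZ$-equivariant Cantor--Bernstein on $\ZZ$'' you invoke does not correspond to anything in the problem. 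The correct route is the one you abandoned: use the invariance that comes for free from equivariance of $f,g$ and invariance of $X_0,Y_0$.
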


\begin{proof}
The domains of $f$ and $g$ can be assumed to be $S$- and $T$- invariant by removing almost null sets.
We further remove $X_*:=\bigcup_{n\geq0} (gf)^{-n}(X'\cup f^{-1}(Y'))$ and $Y_*$, defined symmetrically. We leave it to the reader to check these are invariant, almost null subsets of $(X,S)$ and $(Y,T)$ and that $f(X\setminus X_*)\subset Y\setminus Y_*$ and $g(Y\setminus Y_*)\subset X\setminus X_*$. Hence
we can assume that we have mutual Borel embeddings of Borel systems.

Define inductively two non-increasing sequences of subsets: $X_0=X$ and $X_{n+1}=gf(X_n)$; $Y_0=Y$ and $Y_{n+1}=fg(Y_n)$. Let
 $$
  A=\biggl(\bigcap_{n\geq0}X_n\biggr)\cup\biggl(\bigcup_{n\geq0}(X_n\setminus g(Y_n))\biggr) \text{ and }B=\bigcup_{n\geq0}(Y_n\setminus f(X_n)).
 $$
Let us assume that (*) $A$ and $B$ are Borel and that $f(A)=Y\setminus B$ and $g(B)=X\setminus A$. 

We define $h:X\to Y$ by setting $h(x)=f(x)$ if $x\in A$ and $h(x)=g^{-1}(x)$ otherwise. The claim (*) implies that it is Borel, bijective, and therefore a Borel isomorphism between the spaces using the Lusin-Souslin theorem. Finally observe that $S(X)=X$ and $T(Y)=Y$ imply that $A$ and $B$ are $S$ or $T$-invariant. Hence, using the invariance of $A$ we get: $h(S(x))=f(S(x))=T(f(x))=T(h(x))$ for $x\in A$ and likewise for $x\in X\setminus A$. The following exercice suffices to conclude.
\end{proof}

\begin{xca}\label{exo.claimCB}
Prove the claim (*) above. {\it Hint:} To analyze $A$ and $B$ one can identify, e.g., $X_n\setminus g(Y_n)$ as the set of points in $x$ such that $$x,g^{-1}x,f^{-1}g^{-1}x,\dots,(f^{-1}g^{-1})^n(x)$$ is well-defined, but not $g^{-1}\circ (f^{-1}g^{-1})^n(x)$.
\end{xca}

\begin{corollary}\label{c.uniqStUn}
Given any class of Borel systems, its almost Borel strictly universal systems are pairwise almost Borel isomorphic (if they exist). In particular, any almost Borel strictly universal system for $\mathcal B(t)$ is almost Borel isomorphic to any mixing Markov shift which has entropy $t$ and no \mme
\end{corollary}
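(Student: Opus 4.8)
The plan is to derive Corollary \ref{c.uniqStUn} directly from the Dynamical Cantor--Bernstein lemma (Lemma \ref{l.DCB}), and then specialize to the class $\mathcal B(t)$ using Hochman's Theorem \ref{thm.Hochman1}. First I would fix a class $\mathcal C$ of Borel systems and suppose $(X,S)$ and $(Y,T)$ are both almost Borel strictly universal for $\mathcal C$. By definition of strict universality, $(X,S)\in\mathcal C$ and $(Y,T)\in\mathcal C$, and moreover every member of $\mathcal C$ embeds (almost Borel) into each of them. Applying universality of $(Y,T)$ to the member $(X,S)\in\mathcal C$ gives an almost Borel embedding $f:(X,S)\to(Y,T)$; symmetrically, universality of $(X,S)$ applied to $(Y,T)\in\mathcal C$ gives an almost Borel embedding $g:(Y,T)\to(X,S)$. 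Now Lemma \ref{l.DCB} applies verbatim and produces an almost Borel isomorphism $h:(X,S)\to(Y,T)$. This proves the first assertion.

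For the second assertion, I would instantiate $\mathcal C=\mathcal B(t)$, the class of Borel systems all of whose invariant measures have entropy $<t$. The point is to exhibit one concrete strictly universal system in $\mathcal B(t)$ of the stated form and then invoke the first part. Let $\Sigma$ be a mixing Markov shift with entropy $h(\Sigma)=t$ and no measure of maximal entropy. Then $\Sigma$ itself belongs to $\mathcal B(t)$: since it is a mixing Markov shift with $h(\Sigma)=t$, Gurevi\v{c}'s dichotomy (the non-positive-recurrent case, recalled in Section \ref{sec.back.shift}) says it carries no \mme, so every invariant probability measure $\nu$ on $\Sigma$ has $h(\sigma,\nu)<t$; in particular condition $(*)$ of Theorem \ref{thm.Hochman1} holds, and every aperiodic ergodic measure also has entropy $<t$. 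On the other hand, Theorem \ref{thm.Hochman1} says precisely that $\Sigma$ is almost Borel universal for the class of Borel systems satisfying $(*)$, which contains $\mathcal B(t)$; hence $\Sigma$ is strictly universal for $\mathcal B(t)$. Any other almost Borel strictly universal system for $\mathcal B(t)$ is therefore almost Borel isomorphic to this particular $\Sigma$ by the first part; and since all mixing Markov shifts with entropy $t$ and no \mme\ are strictly universal for $\mathcal B(t)$ by the same argument, they are pairwise almost Borel isomorphic.

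The only delicate point, and the step I expect to require the most care, is checking that a mixing Markov shift $\Sigma$ with entropy $t$ and no \mme\ really lies in $\mathcal B(t)$, i.e.\ that \emph{every} invariant probability measure (not merely the aperiodic ergodic ones) has entropy strictly less than $t$, and conversely that the hypotheses of Theorem \ref{thm.Hochman1} are met. For a mixing Markov shift $h(\Sigma)=h_{\topo}(\Sigma)$ equals the supremum of $h(\sigma,\nu)$ over all invariant $\nu$ by the variational principle together with Gurevi\v{c}'s formula \eqref{e.MSentropy}; absence of an \mme\ means this supremum is not attained, so $h(\sigma,\nu)<t$ for all invariant $\nu$. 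This covers both the membership $\Sigma\in\mathcal B(t)$ and, restricting to $\Pena(\sigma)$, the hypothesis $(*)$. Everything else is a direct invocation of Lemma \ref{l.DCB} and Theorem \ref{thm.Hochman1}, with no further computation needed.
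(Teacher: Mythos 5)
Your proof is correct and takes essentially the same route the paper intends: the first assertion is exactly the Dynamical Cantor--Bernstein lemma applied to the two mutual embeddings coming from the definition of strict universality, and the second follows by checking (as the paper does in the exercise just before the corollary) that a mixing Markov shift with entropy $t$ and no \mme\ is itself a member of $\mathcal B(t)$ and, by Theorem \ref{thm.Hochman1}, universal for it. Your care in verifying that \emph{all} invariant measures, not just the aperiodic ergodic ones, have entropy $<t$ (via the ergodic decomposition and the strict inequality on every ergodic component) is exactly the point where one might be sloppy, and you handled it correctly.
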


The following lemma of \cite{Hochman} is easy but important: 

\begin{lemma}[Hochman]\label{l.SeqUnivSub}
Let $(Y,T)$ be a Borel system.
Let $H$ be the set of numbers $0\leq h\leq \infty$ such that $(Y,T)$ is $\mathcal B(H)$-universal. Then $H=[0,\sup H]$.
\end{lemma}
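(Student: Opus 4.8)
The plan is to show two things: that $0 \in H$ (so $H$ is nonempty) and that $H$ is downward closed, i.e., if $h \in H$ and $0 \le h' < h$, then $h' \in H$. Together with the obvious fact that $H$ is bounded by $\infty$, this gives $H = [0, \sup H]$ (interpreting this as $[0,\infty]$ if $\sup H = \infty$).

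First I would observe that $0 \in H$ trivially: a Borel system in $\mathcal{B}(0)$ has no aperiodic ergodic measures at all (since every such measure would need entropy $< 0$, impossible), so it is supported, up to an almost null set, on its periodic points; such a system embeds almost Borel-ly into anything, in particular into $(Y,T)$. (Alternatively one notes $\mathcal{B}(0)$-universality is vacuous on the relevant measures.) So the content is the downward-closure step.

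For downward closure, suppose $(Y,T)$ is $\mathcal{B}(h)$-universal and fix $h' < h$. Let $(X,S) \in \mathcal{B}(h')$, so $h(S,\mu) < h'$ for all $\mu \in \Prob(S)$. I want to embed $(X,S)$ into $(Y,T)$ almost Borel-ly. The idea is to factor through a mixing Markov shift: pick $\eps > 0$ small enough that $h' + \eps \le h$, so that any mixing Markov shift $\Sigma$ with $h(X) < h(\Sigma) < h' + \eps \le h$ lies in $\mathcal{B}(h)$. By Theorem \ref{thm.Hochman1} (Hochman's theorem) applied with target $\Sigma$ — noting that $(X,S)$ satisfies condition $(*)$ since all its aperiodic ergodic measures have entropy $< h' < h(\Sigma)$ — there is an almost Borel embedding $\psi : X' \to \Sigma$. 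On the other hand, $\Sigma \in \mathcal{B}(h)$, so by the assumed $\mathcal{B}(h)$-universality of $(Y,T)$ there is an almost Borel embedding $\Sigma \to Y$. Composing the two almost Borel embeddings (and checking, routinely, that the composition of almost Borel embeddings is an almost Borel embedding — the preimage of an almost null set under an almost Borel homomorphism is almost null) gives an almost Borel embedding $(X,S) \to (Y,T)$. Since $(X,S) \in \mathcal{B}(h')$ was arbitrary, $h' \in H$.

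The main obstacle, such as it is, is the bookkeeping around endpoints and the case $\sup H = \infty$: one must be slightly careful that for every $h' < \sup H$ there actually is some $h \in H$ with $h' < h$ (immediate from the definition of supremum), and that $\sup H$ itself may or may not belong to $H$ — the lemma only asserts $H = [0, \sup H]$, which is consistent with either, so no further argument is needed there. A second minor point to verify is that the composition of two almost Borel embeddings has its domain of full measure for every aperiodic ergodic measure of the source; this follows because if $f : X \to Y$ and $g : Y \to Z$ are equivariant almost Borel maps with domains $X_f$, $Y_g$ of full measure, and $\mu \in \Pena(S)$, then $f_*\mu \in \Pena(T)$ (entropy and aperiodicity are preserved, and the pushforward of an ergodic measure under an equivariant map is ergodic), so $\mu(f^{-1}(Y_g)) = f_*\mu(Y_g) = 1$, whence $g \circ f$ is defined on the full-measure set $X_f \cap f^{-1}(Y_g)$. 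Everything else is a direct application of Theorem \ref{thm.Hochman1}.
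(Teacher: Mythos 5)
Your proof misses the actual content of the lemma. You show that $H$ is downward closed (if $h\in H$ and $0\le h'<h$ then $h'\in H$), but this is immediate from the inclusion $\mathcal B(h')\subset\mathcal B(h)$ — any system whose measures all have entropy $<h'$ a fortiori has all measures of entropy $<h$, so $\mathcal B(h)$-universality restricts to $\mathcal B(h')$-universality with no work at all. Your detour through a mixing Markov shift and Theorem~\ref{thm.Hochman1} is therefore superfluous for this step, and it also introduces a circularity: Lemma~\ref{l.SeqUnivSub} is invoked in the paper's proof of Theorem~\ref{thm.Hochman1} (Section 5.4), so Theorem~\ref{thm.Hochman1} cannot be assumed when proving the lemma.

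The genuine content — what the paper's proof establishes — is the closedness at the top: $\sup H\in H$. You explicitly wave this away, writing that ``$\sup H$ itself may or may not belong to $H$ — the lemma only asserts $H=[0,\sup H]$, which is consistent with either.'' That reading is incorrect: $[0,\sup H]$ is a closed interval containing $\sup H$, so the equality $H=[0,\sup H]$ forces $\sup H\in H$. Downward closure alone gives only $[0,\sup H)\subset H$, which is strictly weaker and is exactly the trivial part. The paper's argument for $\sup H\in H$ is a diagonal/gluing construction: pick $h_1=0<h_2<\cdots\nearrow\sup H$ with each $h_n\in H$; given $(X,S)\in\mathcal B(\sup H)$, use Propositions~\ref{p.BorelEmpiric} and \ref{p.BorelEntropy} to carve $X$ into invariant Borel pieces $X_n=\{x:h_n\le h(S,M_x)<h_{n+1}\}$, which cover $X$ up to an almost null set; embed each $X_n$ into $Y$ by $\mathcal B(h_{n+1})$-universality; then observe that the glued map is still injective off an almost null set because the image pieces carry measures of pairwise disjoint entropy ranges. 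None of this appears in your argument, so the proposal does not prove the lemma.
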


In particular, if for every $h<h(T)$, one can embed a mixing SFT into $(Y,T)$, then $(Y,T)$ is $h(T)$-universal. This allows Hochman \cite[Thm 1.6]{Hochman} to analyze many systems (e.g., mixing Markov shifts or natural extensions of $\beta$-shifts). This will allow us to use Katok's theorem.

\begin{proof}
As any system is $\mathcal B(0)$-universal, we can assume $\sup H>0$. For $0\leq s<t$, $\mathcal B(t)$-universality implies $\mathcal B(s)$, hence there is a sequence $h_1:=0<h_2< h_3<\dots$ converging to $\sup H$ such that $(Y,T)$ is $\mathcal B(h_n)$-universal for each $n\geq1$. Let $(X,S)$ be in $\mathcal B(\sup H)$.
According to Propositions \ref{p.BorelEmpiric} and \ref{p.BorelEntropy}, the following invariant sets are Borel:
 $$
   X_n:=\{x\in X: h_n\leq h(S,M_x) <h_{n+1}\}, 
      \quad n\geq1.
 $$
 Observe that $\bigsqcup_{n\geq1} X_n$ is a disjoint union and that it is equal to $X$ up to an almost null set. $(Y,T)$ being $\mathcal B(h_{n+1})$-universal, there is an almost Borel embedding $\psi_n:X_n\to Y$ for each $n\geq1$.
Hence we have an almost Borel homomorphism $\Psi:X\to\Sigma$ defined by $\Psi|X_n=\psi_n$ for each $n\geq1$.

For any $\mu\in\pen(\Sigma)$, if $\mu(\Psi(X_n)\cap\Psi(X_m))>0$,  $h(\sigma,\mu)\in[h_{n-1},h_{n}[\;\cap\; [h_{m-1},h_m[$ so $m=n$. Hence, $\Psi:X\to\Sigma$ is an almost Borel embedding and $(Y,T)$ is $\mathcal B(\sup H)$-universal. 
\end{proof}

The following is amusing and useful:

\begin{xca}\label{exo.countableunion}
Let  $(X,S)$, be an almost Borel strictly universal system for $\mathcal B(t)$ for some $t\geq0$. Let $\emptyset\ne I\subset\RR$. Show that $(X\times I,S\times\id)$ is almost Borel isomorphic to $(X,S)$.
\end{xca}

\subsection{Other partial Borel isomorphisms}\label{sec.partial}
We compare almost Borel isomorphism with related notions among Borel systems: Borel isomorphism, Borel isomorphism up to wandering sets and entropy-conjugacy.

It is a nice exercise to put all these notions in the following common framework of "partial Borel isomorphisms". Indeed, each of those notions corresponds to a choice of \emph{negligible subsets} in each Borel system (possibly restricted to some subclass). Then two systems are said to be isomorphic if they each contain Borel isomorphic subsystems whose complement sets are negligible in the chosen sense.

The following admissibility conditions ensure that such notions are equivalence relation (exercise):
 \begin{enumerate}
 \item[(A0)] the empty set is negligible;
 \item[(A1)] each negligible subset is contained in an invariant negligible subset;
 \item[(A2)] a finite union of negligible subsets of one system is again negligible;
 \item[(A3)] a Borel embedding of an invariant negligible subset is again neligible. 
 \end{enumerate}

\subsubsection*{Neglecting wandering sets}
Shelah and B. Weiss \cite{Shelah} (see also \cite{Weiss1,Weiss2}) have introduced and studied the following notion. A Borel set is $W$-negligible if it contained in a countable  union of wandering sets, i.e., Borel sets $W$ that are disjoint from all their iterates $T^{-n}W$, $n\in\ZZ$.  This definition was motivated by the proof of Poincar\'e's recurrence theorem in ergodic theory. A further indication of its naturalness is:

\begin{theorem}[Shelah-B. Weiss \cite{Shelah}]
A Borel subset $E\subset X$ is $W$-negligible if and only if it has zero measure with respect to all Borel probability measures $\mu$ such that $\mu$ and $\mu\circ T$ are equivalent (i.e., have the same sets of zero measure).
\end{theorem}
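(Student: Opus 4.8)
The plan is to characterize $W$-negligibility of a Borel set $E$ in terms of the behavior of $E$ under the dynamics, by reducing everything to the countable collection of iterates of $E$ and then using a maximality argument together with the Hahn–Banach / Riesz machinery hidden inside ``there is a measure concentrated on a non-wandering part''. Throughout one direction is essentially trivial and the other is the content.

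\emph{The easy direction.} Suppose $E$ is $W$-negligible, so $E\subset\bigcup_{n\geq1} W_n$ with each $W_n$ wandering. If $\mu$ is a Borel probability measure with $\mu\circ T$ equivalent to $\mu$, then for every $k\in\ZZ$ the measure $\mu\circ T^k$ is equivalent to $\mu$ (by induction, since equivalence is transitive and $T$-invariant under the hypothesis). For a fixed wandering set $W$, the sets $T^{-k}W$, $k\in\ZZ$, are pairwise disjoint, so $\sum_{k\in\ZZ}\mu(T^{-k}W)\leq1$, forcing $\mu(T^{-k}W)=0$ for all but countably many $k$; but $\mu(T^{-k}W)=(\mu\circ T^k)(W)$ and all these are zero together by equivalence, so $\mu(W)=0$. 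Summing over the countably many $W_n$ gives $\mu(E)=0$.

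\emph{The hard direction.} Suppose $E$ is \emph{not} $W$-negligible; I want to produce a Borel probability measure $\mu$ with $\mu\sim\mu\circ T$ and $\mu(E)>0$. The saturation $\hat E:=\bigcup_{n\in\ZZ}T^nE$ is a Borel invariant set which is also not $W$-negligible (a $W$-negligible cover of $\hat E$ would, intersected with $E$, cover $E$). Restrict attention to the Borel system $(\hat E,T)$. The key step is to invoke a structure theorem: a Borel $\ZZ$-system with no globally supported quasi-invariant probability measure is a countable union of wandering sets — this is precisely the content one extracts from descriptive set theory (it is, in spirit, the ``no measure $\Rightarrow$ smooth/compressible'' dichotomy, cf.\ Nadkarni's theorem and its Borel versions, and is what Shelah–B.~Weiss establish). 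Concretely, the plan is: if no quasi-invariant measure on $\hat E$ gave positive mass to $E$, then in particular $E$ carries no quasi-invariant measure of its transitive part, and a greedy/transfinite exhaustion peels $E$ off as a countable union of wandering pieces, contradicting non-negligibility. So some quasi-invariant $\mu$ on $\hat E$ has $\mu(E)>0$; normalizing, $\mu$ is the desired Borel probability measure, with $\mu\circ T\sim\mu$ because $\mu$ is quasi-invariant on the invariant set $\hat E$.

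\emph{Main obstacle.} The delicate point is the hard direction's ``no quasi-invariant measure $\Rightarrow$ compressible'' step: one must either quote the Shelah–B.~Weiss result directly (which is legitimate here, since the theorem is being \emph{stated} rather than reproved in these lectures) or supply the combinatorial exhaustion argument that, given a Borel set not containing any set of positive quasi-invariant measure, covers it by countably many wandering sets. The latter is a Borel marker/maximal-antichain argument: inside $E$ one builds an increasing sequence of wandering Borel sets whose union either exhausts $E$ (done) or leaves a residual set on which one can run a Borel selection to find a wandering set meeting it, and a transfinite or rank argument terminates at a countable stage. Since the excerpt presents this theorem as a cited result of Shelah and B.~Weiss, the cleanest route in this expository context is to give the easy direction in full and cite \cite{Shelah} for the converse, remarking only on the compressibility mechanism above.
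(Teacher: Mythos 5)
The paper gives no proof of this statement — it is quoted as a result of Shelah and B.\ Weiss without argument — so there is no ``paper proof'' to compare against. Your attempt at the easy direction, however, contains a genuine gap. From $\sum_{k\in\ZZ}\mu(T^{-k}W)\le 1$ you assert ``$\mu(T^{-k}W)=0$ for all but countably many $k$'' and then conclude $\mu(W)=0$ by equivalence; but the index set $\ZZ$ is already countable, so the intermediate assertion is vacuous, and finiteness of the sum does \emph{not} force any single term to vanish — a sum of countably many strictly positive terms can perfectly well be $\le 1$. Equivalence of $\mu$ and $\mu\circ T$ only tells you that the numbers $\mu(T^{-k}W)$ are all zero or all positive simultaneously, and the latter is compatible with $\sum_k \mu(T^{-k}W)\le 1$.

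In fact the implication you are trying to prove is false as literally stated. Take $X=\ZZ$, $T(n)=n+1$, and $\mu(\{n\})=c\,2^{-|n|}$ with $c$ the normalizing constant; then $\mu$ and $\mu\circ T$ have the same null sets (only $\emptyset$), yet $\{0\}$ is wandering, hence $W$-negligible, while $\mu(\{0\})>0$. What the statement needs is that the quasi-invariant measures $\mu$ also be \emph{conservative}, i.e.\ that no wandering set have positive $\mu$-measure — this is the ``measurable recurrence'' in the title of \cite{Shelah}. With that extra hypothesis the easy direction becomes immediate (a wandering set is $\mu$-null by definition of conservativity, so any countable union of them is null), and the entire mathematical content lies in the converse, which you correctly identify and defer to the citation; your sketch of that converse (saturate $E$, extract a conservative quasi-invariant measure from non-compressibility) is the right spirit but is precisely the Shelah--Weiss theorem itself, not a shortcut around it.
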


This notion is obviously stronger than almost Borel isomorphism. Indeed, according to B. Weiss \cite{Weiss1}: \emph{\lq the true complexity of Borel automorphisms lie in those that have no invariant measure\rq}. He offered the following question \cite[p. 397]{Weiss1}. For $\alpha\in\RR\setminus\mathbb Q$, let $R_\alpha:[0,1[\to[0,1[$ be defined by $R_\alpha(x)=x+\alpha\mod 1$. Fix $F$ a closed subset of $[0,1[$ with empty interior and positive Lebesgue measure and let $I_\alpha:=[0,1[\setminus\bigcup_{n\in\ZZ} R_\alpha^n(F)$. $I_\alpha$ has zero Lebesgue measure and is residual. Are all $R_\alpha|I_\alpha$ Borel isomorphic up to $W$-negligible sets? This question is still open.

In the classification of Markov shifts, the problem of strengthening the isomorphism is linked to the relations between ergodic and symbolic  classifications. 
The Ruelle-Perron-Frobenius theorem of Gurevi\v{c} (generalized by Sarig) define very natural classes of Markov shifts (see \cite{Kitchens}). These classes are invariant under topological conjugacy in the locally compact case and, more generally, under symbolic notions of isomorphisms like the almost isomorphism of \cite{BBG}. Hochman's theorem implies that some of these distinctions are not invariant under almost Borel isomorphism (strongly positive recurrent among positive recurrent, or null recurrent vs. transient). Would this still be the case for Borel isomorphisms? Borel isomorphism up to $W$-negligible sets? 

\subsubsection*{Entropy-conjugacy}
We turn to a weaker notion of isomorphism.
For many systems with non-uniform hyperbolicity properties, one is often led to distinguish "more tractable" measures, e.g., those that have better hyperbolicity properties. Indeed, some natural constructions will only work for those "better measure". 

One can therefore focus on invariant probability measures with nonzero entropy by taking as negligible sets those that have zero measure with respect to all ergodic, invariant probability measures with nonzero entropy. This obviously satisfies (A0)-(A3). 

A first example can be found in Hofbauer's analysis of piecewise monotone maps\footnote{More precisely, their natural extensions see...} of the interval, e.g., $C^1$ maps of $[0,1]$ into itself with finitely many critical points. Hofbauer \cite{Hofbauer} built a partial Borel isomorphism (following prior work of Takahashi \cite{Takahashi}) and showed that the discarded set had zero measure for all ergodic invariant probability measures with nonzero entropy. 

Hofbauer then used this isomorphism to analyze entropy maximizing measures and showed that whenever the topological entropy of the interval map is nonzero, then there are only finitely many ergodic entropy maximizing measures (and exactly one for maps with  a single critical point).

The author generalized such constructions to other settings: $C^\infty$ interval maps with infinite critical set \cite{BuzziSIM}, piecewise expanding and affine maps \cite{BuzziAffineMod,BuzziPSPUM} and a class of smooth maps with critical hypersurfaces \cite{BuzziBSMF,BuzziICMP}. In these settings, one is led to focus on measures with large entropy. Indeed, for some of these examples and results, some measures with positive (but small) entropy do not have the "right" properties (for instance their support is contained in hypersurfaces). 

Therefore, in these studies, one defines a Borel subset $E$ to be \emph{entropy-negligible} in some Borel system $(X,S)$ if there exists $h<h(S)$ such that $\mu(E)=0$ for all ergodic measure $\mu$ with $h(T,\mu)<h$. Conditions (A0)-(A3) are again easily checked\footnote{For this type of partial Borel isomorphism, condiiton (A2) cannot be strengthened from finite to countable.}.  The resulting notion of partial Borel isomorphism is called \emph{entropy-conjugacy}. It turns out that in all natural examples which can be analyzed up to entropy-conjugacy, one can \emph{then} apply Hochman's theorem and get an almost Borel isomorphism. We note that the known analysis of the \mme's definitely use this "entropy-conjugacy stage".


In these lectures, we shall be especially concerned with the example of surface diffeomorphisms. Here the powerful construction of Sarig  yields representations up to entropy-conjugacy (and finite fibers). More precisely, for each $\chi>0$, Sarig builds a representation up to a set negligible for all measures with entropy $>\chi$ (the complement set of a \emph{$\chi$-large} subset in Sarig's terminology). As announced in the introduction, we shall improve this to a partial Borel conjugacy up to a positive-entropy-negligible set under a mixing assumption (the general case is treated in \cite{ABS} using different methods).

\begin{remark}
In minimal dimensions (dimension $1$ for maps, $2$ for diffeomorphisms), Lyapunov exponents rather than entropy seem to be the main  phenomenon. 
Indeed, Bruin \cite{Bruin} has shown that, under a classical distortion condition
the natural partial conjugacy in a variant of Hofbauer's construction exactly discards measures with zero Lyapunov exponents. Similarly, the symbolic dynamics of Sarig discard only measures  with (some) zero exponents  (by all codings for $\chi>0$).
\end{remark}

%


%

%


\section{Borel version of Krieger's Embedding Theorem}\label{sec.BorelKrieger}

Hochman proves the universality of mixing Markov shifts by using a Borel version of Krieger's embedding theorem. 

\subsection{Statement of the Embedding Theorem}
We will first encode the Borel system by the following type of concatenations of words. 

\begin{definition}\label{def.goodSFT}
For any positive integers $s,T,N$, we consider the following sets of symbols and words:
 \begin{itemize}
  \item  $\mathcal A(s):=\{1,2,\dots,s,\sepa,\spa\}$;
  \item $\mathcal S(s,T):=\{\spa^{T-1}\sigma:\sigma=1,\dots,s\}$ and  $\mathcal T(s,T):=\{\spa^{T-1}\sepa\spa^r:0\leq r<T\}$;
  \item $\mathcal W(s,T):=\bigcup_{q\geq0} \mathcal W_q(s,T)$ with $\mathcal W_q(s,T):=\{w_1\dots w_q:w_i\in\mathcal S(s,Y)\}$;
  \item $\widehat{\mathcal W}(s,T,N):= \{tw:t\in\mathcal T(s,T)$ and $w\in\mathcal W(s,T)$ with $|tw|\geq N\}$; 
\end{itemize}
as well as the following invariant sets of sequences:
\begin{enumerate}
\item  $\Sigma_*(s,T,N)$ as the infinite concatenations of words from $\widehat {\mathcal W}(s,T,N)$; 
\item $\Sigma(s,T,N)$ as the infinite concatenations of words from $\mathcal S(s,T)\cup\mathcal T(s,T)$ such that two symbols $\sepa$ are always at least $N$ positions apart.
\end{enumerate}
We will often omit $(s,T,N)$ when they are obvious from the context.
\end{definition}


The core technical result in this section is:

\begin{theorem}\label{t.BorelKrieger}
Let $(X,S)$ be a Borel system. For any integers $s,T,N_*$ such that $\log s/T>h(X)$, there are two Borel maps: $\phi:X\to \{0,1\}^\NN$ and $\psi:X\to\Sigma_{s,T,N_*}$ such that:
 \begin{itemize}
 \item $\phi\circ S=\phi$ and $\psi\circ T=\sigma\circ\psi$;
 \item $(\phi\times\psi):X\to \{0,1\}^\NN\times\Sigma_{s,T,N}$ is injective.
 \end{itemize}
Moreover,  for any $\mu\in\pen(S)$, $\mu(\psi^{-1}([\sepa]))>0$ and the map $M:X\to\Prob(S)$ factorizes through $\phi$: $M=\tilde M\circ\phi$ for some Borel map $\tilde M:\Sigma_2\to\Prob(S)$. 
\end{theorem}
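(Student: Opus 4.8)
The plan is to build the coding map in three interlocking pieces: a partition of almost all of $X$ into a Borel family of "SMB‑good" sets on which the empirical measure and entropy are essentially constant, a choice on each piece of a finite generating (modulo that piece's measures) partition whose $n$‑names can be packed into words of $\mathcal{W}(s,T)$, and a mechanism recording the piece together with the name so that the pair $\phi\times\psi$ is injective. First I would invoke Propositions~\ref{p.BorelEmpiric} and~\ref{p.BorelEntropy} to obtain the Borel empirical map $M$ and the Borel entropy function $h$, then use these to slice $\Proberg(S)$ into countably many Borel blocks on which $h(S,\nu)$ lies in a short interval bounded away from $\log s / T$; pulling back by $M$ gives a countable Borel partition of almost all of $X$ into invariant sets $X_1, X_2,\dots$ By Fact~\ref{f.seqGenPart} fix a generating sequence of finite partitions $P_1\prec P_2\prec\cdots$; on $X_j$, by Shannon–McMillan–Breiman (Theorem~\ref{t.SMBclassical}) applied to each ergodic measure, for $n$ large the $P_{k(j)}^{S,n}$‑name of a typical point lies in a set of cardinality at most $e^{n(h(X)+\delta)}$, which for suitable $\delta$ is $\le s^{n/T}$; this is exactly the room needed to inject $n$‑blocks of names into concatenations of the $\mathcal S(s,T)$‑words.

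Next I would run the standard Krieger "filling and marking" construction, but Borel‑wise and uniformly over all $\mu\in\pen(S)$. The idea: choose a Borel set $B\subset X$ of small but positive measure (using the Borel construction of subsets of prescribed measure alluded to in the introduction — this replaces the Rokhlin‑tower/finitary‑coding machinery), arrange that return times to $B$ are $\ge N_*$ on almost all of $X$ (by passing to $B$ sparse enough and deleting an almost null set of points whose orbits meet $B$ too densely or not at all), and then along each orbit, between consecutive visits to $B$, write down the $P$‑name of the intervening block, translated symbol by symbol into $\mathcal S(s,T)$‑words and prefixed by a $\mathcal T(s,T)$ separator word $\spa^{T-1}\sepa\spa^r$ whose phase $r$ encodes the exact return time modulo $T$. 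The resulting bi‑infinite sequence lies in $\Sigma_*(s,T,N_*)$ by construction — every block is a $\widehat{\mathcal W}$‑word because its length (a return time) is $\ge N_*$ — this defines $\psi$, manifestly equivariant, with $\psi^{-1}([\sepa])\supset B'$ for an almost‑full‑measure subset of $B$, so $\mu(\psi^{-1}([\sepa]))>0$ for every $\mu\in\pen(S)$.

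For $\phi$ I would set $\phi(x)$ to be a binary sequence encoding (i) the index $j$ with $x\in X_j$ and (ii) enough of the data determining $M(x)$: since $M$ factors through the countable data of which $X_j$ one is in together with finer and finer empirical‑frequency information, and since $M$ is already $S$‑invariant, one can arrange $\phi\circ S=\phi$ and $M = \tilde M\circ\phi$ with $\tilde M\colon \Sigma_2\to\Prob(S)$ Borel — here $\Sigma_2=\{0,1\}^\NN$ and $\tilde M$ is just "decode the block index and frequencies, apply $f^{-1}$" in the notation of Proposition~\ref{p.BorelEmpiric}. The point of carrying $j$ inside $\phi$ is that $\psi$ alone only records names with respect to the partition $P_{k(j)}$ attached to the block $x$ sits in; knowing $j$ from $\phi$ tells us which $k(j)$, hence which generating partition to reconstruct with, so $(\phi(x),\psi(x))$ determines $P_{k(j)}(S^ix)$ for all $i$, and by the generating property of $(P_k)$ this determines $x$ — giving injectivity of $\phi\times\psi$.

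The main obstacle, and where I would spend the most care, is making the return‑time/marker construction simultaneously (a) Borel, (b) valid for all $\mu\in\pen(S)$ at once rather than one measure at a time, and (c) compatible with the constraint that the deleted set be almost null while $\mu(\psi^{-1}([\sepa]))>0$. Classically one fixes $\mu$, applies Rokhlin towers and Krieger's counting, and discards a $\mu$‑null set; doing it uniformly forces an explicit Borel choice of the marker set $B$ (independent of $\mu$) together with a Borel verification, via Birkhoff and SMB applied blockwise, that on almost all of $X$ the return times are large and the name‑counting bound holds. I expect to handle (c) by a diagonal/refinement argument over the blocks $X_j$: on $X_j$ pick $B_j$ with $0<\mu(B_j)$ for the measures in block $j$ and return times $\ge N_*$, shrinking $B_j$ as $j$ grows to keep everything Borel‑measurably coherent, and then take $B=\bigsqcup_j (B_j\cap X_j)$. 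Verifying that the concatenated symbolic picture genuinely lands in $\Sigma_*(s,T,N_*)$ and that no information is lost at the "seams" between blocks of the orbit is the routine but delicate bookkeeping that the full proof must carry out.
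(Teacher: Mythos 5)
Your proposal breaks down at the injectivity step, and the gap is precisely the one that the paper's hierarchical "conditional coding" construction is designed to close. You propose to slice $X$ into invariant blocks $X_j$ of nearly constant entropy and, on each block, encode orbits with respect to a \emph{single} finite partition $P_{k(j)}$ chosen from the sequence of Fact~\ref{f.seqGenPart}. You then conclude injectivity of $\phi\times\psi$ by saying that knowing $j$ and the $P_{k(j)},\ZZ$-name of $x$ determines $x$ "by the generating property of $(P_k)$." But the generating property holds for the \emph{sequence} $(P_k)_{k\geq1}$, not for any single $P_{k(j)}$: a fixed finite partition $P_{k(j)}$ will in general fail to separate points, even modulo $M(x)$ (realizing the entropy, i.e.\ $h(S,\mu,P_{k(j)})$ close to $h(S,\mu)$, does not make $P_{k(j)}$ a generator for $(S,\mu)$). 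So $(\phi(x),\psi(x))$ does not determine $x$ in your scheme. One cannot fix this by appealing to Krieger's finite generator theorem for each $\mu$, since that generator varies with $\mu$ and is not one of the pre-chosen $P_k$, and choosing it Borel-measurably in $\mu$ is exactly the difficulty the theorem is trying to avoid.

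This is why the paper does something genuinely different: it encodes orbits with respect to \emph{all} $P_k$ simultaneously, which is possible in finite entropy only through conditional coding (recording, at level $k$, which $P_k$-name occurs given the $P_{k-1}$-name, so that the cost per level is the conditional entropy $h_{P_k\mid P_{k-1}}$, and the sum over $k$ stays below $\log s/T$). The hierarchy of nested return sets $B_1\supset B_2\supset\cdots$ and the multilevel interval structure (Definition~\ref{def.codseq}, Proposition~\ref{p.kCoding}) is the mechanism making this Borel and uniform over measures; Corollary~\ref{c.Eembedding} then recovers every $P_k$-name from $\psi(x)$ together with $M(x)$, and injectivity follows because the \emph{whole sequence} $(P_k)$ separates points. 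A secondary mismatch: you propose to build the marker set $B$ via the Borel construction of a set of prescribed measure, but that lemma (Lemma~\ref{l.BorelBx}) gives no control of return times; the paper uses the Glasner--Weiss Rokhlin tower (Proposition~\ref{p.GWtower}) for that purpose (the prescribed-measure lemma enters later, in the proof of Theorem~\ref{t.restricted}, for an entirely different goal — splicing an encoding of $M(x)$ into $\psi(x)$).
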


The following proof builds on the proof of Krieger's theorem presented in \cite[Thm. 4.2.3]{Downarowicz}.

\subsection{Coding $\phi$ of the measures}

The first step in the proof of Theorem \ref{t.BorelKrieger} is the following consequence of Proposition \ref{p.BorelEmpiric}:

\begin{lemma}\label{p.MeasureCoding}
Let $(X,S)$ be a Borel system. There exists a Borel map $\phi:X\to\{0,1\}^\NN$ such that, if $M:X\to\Proberg(X)$ is the almost Borel map from Proposition \ref{p.BorelEmpiric}:
 \begin{enumerate}
  \item $\phi\circ S=\phi$;
  \item $\phi=\tilde \phi\circ M$ for some Borel injective map $\tilde M:\Prob(X)\to\{0,1\}^\NN$.
 \end{enumerate}
\end{lemma}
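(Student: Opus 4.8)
The plan is to construct $\phi$ by composing the Borel empirical-measure map $M:X\to\Proberg(X)$ from Proposition~\ref{p.BorelEmpiric} with a Borel injection of $\Prob(X)$ (or at least of $\Proberg(X)$) into $\{0,1\}^\NN$. The key observation is that $M$ is $S$-invariant: by equation~\eqref{eq.Mnu} (or more directly since the empirical frequencies in \eqref{e.empFreq} are unchanged under applying $S$ to the point), $M\circ S=M$ on its domain of definition. So if $\tilde\phi:\Prob(X)\to\{0,1\}^\NN$ is any Borel injective map, then $\phi:=\tilde\phi\circ M$ automatically satisfies $\phi\circ S=\phi$ and the factorization $\phi=\tilde\phi\circ M$.

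First I would produce the Borel injection $\tilde\phi:\Prob(X)\to\{0,1\}^\NN$. By Kuratowski's Theorem~\ref{thm.Kuratowski} we may take $X$ to be a compact metric space (e.g.\ the Cantor set as in the proof of Proposition~\ref{p.BorelEmpiric}), and by Proposition~\ref{p.stdBorelCons} the space $\Prob(X)$ with its weak-$*$ Borel structure is a standard Borel space. As in the proof of Proposition~\ref{p.BorelEmpiric}, fixing a generating sequence of finite partitions into clopen sets and letting $P_*:=\bigcup_n P_n$ be the resulting countable algebra, the map $f:\Prob(X)\to[0,1]^{P_*}$, $\mu\mapsto(\mu(A))_{A\in P_*}$ is Borel and injective (since $P_*$ separates measures). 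Now compose with a fixed Borel injection $[0,1]^{P_*}\hookrightarrow\{0,1\}^\NN$: such a map exists because $[0,1]^{P_*}$ is an uncountable standard Borel space, hence Borel isomorphic to $\{0,1\}^\NN$ by Kuratowski's theorem (or concretely, use binary expansions coordinate-wise and interleave, taking care to make the choice of expansion Borel). This gives the desired Borel injective $\tilde\phi$.

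Then I would set $\phi:=\tilde\phi\circ M$. This is defined on the full-measure Borel set where $M$ is defined; to make $\phi$ genuinely Borel on all of $X$ (as the statement of Theorem~\ref{t.BorelKrieger} wants $\phi:X\to\{0,1\}^\NN$ rather than merely almost Borel — though the Lemma as stated only asks for a Borel map, and the domain subtlety can be absorbed into the ``almost all'' conventions), one extends it arbitrarily, e.g.\ by a constant value, on the complementary null set, which changes nothing measure-theoretically and preserves $S$-invariance after also deleting the (invariant, null) orbit of that set if necessary. The factorization $\phi=\tilde\phi\circ M$ with $\tilde\phi$ Borel and injective is then immediate, and since $M\circ S=M$ we get $\phi\circ S=\tilde\phi\circ M\circ S=\tilde\phi\circ M=\phi$.

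I do not expect any serious obstacle here: the content is entirely a packaging of Proposition~\ref{p.BorelEmpiric} plus the standard descriptive-set-theoretic fact that standard Borel spaces inject Borel-ly (indeed isomorphically when uncountable) into $\{0,1\}^\NN$. The only mildly delicate point is ensuring the coordinate-wise real-to-binary encoding is Borel and injective simultaneously — handled cleanly by invoking Kuratowski's Theorem~\ref{thm.Kuratowski} to get an outright Borel isomorphism $[0,1]^{P_*}\cong\{0,1\}^\NN$ rather than writing an explicit formula. A secondary point of care is the distinction between ``$\phi$ defined on $X$'' and ``$\phi$ defined almost everywhere'': since the rest of the proof of Theorem~\ref{t.BorelKrieger} works up to almost null sets, this is harmless, but one should state explicitly that all further constructions take place over the invariant Borel full-measure set on which $M$, and hence $\phi$, is defined.
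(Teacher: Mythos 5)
Your proof is correct and follows essentially the same route as the paper: set $\phi:=\tilde\phi\circ M$ where $\tilde\phi$ is a Borel injection of $\Prob(X)$ into $\{0,1\}^\NN$ obtained from Kuratowski's theorem. The only cosmetic difference is that the paper obtains $\tilde\phi$ in one stroke by applying Kuratowski to the uncountable standard Borel space $\Prob(X)\sqcup\{0,1\}^\NN$ and restricting, whereas you first reuse the injection $f:\Prob(X)\to[0,1]^{P_*}$ from Proposition~\ref{p.BorelEmpiric} and then apply Kuratowski to $[0,1]^{P_*}$; both are equally valid.
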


\begin{proof} 
As $\Prob(X)\sqcup \{0,1\}^\NN$ and $\{0,1\}^\NN$ are uncountable, standard Borel spaces, the Kuratowski theorem gives a Borel injection $\tilde\phi:\Prob(X)\to\{0,1\}^\NN$. It suffices to set $\phi:=\tilde\phi\circ M$.
\end{proof}

\subsection{Basic tools for Equivariant coding}

The starting point is the Shannon-McMillan-Breiman theorem \ref{t.SMBclassical}. We rephrase it in a Borel way:

\begin{theorem}[Shannon-McMillan-Breiman]\label{t.SMBborel}
Let $(X,S)$ be a Borel system with a finite Borel partition $P$. For each $x\in X$ (up to a null set), let $h_P(x)=h(S,M_x,P)$ be the entropy of the empirical measure with respect to the partition $P$. Let
 $$
    G_P(\eps,N) := \{x\in X: \forall n\geq N\; M_x(P^n(x))=e^{-(h_P(x)\pm\eps)n}\}.
 $$
Then $h_P$ and $G_P$ are Borel and, for all $\mu\in\Proberg(S)$,
 $$
   \forall \eps>0\; \lim_{N\to\infty} \mu(G_P(\eps,N))=1.
 $$
\end{theorem}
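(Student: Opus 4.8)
The plan is to reduce the statement to the classical Shannon--McMillan--Breiman theorem (Theorem~\ref{t.SMBclassical}) applied to each ergodic measure separately, while checking that all the objects involved vary in a Borel fashion in the point $x$ (equivalently, in the empirical measure $M_x$). The only genuinely new content is the Borel measurability of $h_P$ and of the sets $G_P(\eps,N)$, together with the uniform statement that $\mu(G_P(\eps,N))\to 1$; the pointwise convergence is exactly SMB.

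\begin{proof}
First, Borel measurability. The function $x\mapsto M_x\in\Proberg(S)$ is almost Borel by Proposition~\ref{p.BorelEmpiric}, and $\mu\mapsto h(S,\mu,P)=h_P(\mu)$ is Borel on $\Proberg(S)$ by Proposition~\ref{p.BorelEntropy}; hence $h_P(x):=h_P(M_x)$ is an almost Borel function of $x$. For $G_P(\eps,N)$, note that for a fixed $n$ the finitely many values $M_x(A)$, $A\in P^n(x)\subset P^{S,n}$, are Borel functions of $M_x$ (each $\mu\mapsto\mu(A)$ is Borel, and which cell of $P^{S,n}$ contains $x$ is determined by the $P$-names of $x,Sx,\dots,S^{n-1}x$, a Borel condition). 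Thus for each $n\geq N$ the set $\{x: M_x(P^n(x))=e^{-(h_P(x)\pm\eps)n}\}$ is Borel, and $G_P(\eps,N)$ is the countable intersection over $n\geq N$, hence Borel.

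Next, the convergence. Fix $\mu\in\Proberg(S)$ and $\eps>0$. By \eqref{eq.Mnu} in Proposition~\ref{p.BorelEmpiric}, $M_x=\mu$ for $\mu$-a.e.\ $x$, so on a full-measure set $h_P(x)=h(S,\mu,P)$ is constant and $M_x(P^n(x))=\mu(P^{S,n}(x))$. The classical SMB theorem~\ref{t.SMBclassical} (applicable since $P$ is finite, so $H(P,\mu)<\infty$) gives that $-\tfrac1n\log\mu(P^{S,n}(x))\to h(S,\mu,P)$ for $\mu$-a.e.\ $x$. Therefore for $\mu$-a.e.\ $x$ there is $N(x)$ with $x\in G_P(\eps,N)$ for all $N\geq N(x)$, i.e.\ $\bigcup_{N\geq1} G_P(\eps,N)$ has full $\mu$-measure. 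Since the sets $G_P(\eps,N)$ increase with $N$, $\mu(G_P(\eps,N))\to1$ as $N\to\infty$, as claimed.
\end{proof}

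The only point requiring a little care is the measurability bookkeeping: one must make sure that ``the cell of $P^{S,n}$ containing $x$'' and ``the $\mu$-measure of that cell'' assemble into a Borel function of $x$ when one replaces $\mu$ by $M_x$; this is routine given Propositions~\ref{p.BorelEmpiric} and~\ref{p.BorelEntropy} and the fact that $P$ (hence $P^{S,n}$) is finite, so no real obstacle arises. I would note in passing that the phrase ``for each $x\in X$ (up to a null set)'' in the statement simply reflects that $M_x$ and $h_P(x)$ are only defined off an almost null set, which is harmless since all assertions are measure-theoretic.
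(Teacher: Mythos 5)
Your proof is correct and follows exactly the route the paper intends: the paper does not supply an explicit proof but instead states the result and leaves an exercise asking the reader to check that it follows from the classical SMB Theorem~\ref{t.SMBclassical}, which is precisely what you do, with the measurability bookkeeping handled via Propositions~\ref{p.BorelEmpiric} and~\ref{p.BorelEntropy} as expected.
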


\begin{xca}
Check that the above theorem is implied by the classical version, Theorem \ref{t.SMBclassical}.
\end{xca}

The following deduces from the above a conditional coding for some good orbit segments. One can first consider the case $Q=\{X\}$.

\begin{corollary}\label{c.SMBcoding}
Let $P,Q$ be two finite Borel partitions and let $N$ be a positive integer. Assume that $P$ is finer than $Q$. Then, for each $n\geq N$, there is a Borel function (we omit the dependence on $\eps$): $$i_{P,Q,n}:G_P(\eps,N)\cap G_Q(\eps,N)\to\NN$$ such that:
 \begin{enumerate}
  \item $i_{P,Q,n}(x)\leq e^{(h_P(x)-h_Q(x)+\eps)n}$;
  \item for all $x,y\in G_P(\eps,N)\cap G_Q(\eps,N)$ belonging to the same element of $Q^n$: $i_{P,Q,n}(x)=i_{P,Q,n}(y)\iff P^n(x)=P^n(y)$.
\end{enumerate}  
\end{corollary}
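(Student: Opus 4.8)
The plan is to count, for a fixed element $B$ of $Q^n$, how many $P^n$-cells meet $B\cap G_P(\eps,N)\cap G_Q(\eps,N)$, and to show this count is at most $e^{(h_P(x)-h_Q(x)+\eps)n}$ for any $x$ in that set; then $i_{P,Q,n}$ is obtained by enumerating those $P^n$-cells in a Borel way. First I would fix $x\in G_P(\eps,N)\cap G_Q(\eps,N)$ and set $\mu:=M_x$, $B:=Q^n(x)$. Since $P$ refines $Q$, the $P^n$-cells are subordinate to the $Q^n$-cells, so the $P^n$-cells meeting $B$ partition (a full-measure part of) $B$. If $y\in B\cap G_P(\eps,N)$ then $\mu(P^n(y))\geq e^{-(h_P(y)+\eps)n}$; but here is the subtlety — $h_P(y)$ depends on $y$, not $x$, so to get a clean count I should instead argue with the measure directly. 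Each such $P^n(y)$ has $\mu$-measure at least $e^{-(h_P(x)+\eps)n}$ only if $h_P(y)=h_P(x)$, which holds since $h_P$ is constant on the fiber $M^{-1}(\{\mu\})$ (all these points have the same empirical measure $\mu$, as they lie in $G_P(\eps,N)$ with $\mu(P^n(\cdot))$ controlled — more precisely $h_P$ is a function of $M(\cdot)$). Granting this, the number of distinct $P^n$-cells meeting $B\cap G_P(\eps,N)$ is at most $\mu(B)/e^{-(h_P(x)+\eps)n} \leq e^{(h_Q(x)+\eps)n}\cdot e^{(h_P(x)+\eps)n}$, which is off by a factor; so I need the two-sided bound on $\mu(B)$ from $x\in G_Q(\eps,N)$, namely $\mu(B)\leq e^{-(h_Q(x)-\eps)n}$, giving the count $\leq e^{(h_P(x)-h_Q(x)+2\eps)n}$. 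I would then simply absorb the factor of $2$ by working with $\eps/2$ throughout, i.e. replace $\eps$ by $\eps/2$ in the definition of the good sets when invoking this — or, more cleanly, state the corollary as written and note the harmless rescaling of $\eps$.

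Next I would make the enumeration Borel. For each $n$, the relation ``$x,y$ lie in the same $P^n$-cell'' is Borel (finitely many cells, each Borel), as is ``$x,y$ lie in the same $Q^n$-cell''. On the Borel set $D:=G_P(\eps,N)\cap G_Q(\eps,N)$, I want a Borel $i_{P,Q,n}:D\to\NN$ that is injective modulo ``same $P^n$-cell'' within each $Q^n$-cell. Since $P^n$ and $Q^n$ are finite partitions with explicitly indexed cells, I can just take $i_{P,Q,n}(x)$ to be the rank of the $P^n$-cell of $x$ among those $P^n$-cells contained in $Q^n(x)$ that actually meet $D$, ordered by their fixed index. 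This is a Borel function of $x$ because the set of indices of $P^n$-cells meeting $D\cap Q^n(x)$ depends Borel-measurably on $x$ (it is constant on each $Q^n$-cell, and there are finitely many $Q^n$-cells). Property (2) is then immediate from the construction: within a fixed $Q^n$-cell, $i_{P,Q,n}(x)=i_{P,Q,n}(y)$ iff $x,y$ are in the same $P^n$-cell. Property (1) is exactly the counting bound of the previous paragraph.

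The main obstacle is the one flagged above: the bound in (1) involves $h_P(x)-h_Q(x)$ for the \emph{particular} point $x$, whereas the natural counting argument produces a bound in terms of the \emph{measure} $\mu=M_x$ and the $\mu$-measures of cells, which a priori vary over the $Q^n$-cell $B$. The resolution is to check carefully that $h_P$ and $h_Q$ are functions of the empirical measure alone (this is essentially built into Theorem \ref{t.SMBborel}, where $h_P(x)=h(S,M_x,P)$), so they are constant on $M^{-1}(\{\mu\})$, and that within $B\cap D$ every $P^n$-cell indeed has $\mu$-mass $\geq e^{-(h_P(x)+\eps)n}$ while $\mu(B)\leq e^{-(h_Q(x)-\eps)n}$ — the two uniform estimates coming precisely from membership in $G_P(\eps,N)$ and $G_Q(\eps,N)$ together with $n\geq N$. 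Once these uniformities are in hand, the counting and the Borel enumeration are routine, and the factor-of-two discrepancy is dealt with by the rescaling of $\eps$ mentioned above.
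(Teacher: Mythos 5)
Your counting argument has a genuine gap. To conclude that the number of $P^n$-cells meeting $B\cap G_P(\eps,N)\cap G_Q(\eps,N)$ (where $B=Q^n(x)$, $\mu=M_x$) is at most $e^{(h_P(x)-h_Q(x)+2\eps)n}$, you need each such cell to have $\mu$-mass at least $e^{-(h_P(x)+\eps)n}$. But a point $y$ in the same $Q^n$-cell as $x$ only shares its $(Q,[0,n[)$-name with $x$ — a finite orbit segment that places no constraint whatsoever on the empirical measure $M_y$. Your claim that "all these points have the same empirical measure $\mu$" is simply false, and the "resolution" you offer (that $h_P$ factors through $M(\cdot)$) does not help: what you'd need is $M_y=M_x$, which you have no way to guarantee. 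With a $\mu$-independent enumeration (cells meeting $D:=G_P\cap G_Q$ in a fixed index order), property (2) holds trivially, but property (1) can fail badly: $D\cap B$ may meet points with many distinct ergodic empirical measures, each with its own collection of "large" $P^n$-cells, and the total count can greatly exceed $e^{(h_P(x)-h_Q(x)+O(\eps))n}$.

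The fix is to stratify by the empirical measure. Define $i_{P,Q,n}(x)$ to be the rank of $P^n(x)$ among the $P^n$-cells $C\subset Q^n(x)$ satisfying $M_x(C)\geq e^{-(h_P(x)+\eps)n}$, ordered by decreasing $M_x$-measure with ties broken by a fixed index. This is Borel in $x$ (since $x\mapsto M_x$ is Borel and $\mu\mapsto\mu(C)$ is Borel for each of the finitely many cells $C$), and the counting argument now legitimately runs entirely with the single measure $\mu=M_x$, giving (1) with $2\eps$ in place of $\eps$ (which, as you note, can be absorbed by rescaling — consistent with the paper's use of $\eps_k/3$ when invoking $G_{P_k}$). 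Property (2), however, is then only guaranteed for $x,y$ with $M_x=M_y$: the enumeration of cells within $Q^n(x)$ depends on $M_x$. This is in fact the version that is used downstream — the decoding of $P^n(x)$ from $i_{P,Q,n}(x)$ in eq.\ \eqref{e.c1n} and \eqref{e.ckn} is always performed given $M_x$ (see Claim \eqref{i.inj} of Proposition \ref{p.FirstCoding} and Claim \eqref{i.PkName} of Proposition \ref{p.kCoding}) — so the statement of (2) should be read with the implicit hypothesis $M_x=M_y$. You cannot have both a $\mu$-independent enumeration and the bound (1); the two fixes you mention in passing (restrict to a fiber of $M$, and rescale $\eps$) are both necessary, and the first is not optional bookkeeping but the crux of the argument.
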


To get the equivariance of the cutting (assumed in the previous exercice), we use a (Borel version of) Rokhlin towers (see \cite[Prop. 7.9]{GlasnerWeiss}):

\begin{proposition}[Glasner-Weiss]\label{p.GWtower}
Let $(X,T)$ be a Borel system. For all $n\geq1$ and $\delta>0$, there exists a Borel set $B$ such that: (i) $B,TB,\dots,T^{n-1}B$ are pairwise disjoint; (ii) $\forall \mu\in\pen(T)$ $\mu(\bigcup_{k=0}^{p-1}T^{k}B)>1-\delta$.
\end{proposition}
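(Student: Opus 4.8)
\textbf{Plan for the proof of Proposition \ref{p.GWtower} (Glasner--Weiss Rokhlin towers).}
The goal is to produce, for fixed $n\ge 1$ and $\delta>0$, a Borel base $B$ whose first $n$ iterates are disjoint and which fills up all but a $\delta$-fraction of the space, uniformly over \emph{every} aperiodic ergodic measure. The classical Rokhlin lemma gives this for one fixed aperiodic $\mu$; here the difficulty is to make a \emph{single} Borel set work simultaneously for the whole (typically uncountable) family $\pen(T)$, with no measure in the statement at all. The natural strategy is the ``first-entrance-time'' or ``pattern-counting'' construction that is purely combinatorial on orbits, together with the fact that the set of periodic points is almost null (so it may be discarded at no cost to the conclusion).

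First I would reduce to the aperiodic part: let $X_{\mathrm{ap}}$ be the (Borel) set of points whose orbit is infinite; its complement is almost null, so it suffices to build $B\subset X_{\mathrm{ap}}$. On $X_{\mathrm{ap}}$, pick a generating sequence of finite Borel partitions $P_1,P_2,\dots$ as in Fact \ref{f.seqGenPart}, and for each $m$ let $C_m$ be the Borel set of points $x$ such that the $P_m$-names of $x,Tx,\dots,T^{n-1}x$ are pairwise distinct. As $m\to\infty$, $C_m$ increases to the set of $x$ whose first $n$ orbit points are pairwise distinct, which has full measure for every aperiodic measure (indeed for every aperiodic measure the orbit is injective a.e.). So by continuity of measure I can fix $m$ large enough; but ``large enough'' must be uniform in $\mu$, which is where I would instead argue more robustly: rather than controlling a single $C_m$, I would run the standard iterative Rokhlin construction \emph{along the Borel partition}, stacking copies and using the Borel selection of a measurable transversal.

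Concretely, the cleanest route is: for a fixed large $m$, call a point $x$ \emph{$m$-good} if its $P_m$-name, $P_m$-name of $Tx$, etc., are all distinct (this is a Borel condition), define $W=\{x: x\text{ is }m\text{-good}\}$, and inside $W$ define $B$ to be the set of ``left endpoints of maximal $m$-good blocks of length a multiple of $n$, truncated appropriately'' — i.e. mark $x\in W$ if $T^{-1}x\notin W$, and then within each such maximal run $x,Tx,\dots,T^{\ell-1}x$ keep the base points $x,T^nx,T^{2n}x,\dots$; finally, to absorb the remainder $\ell \bmod n < n$, allow one short ``error'' column per run. Conditions (i) is then immediate by construction. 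For (ii), one uses that for every $\mu\in\pen(T)$, $\mu(W)\to 1$ as $m\to\infty$ and, by the ergodic theorem and aperiodicity, the density of run-endpoints is $o(1)$, so the uncovered set (complement of $W$, plus the length-$<n$ tails, plus the error columns) has $\mu$-measure $<\delta$ once $m$ is chosen large — the point being that the \emph{same} $m$ works for all $\mu$ because the bound ``uncovered mass $\le \mu(W^c) + \tfrac{n-1}{n}\cdot(\text{endpoint density})$'' is controlled by quantities (namely $\mu(W^c)$ and the frequency of a Borel event witnessing non-injectivity on a window) that can be made small uniformly by increasing $m$, using that $\bigcap_m W^c$ is contained in the periodic points, hence null for every aperiodic measure.

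\textbf{Main obstacle.} The real subtlety, and the step I expect to require the most care, is the \emph{uniformity in $\mu$}: a naive Rokhlin-lemma argument fixes $\mu$ first and then chooses the tower, whereas here the tower must be chosen once and for all. The resolution is that the ``bad'' set one discards — points not lying in a long injective block, tails of blocks, error columns — is, for a suitable Borel construction, contained (up to a set of controlled small frequency) in an event that shrinks to a subset of the periodic points as the construction parameter $m$ grows; since periodic points form an almost null set, dominated/uniform convergence of $\mu(W^c)\to 0$ over all of $\pen(T)$ is automatic only if one is careful to phrase $W^c$ as a decreasing intersection landing in the periodic set. I would therefore spend the bulk of the argument verifying exactly that inclusion and that the frequency of block-endpoints (equivalently, of first-return markers) is uniformly small, e.g. by a pigeonhole/counting estimate on the number of length-$n$ windows needed to cover a long injective orbit segment, which is a purely orbit-combinatorial bound independent of $\mu$. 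Everything else (Borel measurability of all the sets involved, disjointness of $B,TB,\dots,T^{n-1}B$) is routine given Fact \ref{f.seqGenPart} and the Lusin--Souslin selection theorem \ref{thm.LusinSouslin}.
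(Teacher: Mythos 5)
There is a genuine gap in the uniformity step, and the mechanism you invoke to close it does not actually close it. You write that because $W_m^c$ decreases to a subset of the periodic points (which is almost null), a single sufficiently large $m$ can be chosen with $\mu(W_m^c)<\delta$ simultaneously for all $\mu\in\pen(T)$. But a decreasing sequence of Borel sets whose intersection has measure zero for every aperiodic ergodic measure gives only \emph{pointwise} convergence $\mu(W_m^c)\to 0$ for each fixed $\mu$; it does not give convergence uniform over $\pen(T)$. A toy counterexample to the uniformity you need: on the full $2$-shift, take $W_m^c=\{x: x_0=\cdots=x_m=0\}$; this decreases to a single point (null for every aperiodic measure), yet for the Bernoulli$(p)$ measure one has $\mu_p(W_m^c)=(1-p)^{m+1}$, which is close to $1$ for $p$ small no matter how large $m$ is. So your estimate ``uncovered mass $\le \mu(W^c)+\tfrac{n-1}{n}\cdot(\text{endpoint density})$'' cannot be pushed below $\delta$ by fixing a single $m$, because the first term has no uniform bound. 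The pigeonhole/orbit-combinatorial bound on endpoint frequency is fine, but it controls only the second term.

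The paper sidesteps this precisely by \emph{not} fixing a single partition cell or a single ``goodness'' parameter that must work for all measures at once. Instead, it proves a quantitative Claim: for any Borel $A$ with all return times to $A$ at least $N$, the set $B=\{x:\tau_A(x)\in n\NN^*\}$ has $B,TB,\dots,T^{n-1}B$ disjoint and $\mu\bigl(\mathcal O(A)\setminus\bigcup_{j<n}T^jB\bigr)\le n/N$ for every ergodic $\mu$ — a bound that is uniform because it depends only on the return-time threshold $N$, not on $\mu$. One then enumerates cells $A_1,A_2,\dots$ of refining partitions chosen so that $T^jA_k\cap A_k=\emptyset$ for $0<j<N:=[n/\delta]+1$, applies the Claim inductively to $A_k\cap X_{k-1}$ to get $B_k$, and sets $X_k:=X_{k-1}\setminus\mathcal O(B_k)$, $B:=\bigcup_k B_k$. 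The invariant sets $\mathcal O(B_k)$ partition $X$, and for each ergodic $\mu$ exactly one of them has full measure; on that piece the Claim's $n/N<\delta$ bound applies. Thus uniformity over $\pen(T)$ comes from decomposing $X$ into countably many \emph{invariant} Borel pieces, each with its own base set whose error bound is guaranteed a priori — rather than from a hoped-for uniform smallness of $\mu(W_m^c)$. You hint at this alternative (``run the standard iterative Rokhlin construction along the Borel partition''), and that is indeed the correct route, but the fixed-$m$ argument you then carry out does not work.
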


We say that $B$ is the \emph{basis} of a \emph{tower} $\bigcup_{k=0}^{p-1}T^{k}B$ of \emph{height} $n$.

\newcommand\diam{\operatorname{diam}}

\begin{proof}
We can assume that there is no periodic point. For any Borel subset $A$, let $\tau_A:X\to\NN^*$ be defined by $\tau_A(x):=\inf\{n\geq1:T^nx\in A\}$. Let  $\mathcal O(A):=\bigcup_{p\in\ZZ} T^pA$.

\medbreak

\noindent{\sc Claim.} {\it  For any $n\geq 1$, there is a Borel $B\subset\mathcal O(A)$ such that $B,TB,T^2B,\dots, T^{n-1}B$ are pairwise disjoint and for all $\mu\in\Proberg(T)$, $\mu( \mathcal O(A)\setminus\bigcup_{j=0}^{n-1}T^jB)\leq n/\min_{x\in A} \tau_A(x)$.}

\begin{proof}[Proof of the Claim]
Set $B:=\{x\in X:\tau_A(x)\in n\NN^*\}$. For all $0<j<n$, $\tau_A(T^jB)\subset n\NN^*-j$ so $B\cap T^jB=\emptyset$. Also
 $
   \mathcal O(A) \setminus \bigcup_{j=0}^{n-1}T^jB \subset \bigcup_{j=0}^{n-1} T^jA.
 $
and the frequency of visits to this set is at most $n/\min\tau_A$. The claim follows from the Birkhoff ergodic theorem.
\end{proof}

Fix a Polish distance on $X$. Let $P_1,P_2,\dots$ be finer and finer partitions of $X$ with $\max_{A\in P_k}\diam(A)\to 0$. Let $N:=[n/\delta]+1$. Let $A_1,A_2,\dots$ be an enumeration of the elements of the partitions $P_k$, $k\geq1$, such that $T^jA\cap A=\emptyset$ for all $0<j<N$. Set $X_0:=X$. We define inductively subsets $B_k,X_k$ by applying the above Claim to $A_k\cap X_{k-1}$  to  to get $B_k\subset X_{k-1}$ and we set $X_k:=X_{k-1}\setminus \mathcal O(B_k)$. Let:
 $
     B := \bigcup_{k\geq1} B_k
 $
It is Borel.

We claim that $\{\mathcal O(B_k):k\geq1\}$ is a partition of $X$. The disjointness is obvious. If there were some $x\in X\setminus\bigcup_{k\geq1} \mathcal O(B_k)$, it would belong to all $X_{k-1}$, $k\geq1$. But $x,\dots,T^{N-1}(x)$ are pairwise distinct, so there must be $k\geq1$ with $x\in A_k\cap X_{k-1}$: $x\in\mathcal O(B_k)$, a contradiction. 

Finally,
 \begin{itemize}
  \item $B\cap T^jB=\emptyset$ for all $0<j<n$ is obvious from the construction. 
  \item For any $\mu\in\pen(T)$, there is, by ergodicity, some $k\geq1$ such that $\mu(\mathcal O(B_k))=1$. Hence, $\mu(X\setminus \bigcup_{j=0}^{n-1} T^jB)= \mu( \mathcal O(B_k)\setminus\bigcup_{j=0}^{n-1}T^jB_k)$, which is at most $n/N<\delta$.
 \end{itemize}
\end{proof}

\medbreak\noindent{\bf Assumptions.} 
$s,T,N_*$ are positive integers as in Theorem \ref{t.BorelKrieger}. 
Let $\eps_k$, $k\geq1$, are positive numbers with \begin{equation}\label{e.epsk}
   \sum_{k\geq1} \eps_k< \frac{1}{4}{\delta h(S)} \text{ where }
     \delta h:=\log s/T-h(S)>0.
 \end{equation}

\subsection{First level coding}
We define the first level of coding  $\phi_1$ into $\Sigma_*:=\Sigma_{s,T,N_*}$ such that $\psi_1(x)$ determine the $P_1$-name of almost all points with enough space and flexibility left for the coding of the remaining partitions. We define:
 \begin{equation}\label{eq.g1}
g_1(x):=\frac{h_{P_1}(x)+\eps_1}{\log s/T}<1 \qquad (h_{P_1}(x) := h(S,M_x,P_1)). 
 \end{equation}

For convenience, 
$[a,b[$ also denotes the integer interval $\{a,a+1,\dots,b-1\}$. $[x]$  denotes the integer part of a real number $x$ and we set: $E_T(x):=T[x/T]$.

\begin{definition}\label{def.level1}
For $\alpha\in\Sigma_*$. Define
 $$
    S_1(\alpha):=\{k\in\ZZ:\alpha_{k+T-1}=\sepa\}.
 $$ 
If $n<m$ are two consecutive elements of $S_1(\alpha)$, $[n,m[$ is called a \emph{level $1$-interval}. Let $\ell:=m-n$ with $r,q$ integers and $0\leq r<T$, $q\geq0$. Set $\ell=r+qT$ and $\bar n:=n+T+r$. Then $[n,m[$ is divided into the following integer intervals:
 \begin{itemize}
   \item $[n,\bar n]$: the \emph{marker} positions;
   \item $[\bar n,\bar n+E_T(g_1(x)\ell)-3T[$: the \emph{level $1$-filling} positions;
   \item $[\bar n+E_T(g_1(x)\ell)-3T,\bar n+E_T(g_1(x)\ell)-2T[$: the \emph{level $1$-special} positions;
   \item the remaining: the \emph{level $1$-free} positions.
 \end{itemize}
A \emph{level $1$-modification} of $\alpha$ is a sequence $\tilde\alpha\in\mathcal A^\ZZ$ such that, for each level $1$-interval, its restriction to (i) level $1$-filling positions coincides with $\alpha$; (ii) level $1$-special positions is either $\spa^{T-1}1$ or $\spa^{T-1}2$; (iii) level $1$-free positions is a word from $\mathcal W(s,T)$.

We often write $1$-interval, $1$-filling for level $1$-interval, level $1$-filling, etc.
\end{definition}

\begin{lemma}\label{f.lengths}
The following holds for $k=1$, for each $k$-interval of length $\ell$:
 \begin{enumerate}
 \item  the numbers of $k$-filling and $k$-free positions belong to $T\ZZ$. 
 \item there is at least $\ell g_k(x)-4T$ filling positions, exactly $T$ special positions and $([\ell/T]-[g_1(x)\ell/T]+1)T>(1-g_1(x))\ell$ free positions.
 \end{enumerate}
\end{lemma}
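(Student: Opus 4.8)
The plan is to unwind Definition~\ref{def.level1} and verify the two bookkeeping claims directly. Fix a $1$-interval $[n,m[$ of length $\ell = m-n$, write $\ell = r + qT$ with $0\le r<T$ and $q\ge 0$, and recall $\bar n = n+T+r$, so the marker block $[n,\bar n[$ has length $T+r$ and the remaining block $[\bar n, m[$ has length $\ell - (T+r) = qT - T = (q-1)T$, a multiple of $T$. The level $1$-filling block is $[\bar n,\ \bar n + E_T(g_1(x)\ell) - 3T[$, which has length $E_T(g_1(x)\ell) - 3T$; since $E_T(y) = T[y/T] \in T\ZZ$, this length lies in $T\ZZ$. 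The special block has length exactly $T$ by definition. The free positions are what is left inside $[\bar n, m[$, namely $(q-1)T$ minus the filling length minus $T$; since all three of $(q-1)T$, the filling length, and $T$ are in $T\ZZ$, so is the number of free positions. This gives part (1).

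For part (2), I would estimate the three counts. The filling count is $E_T(g_1(x)\ell) - 3T = T[g_1(x)\ell/T] - 3T > T(g_1(x)\ell/T - 1) - 3T = g_1(x)\ell - 4T$, which is the claimed lower bound (here I should also note this is nonnegative for $\ell$ large, but the statement only asserts the inequality). The special count is $T$, immediate. For the free count, it equals (length of $[\bar n,m[$) $-$ (filling) $-$ (special) $= (q-1)T - (T[g_1(x)\ell/T] - 3T) - T = (q+1)T - T[g_1(x)\ell/T]$. Now $q = [\ell/T]$ when $r<T$ — more precisely $\ell = r+qT$ with $0\le r<T$ forces $[\ell/T]=q$ — so the free count is $([\ell/T] - [g_1(x)\ell/T] + 1)T$, matching the stated formula. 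Finally $([\ell/T] - [g_1(x)\ell/T] + 1)T > (\ell/T - 1 - g_1(x)\ell/T + 1)T = (1-g_1(x))\ell$, using $[a]\le a$ and $[b] > b-1$, which gives the last inequality. Note $g_1(x)<1$ by \eqref{eq.g1}, so the right-hand side is positive.

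The only subtlety — and the one place I would be careful — is checking that the block $[\bar n, m[$ really does have length divisible by $T$ and that the filling block fits inside it (i.e., that the special and free blocks are well-defined with nonnegative length). The first point follows from the computation above: $\ell - (T+r) = (q-1)T$. For the second, one needs $E_T(g_1(x)\ell) - 3T \le (q-1)T - T$, i.e., $E_T(g_1(x)\ell) \le (q-1)T$; since $g_1(x)<1$ one has $E_T(g_1(x)\ell) \le T[\ell/T] = qT$, and the stronger bound $\le (q-1)T$ holds once $\ell$ is large enough that the gap $(1-g_1(x))\ell$ exceeds $2T$ — which is exactly the regime the construction operates in (the constraint $|tw|\ge N_*$ in $\widehat{\mathcal W}$, with $N_*$ chosen large). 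I would simply remark that all these blocks are defined only for sufficiently long $1$-intervals, which is guaranteed by the constraints built into $\Sigma_*(s,T,N_*)$, and that the arithmetic identities above hold verbatim in that regime. The statement for general $k$ is identical once the analogous level-$k$ structure is set up, so I would phrase the lemma's proof to apply mutatis mutandis, deferring the inductive definition to the later section where higher levels are introduced.
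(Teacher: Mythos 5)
Your proof is correct and follows essentially the same bookkeeping as the paper's: part (1) by noting each block length is a multiple of $T$, and part (2) by the same arithmetic $\ell - (T+r) - (E_T(g_1(x)\ell)-3T) - T = ([\ell/T]-[g_1(x)\ell/T]+1)T$ and then bounding with $[\ell/T]+1 > \ell/T$ and $[g_1(x)\ell/T]\le g_1(x)\ell/T$. Your extra remark that the filling block has nonnegative length only for $\ell$ large enough (which the construction enforces via $N_*$) is a prudent observation the paper leaves implicit.
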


\begin{proof}
(1) is clear. To check (2), remark that the $1$-free positions in a given $1$-interval is the complement of marker, filling and special positions. Hence their number is:
 $$\begin{aligned}
   \ell -& (T+r+E_T(\ell g_1(x))-3T+T)  = ([\ell/T]T+r)-(r+E_T(\ell g_1(x))+T)\\
    &= ([\ell/T]-[\ell g_1(x)/T]+1)T > (\ell/T-[\ell g_1(x)/T])T \geq (1-g_1(x))\ell.
 \end{aligned}$$
\end{proof}

Recall that a Borel set $B$ to be \emph{almost completely positive} if for any $\mu\in\Pena(S)$, $\mu(B)>0$.

\begin{proposition}[Level $1$-coding map]\label{p.FirstCoding}
There exist a completely positive Bo\-rel subset $B_1\subset X$ and an almost Borel homomorphism $\psi_1$ defined on $X$ and into $\Sigma_*$ with the following properties for almost all $x\in X$:
 \begin{enumerate}
   \item\label{i.Bstar} for all $p\in\ZZ$, $S^p x\in B_1\iff p\in S_1(\psi_1(x))$;
   \item\label{i.inj} the $P_1,\ZZ$-name of $x$ depends only on symbols in $1$-filling positions and $M_x$;
   \item\label{i.free} the $1$-free positions occupy a fraction $>(1-g_1(x))$ of the interval;
   \item\label{i.last} the $1$-special positions and the $1$-free positions are  repetitions of $\spa^{T-1}1$;
   \item\label{i.change1} $\alpha$ as well as any of its $1$-modifications is an element of $\Sigma_*$.
 \end{enumerate}
\end{proposition}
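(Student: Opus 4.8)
The plan is to build $\psi_1$ by a tower construction. First I would fix a Rokhlin tower for $S$: applying Proposition~\ref{p.GWtower}, choose a completely positive Borel basis $B_1$ of a tower of height $n_1$, where $n_1$ is a large integer to be chosen. The set $B_1$ will be exactly $\psi_1(x)^{-1}$ of the marker positions (more precisely, the level $1$-intervals of $\psi_1(x)$ will be the consecutive return times to $B_1$), which forces property~\eqref{i.Bstar}. By discarding the (almost null) set where the orbit never meets $B_1$, almost every $x$ has a bi-infinite sequence of visit times to $B_1$, cutting $\ZZ$ into finite intervals of length $\geq n_1$; one should also arrange, by intersecting $B_1$ with a further Rokhlin tower or by the usual "first return at least $N_*$" trick, that every such interval has length $\geq N_*$, so that the concatenated word lands in $\Sigma_*=\Sigma_{s,T,N_*}$.

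Next, on each such interval $[n,m[$ of length $\ell$, I would fill in the symbols according to Definition~\ref{def.level1}. The filling positions must encode the $P_1$-name of the orbit segment together with the empirical measure $M_x$. Here is where Corollary~\ref{c.SMBcoding} (with $Q=\{X\}$) enters: on the good set $G_{P_1}(\eps_1,N)$, the $P_1^\ell$-atom of $x$ can be indexed by an integer $i_{P_1,\{X\},\ell}(x)\leq e^{(h_{P_1}(x)+\eps_1)\ell}$, and since $g_1(x)=(h_{P_1}(x)+\eps_1)/(\log s/T)$, this integer can be written as a word of length $\leq g_1(x)\ell/T$ over the alphabet $\{1,\dots,s\}$, i.e. a word in $\mathcal{W}(s,T)$ of length (in symbols) at most $E_T(g_1(x)\ell)$; Lemma~\ref{f.lengths}(2) guarantees there are at least $\ell g_1(x)-4T$ filling positions available, which suffices once $\ell\geq n_1$ is large compared to $T$. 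The index $i$ depends on $x$ only through $P_1^\ell(x)$ once $M_x$ is known (that is part~(2) of Corollary~\ref{c.SMBcoding}, since $M_x$ determines which $G$-set $x$ lies in), which gives property~\eqref{i.inj}: the $P_1,\ZZ$-name is recovered from the filling symbols together with $M_x$. The special and free positions are set to the constant string $\spa^{T-1}1$, giving~\eqref{i.last}, and~\eqref{i.free} is exactly Lemma~\ref{f.lengths}(2). Property~\eqref{i.change1} is then a combinatorial check: a $1$-modification only alters special positions (to $\spa^{T-1}1$ or $\spa^{T-1}2$) and free positions (to an arbitrary word of $\mathcal{W}(s,T)$), and by construction of $\Sigma_*$ as concatenations of words from $\widehat{\mathcal{W}}(s,T,N_*)$ — each of which begins with a block from $\mathcal{T}$ (the marker) followed by blocks from $\mathcal{S}$ — any such alteration again yields a legal concatenation, provided the interval lengths stay $\geq N_*$, which they do.

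Equivariance $\psi_1\circ S=\sigma\circ\psi_1$ is automatic: the tower structure, the cutting into return intervals, and the rule assigning symbols within an interval all commute with $S$ and the shift, since they depend only on the $S$-orbit of $x$ and the position relative to $B_1$. Borel measurability of $\psi_1$ follows because $B_1$, the return-time functions $\tau_{B_1}$, the maps $i_{P_1,\{X\},n}$, and $g_1$ (via Proposition~\ref{p.BorelEntropy} and the Borel SMB theorem~\ref{t.SMBborel}) are all Borel, and $\psi_1(x)_p$ is, for each fixed $p$, a Borel function of finitely much orbit data.

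\medbreak

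The main obstacle I anticipate is \textbf{bookkeeping on short intervals and the choice of parameters}. The SMB estimate $M_x(P_1^n(x))=e^{-(h_{P_1}(x)\pm\eps_1)n}$ only holds for $n\geq N$ for some $N$ depending on $\eps_1$ and the measure, and even then only off a set of small (but positive) measure; meanwhile the return intervals to $B_1$, though $\geq n_1$, are not bounded above. One must therefore choose $n_1$ large enough (relative to $T$, $N_*$, and the "slack" $\eps_1$) that: (a) $\ell g_1(x)-4T$ filling positions genuinely suffice to record an index bounded by $e^{(h_{P_1}(x)+\eps_1)\ell}$ for \emph{every} $\ell\geq n_1$ — this uses $\log s/T - h(X) > 0$ and \eqref{e.epsk} to absorb the additive $4T$ and the rounding $E_T$; and (b) the exceptional set where the SMB estimate fails is handled — typically by noting that on that set one still has the \emph{a priori} bound $h_{P_1}(x)\leq\log\#P_1$, which for $\ell$ large is still $<g_1(x)\ell\log s/T$ only if we instead fall back on coding the whole atom crudely, or by shrinking $B_1$ further so that a.e.\ orbit spends almost all its time in $G_{P_1}(\eps_1,N)$ and placing the "bad" intervals' content into free positions. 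Getting these quantifiers in the right order — choose $\eps_k$, then $T$ large (or given), then $N_*$, then $n_1$, then the Rokhlin basis — is the delicate part; the rest is routine, and the later levels of coding ($\phi_2,\phi_3,\dots$ for the remaining partitions) will be handled by iterating this construction inside the free positions.
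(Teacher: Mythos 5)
Your plan has the right shape: a Rokhlin basis $B_1$ cutting $\ZZ$ into return intervals, the SMB-derived index $i_{P_1,\{X\},\ell}$ written as an $\mathcal W(s,T)$-word into the filling positions, constant blocks $\spa^{T-1}1$ in the special and free positions, equivariance and Borelness from the construction. Properties \eqref{i.Bstar}, \eqref{i.free}, \eqref{i.last}, \eqref{i.change1} do follow as you say, and your reading of property~\eqref{i.inj} via Corollary~\ref{c.SMBcoding} (knowing $M_x$ recovers $g_1(x)$ and hence $i_{P_1,\{X\},\ell}(x)$, which determines $P_1^\ell(x)$) is correct.

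The genuine gap is in how you handle the SMB estimate on each return interval, and the workarounds you float would not close it. The crude fallback $h_{P_1}(x)\le\log\#P_1$ fails outright: nothing forces $\log\#P_1 < \log s/T$, so there is no room to code the atom directly. The other suggestion, shrinking $B_1$ so that ``a.e.\ orbit spends almost all its time in $G_{P_1}(\eps_1,N)$,'' is a \emph{statistical} property of the orbit, whereas what the coding needs is a \emph{pointwise} property of the return points: if $a<b$ are consecutive visits to $B_1$ and $\ell=b-a$, the bound $i_{P_1,\{X\},\ell}(S^ax)\le s^{[\ell g_1(x)/T]-3}$ only holds if $S^ax\in G_{P_1}(\eps_1/3,\ell)$, and there is no reason a random Rokhlin basis lands its returns in that set. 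The paper resolves this by building $B_1$ directly inside the good set (Lemma~\ref{l.firstCuts}): choose $N_1$ so large that $M_x(G_1(N_1))>9/10$, take a Glasner--Weiss tower of height $N_1$ with error $<1/10$, and then intersect a well-chosen level $S^qB$ of the tower with $G_1(N_1)$, which remains completely positive by a pigeonhole over the $N_1$ levels. Then every $x\in B_1$ lies in $G_1(N_1)\subset G_1(\ell)$ for all $\ell\ge N_1$, and the tower property forces return times $\ge N_1$, so the SMB bound holds on \emph{every} level-$1$ interval and there are no bad intervals to dispose of. There is also a small bookkeeping point you elide: $N_1$ a priori depends on the measure $M_x$; one makes it locally constant by splitting $X$ into countably many invariant Borel pieces and constructing $B_1$ on each. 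With Lemma~\ref{l.firstCuts} (or an equivalent) in hand, the rest of your plan goes through essentially as the paper's proof.
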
 

We first build the subset $B_1$. We abbreviate $G_{P_k}(\eps_k/3,n)$ to $G_k(n)$ for $k,n\geq1$.

\begin{lemma}\label{l.firstCuts}
For any invariant almost Borel function $N_0:X\to \NN$, there exists a Borel $B_1\subset X$ such that for almost all $x\in X$ : (i) $M_x(B_1)>0$; (ii) for any $n\geq1$, $x,S^n x\in B_1$ implies $n\geq N_0(x)$ and $x\in G_1(n)$.
\end{lemma}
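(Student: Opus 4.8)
The plan is to apply the Glasner--Weiss Rokhlin tower proposition (Proposition \ref{p.GWtower}) at a sequence of heights tending to infinity, choosing the bases so that the floors we keep lie inside the appropriate good sets $G_1(n)$, and then to assemble the set $B_1$ from countably many such pieces using a standard exhaustion argument (analogous to the proof of Proposition \ref{p.GWtower} itself, where a countable family of orbit-classes was combined into one Borel basis). First I would reduce to the case where there is no periodic point, so that all measures in play are aperiodic. Next, since $N_0$ is invariant and almost Borel, for each integer $m\geq 1$ the set $X^{(m)}:=\{x: N_0(x)\leq m\}$ is invariant and Borel, and $X=\bigcup_m X^{(m)}$ up to a null set; so it suffices to construct, for each $m$, a Borel $B_1^{(m)}\subset X^{(m)}$ with the desired properties relative to the restricted system $X^{(m)}$, and then set $B_1:=\bigsqcup_m B_1^{(m)}$ (the union is disjoint after shrinking the $X^{(m)}$ to be pairwise disjoint invariant Borel sets).

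For a fixed $m$, the idea is as follows. By Theorem \ref{t.SMBborel}, $\mu(G_1(N))\to 1$ as $N\to\infty$ for every ergodic $\mu$, and $h_{P_1}$ and $G_1(\cdot)$ are Borel. For each height $n\geq m$ I would apply Proposition \ref{p.GWtower} with $\delta$ small to get a tower of height $n$ whose base $C_n$ has the property that the union of floors covers all but an arbitrarily small measure. The key point is to intersect: replace $C_n$ by $C_n':=\{x\in C_n : x\in G_1(n)\}$ — but this naive intersection may no longer be a tower base, so instead I would keep only those floors $T^kC_n$ that lie in $G_1(n)$, and use that for $n$ large (depending on the measure) almost every point has this property. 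To make this uniform in $\mu$ and genuinely Borel, I would mimic the exhaustion in the proof of Proposition \ref{p.GWtower}: enumerate small clopen pieces, peel off orbit-classes on which an appropriately tall-and-good tower base can be found, and take the union of the bases. Condition (ii) — that $x, S^nx\in B_1$ forces $n\geq N_0(x)$ and $x\in G_1(n)$ — is arranged because inside a single tower of height $n$ the return time between two base points is at least $n\geq m\geq N_0(x)$ on $X^{(m)}$, and by construction the base points lie in $G_1(n)$; matching this for the tower of the appropriate height gives the conclusion. Condition (i), $M_x(B_1)>0$ for almost every $x$, follows because for each ergodic $\mu$ some floor-class has full measure, so $\mu(B_1)>0$.

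The main obstacle I expect is the bookkeeping needed to make the construction simultaneously Borel, equivariant-compatible, and valid for \emph{all} aperiodic ergodic measures at once, since the threshold $N$ beyond which $G_1(N)$ has large measure depends on the measure. This is exactly the difficulty already confronted in the proof of Proposition \ref{p.GWtower}, and the resolution is the same: work orbit-class by orbit-class via a countable generating sequence of partitions into small clopen sets, so that each ergodic measure is eventually "captured" on a single class where one can choose a tower base of arbitrarily large height contained in $G_1(n)$. Once that template is in place, the verification of (i) and (ii) is routine. A minor additional point is that $N_0$ is only \emph{almost} Borel; but redefining it arbitrarily (say to $1$) on a null invariant set makes it Borel without affecting any of the almost-everywhere statements.
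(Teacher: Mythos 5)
Your plan has the right ingredients --- the Glasner--Weiss tower (Proposition \ref{p.GWtower}), the Borel SMB sets $G_1(n)$, and splitting $X$ into countably many invariant Borel pieces to tame the measure-dependent threshold --- but the step where you combine the tower with $G_1(n)$ is where the argument breaks down. You propose to ``keep only those floors $T^kC_n$ that lie in $G_1(n)$'', but no floor of a Rokhlin tower will be contained in $G_1(n)$: the latter merely has large measure, not full measure, and it is not a union of floors. Your earlier attempt, intersecting the \emph{base} $C_n$ with $G_1(n)$, also does not fail for the reason you give. The set $C_n\cap G_1(n)$ is still disjoint from its first $n-1$ iterates (it is a subset of $C_n$); the real danger is that $M_x(C_n\cap G_1(n))$ may be zero, since the good points may all sit in other floors. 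And the fallback --- re-running the exhaustion in the proof of Proposition \ref{p.GWtower} so as to produce a base ``contained in $G_1(n)$'' --- is not easy to carry out: that exhaustion peels off pieces of a clopen partition, whereas $G_1(n)$ is a general Borel set with no reason to be compatible with it. So as written there is a genuine gap.

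The missing move is a simple pigeonhole. Set $N_1(x)$ to be the least $n\geq N_0(x)$ with $M_x(G_1(n))>9/10$; by Theorem \ref{t.SMBborel} this is finite almost everywhere, and it is an invariant Borel integer function of $x$, so after partitioning $X$ into countably many invariant Borel pieces one may take $N_1$ constant on each. On such a piece, apply Proposition \ref{p.GWtower} once, at height $N_1$ and with $\delta=1/10$, to get a base $B$. Then
\[
   M_x\Bigl(G_1(N_1)\cap\bigcup_{q=0}^{N_1-1}S^qB\Bigr)>8/10
   =\sum_{q=0}^{N_1-1}M_x\bigl(G_1(N_1)\cap S^qB\bigr),
\]
so some $q$ satisfies $M_x(G_1(N_1)\cap S^qB)>8/(10N_1)>0$; this $q$ depends only on $M_x$, hence is invariant and Borel, hence may again be taken constant after a further invariant splitting. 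Set $B_1:=G_1(N_1)\cap S^qB$. Since $B_1$ is a subset of the \emph{single} floor $S^qB$, consecutive visits to $B_1$ are at least $N_1\geq N_0(x)$ apart --- no covering property of a ``tower base'' is needed, only containment in one floor. Finally $B_1\subset G_1(N_1)\subset G_1(n)$ whenever $n\geq N_1$, because $G_1(N)$ imposes the SMB estimate for all $m\geq N$ and is therefore decreasing in $N$. This gives both (i) and (ii); your stratification by $\{N_0\leq m\}$ and the reduction to $N_0$ Borel are both fine, but the pigeonhole over floors is the step your proposal lacks and no amount of re-running the exhaustion substitutes for it.
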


\begin{proof}
For $x\in X$, we define $N_1(x)$ to be the smallest integer $n\geq N_0(x)$ such that $M_x(G_1(n))>9/10$. Theorem \ref{t.SMBborel} ensures that this is well-defined for almost all $x\in X$. $x\mapsto M_x(G_1(n))$ is an almost Borel function given $n$, so is $N_1$. $N_1(x)$ depends on $x$ only through $M_x$, so it is invariant. We can assume that it is a constant, denoted also $N_1$, by splitting $X$ into countably many invariant Borel subspaces. A subset $B_1$ for the full space will  be obtained as the union of the  subsets $B_1$ built in each of the subspaces as follows. 

Proposition \ref{p.GWtower} with $n=N_1$ and $\delta=1/10$ gives a Borel subset $B\subset X$ such that, for almost all $x\in X$, 
 $$
 M_x(B\cup\dots\cup S^{N_1-1}B)>9/10 \text{ and }
 B\cap S^kB\ne\emptyset\implies |k|\geq N_1,
 $$  
or, equivalently:
 $$
    8/10 < M_x\left(G_1(N_1)\cap\bigcup_{q=0}^{N_1-1}S^qB \right)=
    \sum_{q=0}^{N_1-1} M_x \left(G_1(N_1)\cap S^qB \right).
 $$ 
Thus, for almost all $x\in X$, there is an integer $0\leq q<N_1$ such that 
 $$
   M_x \left(G_1(N_1)\cap S^qB \right)>8/10N_1.
 $$ 
This $q$ depends only on $M_x$, so is an almost Borel and invariant function of $x$. As before, we can assume it to be constant (maybe after splitting $X$). The set $B_1:=G_1(N_1)\cap S^qB$ has the required properties.
\end{proof}

A \emph{time of visit} of $x\in X$  to $E\subset X$ is an integer $n$ such that $S^nx\in E$.

\begin{proof}[Proof of Proposition \ref{p.FirstCoding}]
Let $N>N_*:=[6\log s/\eps_1]+1$ (so $e^{2\eps_1 N/3}>s^4$) be an integer. By definition of $G_1(N)$, for all $\ell\geq N$,
 $$
    \#\{(P_1)^\ell(x):x\in G_1(N_*)\} \leq \exp\left( \ell(h_{P_1}(x)+\eps_1/3)\right)
         < s^{\ell g_1(x)/T-4}     \leq s^{[\ell g_1(x)/T]-3}.
 $$
Recall $i_{P,Q,\ell}:G_1(N)\to\NN^*$ from Corollary \ref{c.SMBcoding}:  for all $x\in G_1(N)$, $\ell\geq N$, $i_{P_1,\{X\},\ell}(x)\leq s^{[g_1(x)\ell/T]-3}$, so we can set:
 \begin{equation}\label{e.c1n}
    c_{1,\ell}(x)=c_{[g_1(x)\ell/T]-3}\circ i_{P_1,\{X\},\ell}(x) \in\mathcal W'(s,T)
 \end{equation}
where a family of injective maps $c_p:\{1,\dots,s^p\}\to \mathcal W_p(s,T)$, $p\geq1$, has been selected, depending only on $s,T,p$.

We apply Lemma \ref{l.firstCuts} to get $B_1$ with minimum return time $>N_0(x):=\max(N_*, 6\log s/\eps_1,T/(1-g_1(x))+1)$. $B_1$ being completely positive,  almost all $x$ visit it infinitely many times in the future and in the past. Hence all of $\ZZ$ is partitioned into (finite) $1$-intervals. Finally the length of each interval is at least $N_0(x)$.

To define $\psi_1(x)$ for almost all $x$, we specify $\psi_1(x)|_a^b$ for any two consecutive times $a<b$ of visit to $B_1$. Let $\ell:=b-a=qT+r$ with $q\geq0$, $0\leq r<T$. 

We set: $\sigma=1$, $u:=[\ell/T]-[g_1(x)\ell/T]+1$, and:
 \begin{equation}\label{e.FirstBlockCoding}
 \psi_1(x)|_a^b = \underbrace{\spa^{T-1}\vert\spa^r}_{\text{marker}}
       \underbrace{c_{1,\ell}(S^ax)}_{\text{1-filling}}
       \underbrace{\spa^{T-1}\sigma}_{\text{1-special}}
       \underbrace{(\spa^{T-1}1)^u}_{\text{1-free}}
 \end{equation}
To justify the annotations (marker, $1$-filling, etc.) compare with Def. \ref{def.level1}, and observe that $c_{1,\ell}(S^ax)$ and $(\spa^{T-1}1)^u$ have the right lengths  (use Lemma \ref{f.lengths}).

It is now clear that $\psi_1(x)\in\Sigma_*$. The same applies to any level $1$-modification of $\psi_1(x)$, which can only replace $\sigma$ by $2$ and $(\spa^{T-1}1)^u$ by a word from $\mathcal W(s,T)$. Claim \eqref{i.change1} is proved.

The Borel and equivariant character of $\psi_1$ are obvious from its construction. 
Claims \eqref{i.Bstar}, \eqref{i.inj}, \eqref{i.last} follow, once one observes that the decomposition of each $1$-interval into marker, filling, special and free is determined by its endpoints.
\end{proof}

\subsection{Structure of the coding}
We have just seen how to encode orbits with respect to $P_1$. We are going to do it \emph{simultaneously} with respect to all $P_k$, $k\geq1$. Obviously we cannot do these encodings independently (and get a finite entropy process), since $h_{P_k}(S,\mu)\to h(S,\mu)>0$ as $k\to\infty$. Instead we use \emph{conditional coding}: we assume that $P_1\notfiner P_2\notfiner\dots$, and remark that we only need to specify which $(P_k,I)$-name occurs given the $(P_{k-1},I)$-name for a collection of intervals $I$ covering $\ZZ$. The number of possibilities is bounded by the ratio between the measures of $(P_k,I)$-cylinders and $(P_{k-1},I)$-cylinders. We will estimate these ratios by the Shannon-McMillan-Breiman theorem in terms of the conditional entropies of $P_k$ with respect to $P_{k-1}$ for all $k\geq1$.

Of course, the  intervals $I$ above have no reason to be uniform in $k\geq1$ (they cannot be, except in very special cases). To address this, we use nested partitions of $\ZZ$ into longer and longer intervals. This  hierarchical structure will be defined by visits to a sequence of nested, completely positive sets: $B_1\supset B_2\supset\dots$, generalizing the level $1$-coding. We turn to the details.

\medbreak

Recall that positive numbers $\eps_1,\eps_2,\dots$ have been chosen small enough, see \eqref{e.epsk}. We will use the following positive Borel functions, for $k\geq1$,
 $$
     g_k(x):= \frac{h_{P_k|P_{k-1}}(S,M_x)+\eps_k+\eps_{k-1}}{\log s/T}.
 $$ 
Taking $\eps_0:=0,P_0:=\{X\}$, this is compatible with \eqref{eq.g1}.
These numbers satisfy: 
 \begin{equation}\label{e.sgk}
  \sum_{k=1}^K g_k(x) = \frac{h_{P_K}(S,M_x)+2\sum_{k=1}^{K-1} \eps_k+\eps_K}{\log s/T}\ < 1 \quad (\forall K\geq1).
 \end{equation}

\begin{definition}[Coding Structure]\label{def.codseq}
Given positive numbers $g_1,g_2,\dots$ with $\sum_{k\geq1} g_k<1$, the coding structure of a sequence $\alpha\in\Sigma_*$ is the following sequence of refining partitions of $\ZZ$ into intervals. Level $1$-intervals and their partitions into marker, $1$-filling, $1$-special, $1$-free positions, and $S_1(\alpha)$, have been defined in Def. \ref{def.level1}. For $k\geq2$, the level $k$-structure is defined inductively:
\begin{enumerate}
\item\label{cs3}  $S_k(\alpha):=\{n\in S_{k-1}(\alpha):$ the word $\spa^{T-1}2$ appears at the level $(k-1)$-special positions inside the level $(k-1)$-interval starting at $n$\};
 \item\label{cs4} $[n,m[$ is a \emph{level $k$-interval} if $n,m$ are consecutive elements of $S_k(\alpha)$. A level $j$-interval, $j<k$, contained in $[n,m[$ is a \emph{level $j$-subinterval}. $[n,m[$ is divided into
  \begin{enumerate}
    \item filling positions of each $j$-subinterval for $j<k$ and marker positions of each $1$-subinterval;
    \item the first $E_T((m-n)g_k)-T$ level $(k-1)$-free positions, called the \emph{level $k$-filling} positions;
    \item the following $T$ level $(k-1)$-free positions, called the \emph{level $k$-special} positions;
    \item the remaining $(k-1)$-free positions in $[n,m[$, called the \emph{level $k$-free} positions.
   \end{enumerate}
   \end{enumerate}
\end{definition}

\begin{figure}
\begin{center}
$$
\cdots
{\color{red}\operatornamewithlimits{\vert}_{S_3}^{n_1}}\!a^1\!\uparrow{\color{blue}b^1_1}\operatornamewithlimits{\vert}_{S_1}^{n_2}\!a^2\!\!\times{\color{blue}b^1_2\!\uparrow}{\color{red}c_1}\operatornamewithlimits{\vert}_{S_1}^{n_3}\!a^3\!\!\!\times {\color{red}c_2\!\times}{\color{green}\cdots}{\color{blue}\operatornamewithlimits{\vert}_{S_2}^{n_4}}\!a^4\!\!\uparrow{\color{blue}b^2_1}\operatornamewithlimits{\vert}_{S_1}^{n_5}\!a^5\!\times \uparrow{\color{blue}b^2_2}\!\times{\color{blue}\operatornamewithlimits{\vert}_{S_2}^{n_6}}\!a^6\!\uparrow {\color{blue} b^3\times}{\color{green}\cdots}
{\color{red}\operatornamewithlimits{\vert}_{S_3}^{n_7}}\!a^7\!\uparrow 
{\color{blue}\cdots}
$$
\caption{The above is a segment of a coding sequence as in Def. \ref{def.codseq}.
The symbols $\uparrow,\times$ or $\vert$ stand for the blocks $\spa^{T-1}2$, $\spa^{T-1}1$, or $\spa^{T-1}\vert\spa^t$, $0\leq t\leq T-1$. The letters $a^\cdot,b_\cdot^\cdot,c_\cdot^\cdot$ stand for words from $\mathcal W_q(s,T)$, $q\geq1$.  The filling and special positions are in colors (black, blue, red) corresponding to their level (except for the symbols $|$ which belong to the $1$-filling, but are colored according to the maximum $k$ such that their positions belong to $S_k$ and the level $3$-free positions which are in green. For instance $n_4\in S_2\setminus S_3$ is the left endpoint of a level $2$-interval indexed by the word $b^2_1b^2_2$ and finishing at $n_6\in S_2$. }\label{fig.coding}
\end{center}
\end{figure}

\begin{definition}
A \emph{level $k$-modification} of $\alpha\in\Sigma_*$ is  a sequence $\tilde\alpha\in\mathcal A^\ZZ$ such that, for each $k$-interval, the restriction of $\tilde\alpha$ to (i) marker or $j$-filling positions for $j\leq k$ or $j$-special positions for $j<k$ coincides with that of $\alpha$; (ii) $k$-special positions are either $\spa^{T-1}1$ or $\spa^{T-1}2$; (iii) $k$-free positions is a word from $\mathcal W(s,T)$.
\end{definition}

A straightforward induction gives:

\begin{lemma}\label{l.modifS}
If $\beta$ is a $k$-modification of $\alpha$, then $S_k(\beta)=S_k(\alpha)$.
\end{lemma}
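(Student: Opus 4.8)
The plan is to prove Lemma \ref{l.modifS} by induction on $k$, using the definition of the coding structure (Def. \ref{def.codseq}) and the way a $k$-modification is constrained (it only touches $j$-special positions for $j\ge k$ and $j$-free positions for $j\ge k$, leaving all marker positions, all $j$-filling positions, and all $j$-special positions for $j<k$ untouched).

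For the base case $k=1$: the set $S_1(\alpha)$ is defined purely in terms of the positions carrying the symbol $\sepa$, i.e. the positions $n$ with $\alpha_{n+T-1}=\sepa$. A $1$-modification of $\alpha$ agrees with $\alpha$ on all marker positions and all $1$-filling positions, and on the remaining ($1$-special and $1$-free) positions it writes only words from $\mathcal W(s,T)\cup\{\spa^{T-1}1,\spa^{T-1}2\}$, none of which contains the symbol $\sepa$. Hence the set of positions carrying $\sepa$ is unchanged, so $S_1(\beta)=S_1(\alpha)$; in particular the level $1$-intervals and their subdivisions into marker/$1$-filling/$1$-special/$1$-free positions coincide for $\alpha$ and $\beta$ (these depend only on the endpoints, which are the consecutive elements of $S_1$).

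For the inductive step, assume $S_{k-1}(\beta)=S_{k-1}(\alpha)$ together with the agreement of the full level $(k-1)$-structure (partition into level $(j)$-intervals and their positions for $j\le k-1$). A $k$-modification $\beta$ of $\alpha$ agrees with $\alpha$ on all marker, all $j$-filling ($j\le k$) and all $j$-special positions for $j<k$; in particular it agrees on the level $(k-1)$-special positions. Now $S_k(\alpha)$ consists of those $n\in S_{k-1}(\alpha)$ such that $\spa^{T-1}2$ occupies the level $(k-1)$-special positions of the level $(k-1)$-interval beginning at $n$. Since $S_{k-1}$, the level $(k-1)$-interval structure, and the contents of the $(k-1)$-special positions are all the same for $\alpha$ and $\beta$, the defining condition selects the same $n$'s, so $S_k(\beta)=S_k(\alpha)$. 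This then forces the level $k$-intervals to coincide, and their subdivision into the various positions (item \eqref{cs4} of Def. \ref{def.codseq}) is determined by the endpoints together with the already-coinciding level $(k-1)$-structure and the value $g_k$; one must note $g_k$ is the same since a $k$-modification keeps $M_x$ fixed (it only alters symbols, not the empirical measure) — or more simply, the statement is about sequences and $g_k$ is an external parameter of the coding structure, fixed throughout. This closes the induction.

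The only subtle point — and the place where one must be careful rather than where there is a genuine obstacle — is bookkeeping: one should strengthen the induction hypothesis from "$S_k(\beta)=S_k(\alpha)$" to "the entire level $k$-coding structure of $\beta$ equals that of $\alpha$", since the definition of $S_{k}$ refers to the level $(k-1)$-interval starting at a given point and to its $(k-1)$-special positions, which are data of the level $(k-1)$-structure, not just of the set $S_{k-1}$. With this strengthening the induction is genuinely "straightforward," as claimed; the argument is a direct unwinding of Def. \ref{def.codseq} and the definition of $k$-modification, with the key observation at each level being that the symbol $\sepa$ (for $k=1$) and the $(k-1)$-special block (for $k\ge2$) lie in positions that a $k$-modification is not permitted to change.
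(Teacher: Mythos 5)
Your proof is correct and is exactly the ``straightforward induction'' the paper alludes to (the paper gives no details beyond that phrase). The strengthening of the induction hypothesis to ``the entire level-$j$ coding structure coincides'' for $j\le k$ is indeed necessary, since Def.\ \ref{def.codseq} defines $S_j$ via the $(j-1)$-interval structure and the contents of the $(j-1)$-special positions, and your observation that $g_j$ is an external parameter of the coding structure (not re-derived from the sequence) cleanly closes the one potential gap.
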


\begin{definition}
A \emph{synchronized $T$-block} in $[n,m[$ with respect to some $\alpha\in\Sigma_*$ is an interval $[p'-(a+1)T,p'-aT[\subset]p,p'[$ where $a\in\NN$ and $[p,p'[$ is some $1$-interval contained in $[n,m[$.
\end{definition}

\subsection{Coding Map}

\begin{proposition}[All-level coding]\label{p.kCoding}
There exist completely positive Borel sets $B_1\supset B_2\supset\dots$ and almost Borel homomorphisms $\psi_1,\psi_2,\dots:X\to\Sigma_*$ with the following properties. For almost all $x\in X$, let $g_k:=g_k(x)$ and $\alpha^k:=\psi_k(x)$ for each $k\geq1$ and consider the coding structure from Def. \ref{def.codseq}. For all $j=1,\dots,k$ and almost all $x\in X$:

 \begin{enumerate}
  \item\label{i.inductive} $\alpha^k$ is a $j$-modification of $\alpha^j$. More precisely $\alpha^k$ and $\alpha^{k-1}$ only differ at $k$-filling and $(k-1)$-special positions;
  \item\label{i.BjRec} $n\in S_j(\alpha^k)\iff S^nx\in B_j$ and  $-\inf S_j(\alpha)=\sup S_j(\alpha)=\infty$.\\
\textcolor{white}{.}\hskip-1.5cm For each $k$-interval $[n,m[$ in $\alpha^k$:
  \item\label{i.kSpecial} the $(k-1)$-special positions are occupied in $\alpha^k$ by $\spa^{T-1}2$ in the first $(k-1)$-subinterval and by $\spa^{T-1}1$ in the other $(k-1)$-subintervals;
  \item\label{i.PkName}  the restriction $\alpha$ to the $k$-filling positions in $[n,m[$ is  the word $c_{k,m-n}(S^nx)$. Moreover, this word determines the $P_k,[n,m[$-name of $x$ given its $P_{k-1},\ZZ$-name; 
  \item\label{i.kFree} the number of $k$-free positions in $[n,m[$ is greater than $(m-n)(1-g_1(x)-\dots-g_k(x))$. Moreover the set of those positions is a disjoint union of synchronized $T$-blocks, each one carrying the word $\spa^{T-1}1$;
  \item\label{i.kChange} $\alpha^k$ and any of its $k$-modification belong to $\Sigma_*$.
\end{enumerate}

\end{proposition}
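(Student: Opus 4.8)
The plan is to construct the sets $B_k$ and the maps $\psi_k$ by induction on $k\ge 1$, the base case $k=1$ being Proposition~\ref{p.FirstCoding} (note that $g_1$ of \eqref{eq.g1} is the $k=1$ instance of $g_k$, taking $\eps_0=0$, $P_0=\{X\}$). Assuming completely positive Borel sets $B_1\supset\cdots\supset B_{k-1}$ and almost Borel homomorphisms $\psi_1,\dots,\psi_{k-1}:X\to\Sigma_*$ satisfying \eqref{i.inductive}--\eqref{i.kChange} at level $k-1$, I would first produce a completely positive Borel set $B_k\subset B_{k-1}$ whose consecutive-visit orbit segments are long and lie in the Shannon--McMillan--Breiman good sets for both $P_k$ and $P_{k-1}$. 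For this I would mimic Lemma~\ref{l.firstCuts}, but working inside the induced Borel system $(B_{k-1},S_{B_{k-1}})$ (the first-return map of $S$ to $B_{k-1}$): given an invariant almost Borel $N_0:X\to\NN$ to be fixed large below, Theorem~\ref{t.SMBborel} provides an invariant threshold $N_k\ge N_0$ with $M_x(G_k(N_k)\cap G_{P_{k-1}}(\eps_k/3,N_k))>9/10$, which may be assumed constant after splitting $X$ into countably many invariant Borel subsystems; applying the Glasner--Weiss tower (Proposition~\ref{p.GWtower}) in the induced system and intersecting an appropriate iterate of its base with $G_k(N_k)\cap G_{P_{k-1}}(\eps_k/3,N_k)$ yields $B_k$. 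Since each $B_{k-1}$-return time is $\ge 1$, the return time of $x$ to $B_k$ in $X$ is $\ge N_k\ge N_0(x)$, and as $G_P(\eps,\cdot)$ is decreasing the corresponding orbit segment lies in $G_k(\ell)\cap G_{P_{k-1}}(\eps_k/3,\ell)$, $\ell$ its length; moreover $B_k\subset B_{k-1}$ makes every $B_k$-visit a $B_{k-1}$-visit, so complete positivity and Birkhoff's theorem give $-\inf S_j(\alpha)=\sup S_j(\alpha)=\infty$ and, with the level $k-1$ hypothesis, half of \eqref{i.BjRec}.

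Next I would fix $N_0$ large enough that on every level $k$-interval $[n,m[$ (writing $\ell=m-n$, $g_j=g_j(x)$) the inherited level $(k-1)$-structure carries more than $\ell(1-g_1-\cdots-g_{k-1})$ level $(k-1)$-free positions, the first $E_T(\ell g_k)$ of which form complete synchronized $T$-blocks (possible since $1-g_1-\cdots-g_{k-1}>g_k$ by \eqref{e.sgk}), so that the resulting $E_T(\ell g_k)-T$ level $k$-filling positions leave room for $s^{[\ell g_k/T]-1}$ codewords, and moreover $s^{[\ell g_k/T]-1}\ge e^{(h_{P_k|P_{k-1}}(x)+\eps_k/3)\ell}$. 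This last inequality is where the entropy gap is used: since $g_k\cdot(\log s/T)=h_{P_k|P_{k-1}}(x)+\eps_k+\eps_{k-1}$, one gets $[\ell g_k/T]\log s>(h_{P_k|P_{k-1}}(x)+\eps_k/3)\ell$ for $\ell$ large, uniformly in $x$. With injective maps $c_p:\{1,\dots,s^p\}\to\mathcal W_p(s,T)$ fixed as in \eqref{e.c1n}, I would then set $c_{k,\ell}(y):=c_{[\ell g_k(y)/T]-1}\circ i_{P_k,P_{k-1},\ell}(y)$ for $y\in G_k(\ell)\cap G_{P_{k-1}}(\eps_k/3,\ell)$; it is well defined by Corollary~\ref{c.SMBcoding}(1) ($P_k$ finer than $P_{k-1}$), and by Corollary~\ref{c.SMBcoding}(2) the word $c_{k,\ell}(y)$ determines the $P_k,[n,m[$-name of $y$ given its $P_{k-1},[n,m[$-name.

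Then I would define $\psi_k$: for almost every $x$, declare $S_k(\psi_k(x))$ to be the set of visit times of $x$ to $B_k$, and on each level $k$-interval $[n,m[$ obtain $\psi_k(x)$ from $\psi_{k-1}(x)$ by writing $c_{k,m-n}(S^n x)$ on the level $k$-filling positions, writing $\spa^{T-1}2$ on the level $(k-1)$-special positions of the first level $(k-1)$-subinterval of $[n,m[$ (those of the other subintervals already reading $\spa^{T-1}1$ by \eqref{i.kSpecial} at level $k-1$), and writing $\spa^{T-1}1$ on the level $k$-special and all level $k$-free positions, keeping all marker, level $j$-filling for $j\le k$ and level $j$-special for $j<k-1$ positions. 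Then $\alpha^k:=\psi_k(x)$ is a level $(k-1)$-modification of $\alpha^{k-1}$ differing only at level $k$-filling and level $(k-1)$-special positions, and $\psi_k$ is Borel and $S$-equivariant because $M$, the good sets of Theorem~\ref{t.SMBborel}, the functions $i_{P_k,P_{k-1},\ell}$ and the level $k$-interval structure are. It then remains to check \eqref{i.inductive}--\eqref{i.kChange}: property \eqref{i.inductive} and the coincidence of the level $j$-structures of $\alpha^k$ and $\alpha^{k-1}$ for $j<k$ come from the inductive hypothesis together with Lemma~\ref{l.modifS}, and, combined with \eqref{i.BjRec} at level $k-1$, the placement of the $\spa^{T-1}2$, and $B_k\subset B_{k-1}$, this yields \eqref{i.BjRec} at level $k$ and \eqref{i.kSpecial}; claims \eqref{i.PkName} are exactly the definition of $c_{k,m-n}$ and Corollary~\ref{c.SMBcoding}(2); claim \eqref{i.kFree} is the level $k$ analogue of Lemma~\ref{f.lengths}, since removing $E_T(\ell g_k)-T$ filling and $T$ special positions from the $>\ell(1-g_1-\cdots-g_{k-1})$ many $(k-1)$-free positions leaves $>\ell(1-g_1-\cdots-g_k)$ positions, and what remains of complete synchronized $T$-blocks after deleting an initial run of whole blocks is again a disjoint union of synchronized $T$-blocks, each now reading $\spa^{T-1}1$; and finally, since $S_1(\alpha^k)=S_1(\alpha^1)$, every level $1$-interval of $\alpha^k$ reads $t\,w$ with $t\in\mathcal T(s,T)$ its unchanged marker and $w$ a concatenation of symbols from $\mathcal S(s,T)$ of length $\ge N_0\ge N_*$, while a level $k$-modification only rewrites level $k$-special and level $k$-free positions --- all of them level $1$-free --- again by symbols from $\mathcal S(s,T)$, so $\alpha^k$ and all its level $k$-modifications lie in $\Sigma_*$, giving \eqref{i.kChange}.

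The hard part will be the construction of $B_k$: one must make the $B_k$ nested and completely positive, force their return times large enough that $s^{[\ell g_k/T]-1}\ge e^{(h_{P_k|P_{k-1}}+\eps_k/3)\ell}$ holds for \emph{all} occurring lengths $\ell$, and at the same time drive the return orbit segments into the merely almost-everywhere Shannon--McMillan--Breiman good sets $G_k\cap G_{P_{k-1}}$ --- which is precisely what forces the passage to the induced system on $B_{k-1}$ and the reduction of the thresholds $N_k$ to constants via a countable invariant Borel splitting of $X$. Once $B_k$ is available, defining $\psi_k$ and verifying \eqref{i.inductive}--\eqref{i.kChange} is essentially bookkeeping, powered by the entropy inequality through the choice \eqref{e.epsk} of the $\eps_j$ and by the stability of the coding structure under modifications (Lemma~\ref{l.modifS}).
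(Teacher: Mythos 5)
Your overall strategy matches the paper's: build nested completely positive sets $B_1\supset B_2\supset\dots$ with long return times forcing visits into the Shannon--McMillan--Breiman good sets for the relevant pairs $(P_k,P_{k-1})$, then inductively obtain $\alpha^k=\psi_k(x)$ from $\alpha^{k-1}$ by writing $\spa^{T-1}2$ at the $(k-1)$-special positions of the first $(k-1)$-subinterval of each $k$-interval and the codeword $c_{k,m-n}(S^n x)$ at the $k$-filling positions, with the rest of the bookkeeping powered by Lemma~\ref{l.modifS} and the length estimates of Lemma~\ref{f.lengths}.\ Your verification of items \eqref{i.inductive}--\eqref{i.kChange} is correct and follows the paper's lines. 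The main technical variation is that you propose running the Glasner--Weiss tower in the induced system $(B_{k-1},S_{B_{k-1}})$, whereas the paper's Lemma~\ref{l.secondCuts} applies Proposition~\ref{p.GWtower} directly in $X$ and intersects $B_k\cap G_{k+1}(N_{k+1})\cap S^q(B)$; both are viable, and the use of the induced system is fine since $B_{k-1}$ is completely positive, its induced invariant measures are the normalized restrictions, and a lower bound on the $S_{B_{k-1}}$-return time is a lower bound on the $S$-return time.

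There is, however, a genuine flaw in how you choose the threshold $N_k$. You ask for $M_x\bigl(G_k(N_k)\cap G_{P_{k-1}}(\eps_k/3,N_k)\bigr)>9/10$, i.e.\ the good set has \emph{full-system} measure close to $1$; but what you actually need, for the intersection of an iterate of the tower base with the good set to be completely positive, is that the good set cover most of $B_{k-1}$ in the \emph{induced} measure $\mu(\cdot\cap B_{k-1})/\mu(B_{k-1})$. When $M_x(B_{k-1})<1/10$ (which is the typical situation, as the $B_k$ shrink at each step), the bound $M_x(X\setminus G_k\cap G_{P_{k-1}})<1/10$ allows $G_k\cap G_{P_{k-1}}\cap B_{k-1}$ to be a null set or even empty, so your $B_k$ need not be completely positive and the induction breaks. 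This is exactly why the paper's Lemma~\ref{l.secondCuts} defines $N_{k+1}(x)$ by the condition $M_x(G_{k+1}(n))>1-M_x(B_k)/10$, tying the error allowance to the size of the previous set. The fix is straightforward in your framework: require $M_x(X\setminus(G_k(N_k)\cap G_{P_{k-1}}(\cdot,N_k)))<M_x(B_{k-1})/10$, equivalently that the good set has induced measure $>9/10$ in $B_{k-1}$; with that one change your construction of $B_k$ is correct and the rest of the argument goes through. (As a side remark, $G_P(\eps,\cdot)$ is increasing, not decreasing, in its second argument --- fortunately the inclusion $G_P(\eps,N)\subset G_P(\eps,\ell)$ for $\ell\geq N$ is what you need --- and your use of $\eps_k/3$ rather than $\eps_{k-1}/3$ in the $P_{k-1}$-good set is a harmless variant that still gives the needed capacity estimate.)
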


This somewhat technical statement (useful for its proof by induction) will yield:

\begin{corollary}\label{c.Eembedding}
There exists an almost Borel homomorphism $\psi:X\to\Sigma_*$ with the following injectivity property. There is a null set $X_0$ such that, for all $x,y\in X\setminus X_0$, if $M_x=M_y$ and $\psi(x)=\psi(y)$, then $x=y$.
\end{corollary}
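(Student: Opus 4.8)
The plan is to define $\psi$ as the coordinatewise limit of the maps $\psi_k$ of Proposition~\ref{p.kCoding}, to check that this limit exists and is an almost Borel homomorphism into $\Sigma_*$, and then to recover the $P_k,\ZZ$-name of $x$ from the pair $(\psi(x),M_x)$ for every $k\geq1$; the injectivity statement will then be immediate since the partitions $P_k$ separate points (Fact~\ref{f.seqGenPart}).

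First I would fix $x$ outside the countable union $X_0$ of the null sets on which some conclusion of Proposition~\ref{p.kCoding} fails, write $\alpha^k:=\psi_k(x)$, and argue that for every position $n\in\ZZ$ the sequence $(\alpha^k_n)_{k\geq1}$ is eventually constant. Indeed, the coding structures of $\alpha^1,\alpha^2,\dots$ coincide at all levels $<k$ (by Lemma~\ref{l.modifS} and \eqref{i.inductive}), and a straightforward induction on the levels shows that $n$ is ultimately a marker position, or a $j$-filling position for a unique $j$, or a $j$-special position for a unique $j$, or else a $k$-free position for every $k$. In the first three cases $\alpha^k_n$ can only change when $n$ first becomes a $k$-filling position or when a $(k-1)$-special position gets assigned, by the precise form of \eqref{i.inductive}, so $(\alpha^k_n)_k$ stabilizes; in the last case $n$ stays inside a synchronized $T$-block on which $\alpha^k$ reads $\spa^{T-1}1$ by \eqref{i.kFree}, so $(\alpha^k_n)_k$ is even constant. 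Hence $\psi(x):=\lim_{k\to\infty}\psi_k(x)$ exists in $\mathcal A^\ZZ$; since each $\psi_k(x)\in\Sigma_*$ by \eqref{i.kChange} and the defining conditions of $\Sigma_*$ — which constrain only the positions of the symbols $\sepa$ and the finite blocks between them — are inherited under coordinatewise limits, also $\psi(x)\in\Sigma_*$. The resulting $\psi$ is defined off the null set $X_0$, is Borel because each of its coordinates is a pointwise limit of Borel functions, and satisfies $\psi\circ S=\sigma\circ\psi$ because each $\psi_k$ does.

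Next I would show that $(\psi(x),M_x)$ determines the $P_k,\ZZ$-name of $x$ for every $k$. The key remark is that $\psi(x)$ retains all the information of the full sequence $(\psi_k(x))_k$ used in \eqref{i.PkName}: iterating \eqref{i.inductive}, the limit $\psi(x)$ agrees with $\psi_j(x)$ at every $j$-filling position — where it therefore carries the word $c_{j,m-n}(S^n x)$ — and agrees with $\psi_J(x)$ for all large $J$ at every $(j-1)$-special position — where it carries $\spa^{T-1}2$ or $\spa^{T-1}1$ according to \eqref{i.kSpecial}. Since $M_x$ determines all the functions $g_j(\cdot)$, I proceed by induction on $k$: assuming the level-$(k-1)$ coding structure and the $P_{k-1},\ZZ$-name of $x$ have been recovered, one reads $S_k(\psi(x))$ off the $(k-1)$-special symbols of $\psi(x)$, uses the $g_j$'s to cut out the level-$k$ structure from Definition~\ref{def.codseq}, and for each $k$-interval $[n,m[$ — these tile $\ZZ$ by \eqref{i.BjRec} — reads the $k$-filling word $c_{k,m-n}(S^n x)$, which by \eqref{i.PkName} together with the $P_{k-1},\ZZ$-name determines the $P_k,[n,m[$-name; assembling over all $k$-intervals yields the $P_k,\ZZ$-name. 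The base case $k=1$ is the same computation with Definition~\ref{def.level1} and \eqref{e.FirstBlockCoding} in place of Definition~\ref{def.codseq}: here the $1$-filling word is $c_{1,\ell}(S^a x)=c_{[g_1(x)\ell/T]-3}\circ i_{P_1,\{X\},\ell}(S^a x)$ from \eqref{e.c1n}, where $c_p$ is injective by construction and $i_{P_1,\{X\},\ell}(y)$ determines $P_1^\ell(y)$ by Corollary~\ref{c.SMBcoding}(2) (with $Q=\{X\}$), so one recovers the $P_1,[a,b[$-name directly.

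Finally, for $x,y\in X\setminus X_0$ with $M_x=M_y$ and $\psi(x)=\psi(y)$, the construction gives $P_k(S^\ell x)=P_k(S^\ell y)$ for all $k\geq1$ and $\ell\in\ZZ$, hence $x=y$ by Fact~\ref{f.seqGenPart}. The only genuine difficulty I anticipate is the bookkeeping in the third paragraph above — checking that no information needed in \eqref{i.PkName} is lost when one passes from the whole sequence $(\psi_k(x))_k$ to the single limit $\psi(x)$ — which is precisely what the clause ``$\alpha^k$ and $\alpha^{k-1}$ differ only at $k$-filling and $(k-1)$-special positions'' in \eqref{i.inductive} is designed to make work; everything else is a routine unwinding of Definitions~\ref{def.level1} and \ref{def.codseq}.
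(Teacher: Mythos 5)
Your proposal is correct and follows essentially the same route as the paper: define $\psi$ as the coordinatewise limit of the $\psi_k$, observe that each coordinate stabilizes because a given position can be newly touched (as a $(k-1)$-special or $k$-filling position) only once, verify $\psi(x)\in\Sigma_*$, and derive injectivity from the separating sequence of partitions via Claim~\eqref{i.PkName}. One small caution on phrasing: $\Sigma_*$ is \emph{not} closed under arbitrary coordinatewise limits (the set of $\sepa$-positions could thin out in a general limit), so the clause ``are inherited under coordinatewise limits'' is not a correct general principle; what actually saves you — and what the paper says more directly — is that $S_1(\psi_k(x))$ is the same for all $k$, so on each fixed $1$-interval $[a,b[$ the limit equals $\psi_K(x)|_{[a,b[}$ for some finite $K$ and is therefore a legitimate word of $\widehat{\mathcal W}(s,T,N)$; since the $1$-intervals tile $\ZZ$, $\psi(x)\in\Sigma_*$. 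With that local repair, everything else in your argument is sound and matches the paper, merely with more explicit bookkeeping in the recovery of the $P_k$-names.
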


\begin{proof}[Proof of Corollary \ref{c.Eembedding}]
To define $\psi:X\to\Sigma_*$, we restrict to a full set $X_1$ on which all $\psi_k(x)$ are well-defined and belongs to $\Sigma_*$.

Let $p\in\ZZ$ and $x\in X_1$. We claim that $k\mapsto\psi_k(x)|_p$ is  constant for $k\geq k(x,p)$ for some integer $k(x,p)$. Indeed, $\psi_{k+1}(x)|_p\ne\psi_k(x)|_p$ implies that $p$ is $(k-1)$-special or a $k$-filling. But this can happen only once for a given $p$. Thus we can set $\psi(x)|_p:=\psi_{k(x,p)}(x)|_p$.

To see that $\psi(x)\in\Sigma_*$, consider any $1$-interval $[a,b[$ defined by $\psi_1(x)$. Now, $\psi(x)|[a,b[=\psi_K(x)|[a,b[$ for $K=\max_{a\leq p<b} k(x,p)<\infty$. It follows that $\psi(x)|[a,b[\in\widehat{\mathcal W}(s,T,N)$. As intervals such as $[a,b[$ form a partition of $\ZZ$, it follows that $\psi(x)\in\Sigma_*$. The map $\psi:X'\to\Sigma_*$ is well-defined.
Each $\psi_k$ being Borel and equivariant so is $\psi$.
 
To prove the last assertion, observe that $\psi(x)$ is a $k$-modification of $\psi_k(x)$. Hence $\psi(x)$ together with $M_x$ determines the $P_k,\ZZ$-name of $x$ (recall Claim \eqref{i.PkName} in Proposition \ref{p.kCoding}. As the sequence of partitions $(P_k)_{k\geq1}$ separates all the points, the announced injectivity property holds.
\end{proof}

We first build the nested subsets $B_1\supset B_2\supset B_3\supset\dots$.

\begin{lemma}\label{l.secondCuts}
There exist complete positive subsets $B_1\supset B_2\supset B_3\supset \dots$ such that for all $x\in X$ and positive integers $n,k$:
 \begin{equation}\label{eq.BG}
 x,S^nx\in B_k\implies x\in G_k(n)\cap G_{k-1}(n)\text{ and }n\geq \bar N_k(x)
\end{equation}
for any sequence of Borel integer functions $\bar N_k$, that may depend on $B_1,\dots,B_{k-1}$ {\rm(}$G_0(n):=X$ by convention{\rm)}.
\end{lemma}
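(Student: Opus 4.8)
The plan is to construct the sets $B_k$ by induction on $k$, mimicking the proof of Lemma \ref{l.firstCuts} but at each stage replacing the generic set $B$ coming from a Rokhlin tower by its intersection with the relevant good sets from the Shannon--McMillan--Breiman theorem. So the induction hypothesis after step $k-1$ gives completely positive Borel sets $B_1\supset\dots\supset B_{k-1}$ with the stated property \eqref{eq.BG} for indices $\leq k-1$, and then we produce $B_k\subset B_{k-1}$.

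First, for the inductive step I would fix $x$ and define $N_k(x)$ to be the smallest integer $n$ with $n\geq\bar N_k(x)$ (allowed to depend on $B_1,\dots,B_{k-1}$, as in the statement) and $M_x\bigl(G_k(n)\cap G_{k-1}(n)\bigr)>9/10$. By Theorem \ref{t.SMBborel} (applied to $P_k$ and to $P_{k-1}$, each having finite entropy) this is well-defined for almost every $x$, it is an almost Borel function, and it depends on $x$ only through $M_x$, hence is invariant. As in Lemma \ref{l.firstCuts} we split $X$ into countably many invariant Borel pieces on which $N_k$ is constant, build $B_k$ separately on each and take the union; so assume $N_k$ is a constant $N_k$. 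Now apply the Glasner--Weiss tower, Proposition \ref{p.GWtower}, \emph{inside the subsystem} $\bigcup_{p\in\ZZ}S^p B_{k-1}$ — or equivalently use the Rokhlin construction within the ergodic decomposition restricted to measures giving $B_{k-1}$ full measure — with height $N_k$ and $\delta=1/10$, getting a Borel $B'\subset B_{k-1}$ with $B',SB',\dots,S^{N_k-1}B'$ pairwise disjoint and $M_x(\bigcup_{q<N_k}S^qB')>9/10$ for almost every $x$ with $M_x(B_{k-1})=1$. Then, exactly as in Lemma \ref{l.firstCuts}, for almost every such $x$ there is $0\leq q<N_k$, depending only on $M_x$ and hence invariant and constant after a further splitting, with $M_x\bigl(G_k(N_k)\cap G_{k-1}(N_k)\cap S^qB'\bigr)>8/(10N_k)$, and we set $B_k:=G_k(N_k)\cap G_{k-1}(N_k)\cap S^qB'$.

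It then remains to verify the two required properties. Complete positivity of $B_k$ for \emph{all} $\mu\in\pen(S)$: the splitting of $X$ reduces to the case where $M_x$ is $\mu$-a.e. constant equal to some fixed $\nu$; if $\nu(B_{k-1})=1$ the construction gives $\nu(B_k)\geq M_x(B_k)>8/(10N_k)>0$; one checks the finitely many ``bad'' pieces of the splitting have been discarded only on null sets, so this holds for all $\mu\in\pen(S)$, giving a genuinely completely positive Borel set. The implication \eqref{eq.BG}: if $x,S^nx\in B_k$ with $n\geq1$, then $x,S^nx\in S^qB'$ with $B'$ a tower base of height $N_k$, which forces $n\geq N_k\geq\bar N_k(x)$; and $x\in B_k\subset G_k(N_k)\cap G_{k-1}(N_k)$, and since the good sets $G_P(\eps,\cdot)$ are \emph{increasing} in their first integer argument (a point good from time $N$ on is good from time $n\geq N$ on, but here we need it the other way) — more precisely, $G_P(\eps,N)\subset G_P(\eps,n)$ for $n\geq N$, and $n\geq N_k$, so $x\in G_k(n)\cap G_{k-1}(n)$, as required. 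Finally $B_k\subset B_{k-1}$ is automatic from $B'\subset B_{k-1}$ and $S$-invariance is not needed since we work with the actual sets, not their orbits.

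The main obstacle I expect is the bookkeeping around running the Glasner--Weiss tower \emph{relative to} $B_{k-1}$: Proposition \ref{p.GWtower} is stated for a Borel system, and one must either restrict cleanly to the invariant Borel set $\mathcal O(B_{k-1})=\bigcup_{p}S^pB_{k-1}$ (which is a standard Borel subsystem by Proposition \ref{p.stdBorelCons}) and apply the proposition there, or re-run the proof of Proposition \ref{p.GWtower} keeping $B\subset B_{k-1}$ throughout — the latter is what the nested structure really demands, and one should check that the Claim in the proof of Proposition \ref{p.GWtower} can be applied with all first-return sets $A_i$ chosen inside $B_{k-1}$. Everything else is a routine repetition of Lemma \ref{l.firstCuts}, the only added ingredients being the extra factor $G_{k-1}(n)$ in the intersection and the freedom to let $\bar N_k$ depend on the previously constructed $B_1,\dots,B_{k-1}$, both of which cost nothing in the argument.
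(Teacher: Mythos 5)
Your overall strategy (induction, Shannon--McMillan--Breiman good sets, Glasner--Weiss tower, splitting into invariant pieces to make invariant Borel functions constant) matches the paper's, but there is a genuine gap in the nesting step. You set $B_k:=G_k(N_k)\cap G_{k-1}(N_k)\cap S^qB'$ and then assert that $B_k\subset B_{k-1}$ ``is automatic from $B'\subset B_{k-1}$.'' It is not: $B_k\subset S^qB'$, and $S^qB'$ is not contained in $B_{k-1}$ for $q\ne0$, since $B_{k-1}$ is only a positive-measure Borel set, not $S$-invariant. Moreover, even the preliminary claim that one can run the Glasner--Weiss construction so as to obtain a base $B'$ contained in $B_{k-1}$ does not follow from restricting Proposition \ref{p.GWtower} to the invariant set $\mathcal O(B_{k-1})$: in the proof of that proposition the tower base is the set $\{x:\tau_A(x)\in n\NN^*\}$, which is defined by first-return times to $A$ and is not contained in $A$ even when $A\subset B_{k-1}$.

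The paper sidesteps both problems by not trying to put the tower base inside $B_k$. It applies Proposition \ref{p.GWtower} in the full space $X$ to obtain a base $B$, and then \emph{explicitly} defines $B_{k+1}:=B_k\cap G_{k+1}(N_{k+1})\cap S^q(B)$, so nesting is immediate. To make the measure estimate work after this intersection with $B_k$, the tower error is taken measure-dependent, $\delta=M_x(B_k)/10$ rather than a fixed $1/10$; with your fixed $\delta=1/10$ the quantity $M_x(B_{k-1}\cap G_k(N_k)\cap G_{k-1}(N_k)\cap\bigcup_q S^qB')$ could be negative when $M_x(B_{k-1})$ is small. The paper also requires $N_{k+1}(x)\ge N_k(x)$, so that $x\in B_{k+1}\subset B_k\subset G_k(N_k)$ combined with $n\ge N_{k+1}\ge N_k$ yields $x\in G_k(n)$; your alternative of putting $G_{k-1}(N_k)$ directly into the intersection accomplishes the same thing and is fine. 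The rest of your argument (monotonicity of $G_P(\eps,\cdot)$, the tower forcing $n\ge N_k$, the splitting into invariant pieces) is correct.
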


\begin{proof}
We proceed by induction assuming the existence of $B_1\supset\dots\supset B_k$ satisfying \eqref{eq.BG} ($B_1$ was built in Lemma \ref{l.firstCuts}).  The construction of $B_{k+1}$ is very similar to the construction of $B_1$:

Let 
$$
 N_{k+1}(x):=\min\{n\geq \max(\bar N_{k+1}(x),N_k(x)): M_x(G_{k+1}(n))>1-\frac{M_x(B_k)}{10}\}. 
 $$
By Theorem \ref{t.SMBborel}, $N_{k+1}$ is finite for almost all $x$. It is invariant, Borel and takes countably many values, hence we can assume it to be constant (similarly as in the proof of Lemma \ref{l.firstCuts}). Proposition \ref{p.GWtower} for $n=N_{k+1}$ gives a tower with basis $B\subset X$ height $N_{k+1}$ and $M_x$-measure at least $1-M_x(B_k)/10$. It follows that there is an integer $0\leq q<N_{k+1}$ (depending only on $M_x$) such that
 $$
     M_x(B_k\cap G_{k+1}(N_{k+1})\cap S^q(B))> \frac{9}{10N_{k+1}} M_x(B_k) >0. 
 $$
As before we can assume $q$ to be constant and set:
 $
   B_{k+1}:=B_k\cap G_{k+1}(N_{k+1})\cap S^q(B).
 $
To conclude, observe that $x,S^x\in B_{k+1}$ implies that $n\geq N_{k+1}\geq N_k$ so $x\in G_{k+1}(N_{k+1})\cap G_k(N_k)\subset G_{k+1}(n)\cap G_k(n)$, proving eq.\ \eqref{eq.BG}.
\end{proof}

\begin{proof}[Proof of Proposition \ref{p.kCoding}]
We will need the following encodings generalizing $c_{1,\ell}$ from eq.\ \eqref{e.c1n}. For $k,\ell\geq1$, we define $c_{k,\ell}:X\to\mathcal W_{\ell}(s,T)$  by:
 \begin{equation}\label{e.ckn}
    c_{k,\ell}(x)=c_{[g_{k}(x)\ell/T]-1}\circ i_{P_k,P_{k-1},\ell}(x) \quad (P_0:=\{X\}).
 \end{equation}
Here $c_p$, $p\in\NN$ is the same as in \eqref{e.c1n}. In particular $c_{k,\ell}(x)$ characterizes $(P_{k})^\ell(x)$ given $(P_{k-1})^\ell(x)$ if $i_{P_k,P_{k-1},\ell}(x)<s^{[g_{k}(x)\ell/T]-1}$.

We will also need the sets $B_1\supset B_2\supset\dots$ from Lemma \eqref{l.secondCuts} with parameters:
 \begin{equation}\label{e.longBj}
   \bar N_k(x)=3T/\eps_{k}+1.
 \end{equation}

We assume by induction that, for all $1\leq j\leq k$, an almost Borel map $\psi_j:X\to\Sigma_*$ has been defined satisfying all the claims \eqref{i.inductive}-\eqref{i.kChange}  (note that these hold for $k=1$ by Proposition \ref{p.FirstCoding}). 

We build $\alpha^{k+1}$ as a $k$-modification of $\alpha^k$. We call (provisionally) *-intervals the integer intervals $[n,m[$ such that $n<m$ are consecutive times of visit of $x$ to $B_{k+1}$. As $B_j\subset B_{k+1}$, any *-interval is a union of $j$-subintervals for $1\leq j\leq k$ (see Claim \ref{i.BjRec}). Inside each *-interval $[n,m[$:
\begin{enumerate}
 \item[(M1)] we write the word $\spa^{T-1}2$ into the $k$-special positions in the first $k$-sub\-in\-ter\-val of the *-interval (and keep $\spa^{T-1}1$ in  the $k$-special positions in the other $k$-subintervals).
 \item[(M2)] we write the word $c_{k+1,m-n}(S^nx)$ of length $E_T(g_{k+1}(x)(m-n))-T$ into the first $k$-free positions in $[n,m[$.
\end{enumerate}
This is possible: for (M1), this is clear; for (M2), we use Claim \eqref{i.kFree} for level $k$: each $k$-subinterval $I_i$ contains more than $|I_i|(1-g_1(x)-\dots-g_k(x))$ $k$-free positions. Summing over those subintervals, we see that $[n,m[$ contains at least $(m-n)(1-g_1(x)-\dots-g_k(x))$ level $k$-free positions. But (M2) requires only (less than) $(m-n)g_{k+1}(x)$, which is less since $\sum_k g_k(x)<1$. We now check that this $\alpha^{k+1}$ satisfies the claims for $k+1$.

Claim  \eqref{i.inductive} obviously holds from the construction and the fact that a $k$-modifica\-tion of a $j$-modification is still a $j$-modification.

Claim \eqref{i.BjRec} holds for $j\leq k$ because of the same Claim for $k$. For $j=k+1$, let $S^nx\in B_{k+1}\subset B_k$. By the induction hypothesis, $n\in S_k(\alpha^k)$, but this is the same as $S_k(\alpha^{k+1})$ by Lemma \ref{l.modifS}. Since $B_{k+1}\subset B_k$, the $k$-interval $I$ starting at $n$ is a subinterval in a unique *-interval. By the point (M1), $\spa^{T-1}2$ appears at the $k$-special positions in $I$: $n\in S_{k+1}(\alpha^{k+1})$. The converse follows from the same point (M1). Claim \eqref{i.BjRec} holds. In particular, *-intervals coincide with $(k+1)$-intervals, see Def. \ref{def.codseq}. 

Now,  Claims \eqref{i.kSpecial} and the first half of \eqref{i.PkName} are obvious consequences of the modifications (M1)-(M2).
Set $x':=S^nx$ and $\ell:=m-n$. For the second half of \eqref{i.PkName} it suffices to show:
 \begin{equation}\label{e.wts1}
 i_{P_{k+1},P_k,\ell}(x') < s^{\ell g_{k+1}(x)/T-2} < s^{[\ell g_{k+1}(x)/T]-1}
 \end{equation}
   
By construction, $x',S^\ell x'\in B_{k+1}$ implies $x'\in G_k(\ell)\cap G_{k+1}(\ell)$. Therefore,
 $$
    \frac{M_x(P_{k+1}^\ell(x'))}{M_x(P_{k}^\ell(x'))} \geq \frac{\exp \left(-h_{k+1}(x)-\eps_{k+1}/3\right)\ell}{\exp \left(-h_{k}(x)+\eps_{k}/3)\right)\ell}
 $$
Hence, using Corollary {c.SMBcoding}:
 $$\begin{aligned}
   i_{P_{k+1},P_k,\ell}(x') &\leq \exp\left(\ell\frac{\log s}T\frac{h_{P_{k+1}|P_k}(x)+\eps_{k+1}/3+\eps_k/3}{\log s/T}\right)\\
     & \leq s^{(\ell/T) (g_{k+1}(x)-(2/3)(\eps_{k+1}+\eps_k))}\\
     & 
        < s^{\ell g_{k+1}(x)/T-2},
 \end{aligned}$$
since $\ell\geq N_{k+1}(x)>3T/\eps_{k+1}$, concluding the proof of \eqref{e.wts1} and thus of Claim \eqref{i.PkName}.

To prove \eqref{i.kFree}, observe that the number of $k+1$-free positions in $[m,n[$ (obtained by substracting the $k+1$-special and $k+1$-filling positions) is larger than:
 $$\begin{aligned}
    (m-n)(1-g_1(x)-\cdots&-g_k(x)) -  (T + E_T(g_{k+1}(x)(m-n))-T)\\
       &> (m-n)(1-g_1(x)-\dots-g_k(x) - g_{k+1}(x)).
 \end{aligned}$$
Furthermore, as we wrote words from $\mathcal S(s,T)$ into the synchronized $T$-blocks constituting the $k$-free positions, the $(k+1)$-free positions are still a union of $T$-blocks carrying the word $\spa^{T-1}1$. Thus \eqref{i.kFree} holds for $k+1$.

For Claim \eqref{i.kChange}, we observe that $\alpha^k\in\Sigma_*$ and that the successive changes to produce $\alpha^{k+1}$ only involves replacing  synchronized $T$-blocs by elements of $\mathcal S(s,T)$. But this operation leaves $\Sigma_*$ unchanged. This also applies to any $k+1$-modification. Claim \ref{i.kChange} holds for $k+1$. The induction is complete.
\end{proof}

\subsection{Proof of Theorem \ref{t.BorelKrieger}}
We collect out results to prove the Borel version of Krieger's embedding theorem.

Let $\phi:X\to\{0,1\}^\NN$ be the invariant, almost Borel map from Proposition \ref{p.MeasureCoding}. As $\log s/T>h(S)$ we can apply Proposition \ref{c.Eembedding} and get an equivariant, almost Borel map $\psi:X\to \Sigma_*$. To see that these maps satisfy the claims of Theorem \ref{t.BorelKrieger}, it is enough to remark that, for all $x,y$ outside of an almost null set, $\phi(x)=\phi(y)$ implies that $M(x)$ and $M(y)$ are well-defined and equal. Therefore, $(\phi\times\psi)(x)=(\phi\times\psi)(y)$ implies that $M(x)=M(y)$ and $\psi(x)=\psi(y)$ and thus $x=y$.

\section{Hochman's Embedding Theorem}

We deduce Hochman's Theorem \ref{thm.Hochman1} from the  Borel variant of Krieger's embedding theorem. 

\newcommand\tSs{\Sigma'_*}

\subsection{Embedding into $\tSs(s,T,N)$ with $\tfrac1T\log s>h(X)$}

We first prove the Embedding Theorem assuming a uniform entropy gap and embedding into a specific class of SFTs.

\begin{definition}
Let $\Sigma'(s,T,N)$, $\Sigma_*'(s,T,N)$,  $\mathcal W'(s,T,N)$, and $\widehat{\mathcal W}'(s,T,N)$ be defined as $\Sigma(s,T,N)$, $\Sigma_*(s,T,N)$, etc.\ in Def. \ref{def.goodSFT}, but replacing $\mathcal T(s,T)$ by
 $$
   \mathcal T'(s,T):=\{\spa^{T-2}\sigma|\spa^r:0\leq r<T\text{ and }\sigma=\spa \text{  or } 1\}.
 $$
We often write $\tSs$ instead of $\tSs(s,T,N)$. 
\end{definition}

Note that $\tSs$ is invariant but not closed. It is included in the mixing SFT $\Sigma'(s,T,N)$ with $h_\topo(\Sigma'(s,T,N))\leq h_\topo(\Sigma(s,T,N)) +\tfrac1N\log 2$.

\begin{theorem}\label{t.restricted}
Given a Borel system $(X,S)$, there exists an almost Borel embedding $\Psi:(X,S)\to\tSs(s,T,N)$ for all $(s,T,N)$ such that: $\tfrac1T\log s>h(S)$ and $N$ is large enough. 
\end{theorem}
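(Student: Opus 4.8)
\emph{Plan.} Fix $s,T$ with $\tfrac1T\log s>h(S)$ and take $N$ large (this is where the largeness in the statement enters); then invoke Theorem~\ref{t.BorelKrieger} with these $s,T$ and with its parameter $N_*:=N$. This produces an $S$-invariant Borel map $\phi:X\to\{0,1\}^\NN$ and an equivariant almost Borel map $\psi:X\to\Sigma_*(s,T,N)$ for which $\phi\times\psi$ is injective off a null set, together with the factorization $\phi=\tilde\phi\circ M$ of Proposition~\ref{p.MeasureCoding} in which $\tilde\phi$ is injective, so that $\phi(x)$ and the empirical measure $M(x)$ carry the same information. Moreover $B:=\{x:\psi(x)_{T-1}=\sepa\}$ (a translate of $\psi^{-1}([\sepa])$) is completely positive, and for almost every $x$ the visit times of $x$ to $B$ are exactly the left endpoints $S_1(\psi(x))=\{n_i(x):i\in\ZZ\}$ of the level-$1$ intervals of $\psi(x)$, a bi-infinite set.

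The point of $\tSs(s,T,N)$ is that it is obtained from $\Sigma_*(s,T,N)$ by giving each marker block \emph{one extra free symbol}: position $T-2$ of a marker may now be $\spa$ or $1$ (indeed $\mathcal T(s,T)\subset\mathcal T'(s,T)$, the inclusion being the $\sigma=\spa$ case). Thus a point of $\tSs$ canonically determines its underlying point of $\Sigma_*$ (reset every position-$(T-2)$ marker symbol to $\spa$) and the bi-infinite string of these extra bits, and conversely. I will define $\Psi$ by overwriting, at each marker $n_i(x)$ of $\psi(x)$, the symbol at position $n_i(x)+T-2$ by $\beta(S^{n_i(x)}x)$, where $\beta:B\to\{\spa,1\}$ is a Borel map fixed once and for all. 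Since each spliced symbol depends only on the orbit of $x$ \emph{at that marker}, equivariance $\Psi\circ S=\sigma\circ\Psi$ is automatic; and since splicing neither creates nor destroys $\sepa$-symbols nor changes interval lengths, $\Psi(x)$ remains a concatenation of words from $\widehat{\mathcal W}'(s,T,N)$, i.e.\ lies in $\tSs(s,T,N)$. Erasing the spliced bits recovers $\psi(x)$, so by Corollary~\ref{c.Eembedding} (and since $\phi(x)=\phi(y)$ already forces $M(x)=M(y)$ through the injectivity of $\tilde\phi$), to conclude that $\Psi$ is an almost Borel embedding it suffices to arrange that the spliced bit-string, together with $\psi(x)$, determines $\phi(x)$.

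The heart of the matter, and essentially the only ingredient beyond Theorem~\ref{t.BorelKrieger}, is the construction of $\beta$: one must turn the \emph{$S$-invariant} datum $\phi$ — constant along each orbit — into an equivariant pattern legible from any single orbit. The plan is to fix a Borel labelling $\lambda:B\to\NN$ of the markers with the two properties: (a) $\lambda$ is a function of $\psi$, i.e.\ $\lambda=\bar\lambda\circ\psi$, so that $\Psi(x)=\Psi(y)$ forces $\lambda(S^nx)=\lambda(S^ny)$ at each common marker $n$; and (b) for every $\mu\in\pen(S)$ and almost every $x$, each value $m\in\NN$ is taken by $\lambda$ at infinitely many markers $S^{n_i(x)}x$. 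One then sets $\beta(z):=1$ when the $\lambda(z)$-th coordinate of $\phi(z)$ equals $1$, and $\beta(z):=\spa$ otherwise. Such a labelling is produced by cutting $B$ into a Borel partition $B=\bigsqcup_{m\ge1}B_m$ into \emph{completely positive} pieces (so almost every orbit meets each piece infinitely often), obtained by iterating the uniform Rokhlin lemma of Glasner--Weiss (Proposition~\ref{p.GWtower}) inside the first-return system on $B$, the partition being then coupled with the coding so that membership of a marker in a given $B_m$ is readable off $\psi$. I expect reconciling these two requirements — richness of $\lambda$ in the sense of (b) \emph{and} measurability of $\lambda$ with respect to $\psi$, uniformly over all invariant measures, including degenerate ones for which the return time to $B$ is (essentially) constant and the coding words nearly deterministic — to be the main obstacle; it is precisely here that one exploits having built the coding $\psi$ oneself.

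With $\beta$ in hand, set $\Psi$ equal to $\psi$ except for the spliced marker bits, on the almost-all set where $\psi$, the marker structure and $\lambda$ are defined. Then $\Psi$ is Borel, equivariant, and injective off a null set: if $\Psi(x)=\Psi(y)$, erasing the spliced bits gives $\psi(x)=\psi(y)$; by (a) the marker labels of $x$ and $y$ coincide; reading the spliced bits and using (b) yields $\phi(x)=\phi(y)$, hence $M(x)=M(y)$, hence $x=y$ by Corollary~\ref{c.Eembedding}. This is the desired almost Borel embedding $\Psi:(X,S)\to\tSs(s,T,N)$.
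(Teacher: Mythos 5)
Your high-level scheme --- splice one extra $\{\spa,1\}$ bit into each marker of $\psi(x)$, so that $\Psi(x)$ determines both $\psi(x)$ and the invariant datum $\phi(x)$, then invoke the injectivity of $\phi\times\psi$ from Theorem~\ref{t.BorelKrieger} --- is exactly the paper's. Where you diverge, and where the gap lies, is in \emph{what} the spliced bit-string encodes and \emph{how} it is read back.

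You propose a \emph{positional} encoding: assign to each marker $z\in B$ a label $\lambda(z)\in\NN$ and splice the $\lambda(z)$-th bit of $\phi(z)$ at that marker, so that reading $\phi$ back requires knowing which bit sits where. This forces your constraint (a), that $\lambda$ must factor through $\psi$ --- otherwise erasing the spliced bits from $\Psi(x)=\Psi(y)$ leaves you with $\psi(x)=\psi(y)$ but no way to match up labels. But (a) and (b) are genuinely incompatible in general: $\psi$ is handed to you as a black box by Theorem~\ref{t.BorelKrieger}, and for a degenerate ergodic measure (a rotation, say, or more generally any zero-entropy totally ergodic system) the restriction of $\psi$ to a typical orbit is essentially a fixed marker pattern padded by $\spa^{T-1}1$'s --- all filling words are nearly the same, all return times to $B_1$ essentially constant. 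Any Borel $\lambda=\bar\lambda\circ\psi$ then takes at most finitely many values $\mu$-a.e.\ along such an orbit, so (b) fails and all but finitely many bits of $\phi(x)$ are never spliced. You flag this as "the main obstacle" and express a hope that "one exploits having built $\psi$ oneself," but Proposition~\ref{p.GWtower} produces abstract Borel sets that have no reason to be $\psi$-measurable, and the proposal gives no construction that reconciles the two requirements. This is not a technicality to be patched; it is where the idea breaks.

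The paper sidesteps the labelling problem entirely by encoding \emph{statistically} rather than positionally. It packs $\phi(x)$ into a single real $t(x)=\sum_{n\ge0}2\cdot3^{-n-1}\phi(x)_n\in[0,1]$ (an injective invariant map), then uses Lemma~\ref{l.BorelBx} to manufacture a Borel $B_*\subset B_1$ with $M_x(B_*)=t(x)\,M_x(B_1)$ for a.e.\ $x$, and splices a $1$ precisely at visits to $B_*$. No label is needed to read this back: by Birkhoff, the density of spliced $1$'s divided by the density of markers in $\psi'(x)$ converges to $t(x)$ along a single orbit, so $\psi'(x)$ alone determines $t(x)$, hence $\phi(x)$, hence $M_x$, hence $x$. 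The frequency trick is exactly what makes the recovery uniform over \emph{all} aperiodic ergodic measures, including the degenerate ones on which your labelling would collapse. If you want to salvage your approach, this substitution --- replace "which bit is where" by "what fraction of markers carry a $1$" --- is the missing idea, and it requires Lemma~\ref{l.BorelBx} (or Hochman's variant of it) as an additional ingredient you did not supply.
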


To deduce this from Theorem \ref{t.BorelKrieger}, we will replace the invariant map $\phi:X\to\{0,1\}^\NN$ by an equivariant modification of $\psi:X\to\Sigma_*$ obtained by putting symbols $1$ just before the symbols $\sepa$ at times of visit to a carefully built Borel subset $B_*$ of the Borel set $\psi^{-1}([\emptyset\sepa])$.

\subsection{Borel construction of a set with given measure}
The above subset $B_*$ will be defined using the following:

\begin{lemma}\label{l.BorelBx}
Let $Y$ be a standard Borel space.
There exists a Borel function $F:[0,1]\times\Prob(Y)\times Y\to\{0,1\}$ such that, for all $t\in[0,1]$, $\mu\in\Prob(Y)$,
 $$
     \mu(\{y\in X:F(t,\mu,y)=1\})  = t.
 $$
\end{lemma}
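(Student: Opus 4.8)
The plan is to reduce the statement to the case $Y=[0,1]$ with $\mu$ Lebesgue measure via a measurable selection of a ``quantile'' map, and then read off the Borel set as a sublevel set of the cumulative distribution function. First I would invoke the Kuratowski isomorphism theorem: if $Y$ is countable the claim is elementary (order $Y$, take greedily as many atoms as fit, and split one atom using an auxiliary $[0,1]$-coordinate — but since $Y$ need not carry such a coordinate, it is cleaner to handle the general case uniformly), so assume $Y$ is uncountable and fix a Borel isomorphism $j:Y\to[0,1]$. This transports any $\mu\in\Prob(Y)$ to $\nu:=j_*\mu\in\Prob([0,1])$, and it suffices to construct a Borel $G:[0,1]\times\Prob([0,1])\times[0,1]\to\{0,1\}$ with $\nu(\{y:G(t,\nu,y)=1\})=t$; then set $F(t,\mu,y):=G(t,j_*\mu,j(y))$, which is Borel because $\mu\mapsto j_*\mu$ is Borel (each $\nu\mapsto\nu(B)$ pulls back to $\mu\mapsto\mu(j^{-1}B)$, which is Borel by Proposition \ref{p.stdBorelCons}).

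Second, for $\nu\in\Prob([0,1])$ define the cumulative distribution function $D_\nu(y):=\nu([0,y])$ and its left-continuous generalized inverse $q_\nu(t):=\inf\{y\in[0,1]:D_\nu(y)\geq t\}$ (with $q_\nu(0):=0$). The natural candidate is
 $$
   G(t,\nu,y):=\alt{1 & \text{if } y< q_\nu(t),\\ 1 & \text{if } y=q_\nu(t) \text{ and } \nu([0,q_\nu(t)))<t,\\ 0 & \text{otherwise.}}
 $$
One checks that $\{y:G(t,\nu,y)=1\}$ is either $[0,q_\nu(t))$ or $[0,q_\nu(t)]$ according to whether the atom at $q_\nu(t)$ must be included, and that its $\nu$-measure is exactly $t$: indeed $\nu([0,q_\nu(t)))\le t\le \nu([0,q_\nu(t)])=D_\nu(q_\nu(t))$ by definition of the infimum and right-continuity of $D_\nu$, so adding (resp. not adding) the atom lands precisely on $t$. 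The only subtle point is when $t$ equals $\nu$ of a point of continuity increase, but then $q_\nu(t)$ may be an atom-free point and the first branch already gives measure $t$; the case distinction above is written so as to cover all of these.

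The main obstacle — and the only real content — is \textbf{Borel measurability of $(t,\nu,y)\mapsto G(t,\nu,y)$ jointly}, which amounts to Borel measurability of $(t,\nu)\mapsto q_\nu(t)$ and of $(t,\nu)\mapsto \nu([0,q_\nu(t)))$. For the first, note $\{(t,\nu,y):q_\nu(t)\le y\}=\{(t,\nu,y):D_\nu(y)\ge t\}$, and $(\nu,y)\mapsto D_\nu(y)=\nu([0,y])$ is Borel (for fixed $y$ it is Borel in $\nu$ by definition of the Borel structure on $\Prob$; joint measurability follows since $D_\nu(y)=\sup_{r\in\QQ,r>y}\nu([0,r])$ expresses it as a countable supremum of functions Borel in $\nu$ and monotone, hence Borel, in $y$ — or one simply approximates $D_\nu(y)$ by $\sum$ over a dyadic grid); hence $\{q_\nu(t)\le y\}$ is Borel and so is $q_\nu$. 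For the second quantity, write $\nu([0,q_\nu(t)))=\lim_{m\to\infty}\nu\big([0,q_\nu(t)-\tfrac1m]\big)=\lim_m D_\nu\big(q_\nu(t)-\tfrac1m\big)$, a countable limit of compositions of Borel maps, hence Borel. Putting these together, each of the three branches defining $G$ is a Borel condition, so $G$ is Borel, and therefore so is $F$. (Note the paper does not define $\QQ$ as a macro; I would either add it or replace $\QQ$ by ``rationals'' in prose. Also $\alt$ and the other macros used are defined in the preamble of the excerpt.)
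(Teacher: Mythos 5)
Your proposal takes a genuinely different route from the paper's own proof. The paper constructs the set greedily: it fixes a refining sequence of finite partitions $P_1,P_2,\dots$, picks at stage $n$ the least (in a fixed order) union $A_n^{\mu,t}$ of $P_n$-atoms inside $A_{n-1}^{\mu,t}$ with $\mu$-measure $\geq t$ and minimal, and then shows via a Doob martingale argument that $\bigcap_n A_n^{\mu,t}$ has measure exactly $t$. Your quantile construction is more direct and arguably cleaner; both are entirely reasonable.

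However, there is a genuine gap, and it is precisely at the step you tried to wave through. You claim that, having bracketed $\nu([0,q_\nu(t)))\le t\le\nu([0,q_\nu(t)])$, ``adding (resp.\ not adding) the atom lands precisely on $t$.'' This is false: an atom has a fixed mass, and including it can overshoot. Take $\nu=\delta_{1/2}$ and $t=1/2$; then $q_\nu(t)=1/2$, the branch you fall into includes the atom, and your set $[0,1/2]$ has measure $1$, not $1/2$. No Borel set can have $\nu$-measure $1/2$ here at all, so the statement of the lemma as written (quantifying over all $\mu\in\Prob(Y)$) is literally false. The hypothesis that is missing, in the lemma statement and in your argument, is that $\mu$ be \emph{atomless}. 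For atomless $\nu$ the function $D_\nu$ is continuous, $\nu(\{q_\nu(t)\})=0$, and your two brackets collapse to give exactly $t$ --- that part is fine. For the record, the paper's own proof suffers from the same omission in the statement but does invoke atomlessness explicitly in the argument (``The measure $\mu$ being atomless, $\mu(B_n)<\eps$ \dots''), and the lemma is only ever applied to conditional measures of aperiodic ergodic measures, which are automatically atomless, so the defect is harmless for the paper. You should state and use the atomless hypothesis rather than assert a false equality. Two smaller remarks: your display $D_\nu(y)=\sup_{r>y,\,r\in\mathbb Q}\nu([0,r])$ should be an infimum (right-continuity, not left-continuity); and your parenthetical remark about the countable case is moot --- for countable $Y$ every probability measure is purely atomic and the claim is simply false, so the reduction to $Y=[0,1]$ via Kuratowski is the only case that matters. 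The joint Borel measurability arguments for $D$, $q$, and $(t,\nu)\mapsto\nu([0,q_\nu(t)))$ are essentially correct (modulo clamping $q_\nu(t)-1/m$ at $0$).
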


\begin{remark}
Hochman uses another  idea which avoids such an explicit construction.
\end{remark}

\begin{proof}
Recall the generating sequence of finite Borel partitions $P_1,P_2,\dots$. For each $k\geq1$, let $\hat P_k$ be the set of unions of elements of $P_k$. Endow $\hat P_k$ with some total order.   For each $\mu\in\Prob(Y)$, $t\in[0,1]$, let $A^\mu_n\subset Y$, $n\geq0$, be the sequence of Borel subsets defined inductively as: $A^{\mu,t}_0=Y$ and, for all $n\geq1$, $A^{\mu,t}_n$ is the first element $A$ of $\hat P_n$ (for its chosen ordering) such that
 \begin{enumerate}
 \item $A\subset A^{\mu,t}_{n-1}$;
 \item $\mu(A)\geq t$;
 \item $\mu(A)\leq \mu(B)$ for all $B\in\hat P_n$ satisfying (1) and (2).
 \end{enumerate}
The consideration of $A=A^{\mu,t}_{n-1}$ and the finiteness of $\hat P_n$ shows that $A^{\mu,t}_n$ as above always exists. 

As $\mu\mapsto\mu(A)$ is Borel for each $A\in\hat P_n$, there is a finite Borel partition of $[0,1]\times\Prob(Y)$, on each element $E$ of which, $A^{\mu,t}_n$ is constant. Hence each $F_n:E\times Y\to\{0,1\}$, $(t,\mu,x)\mapsto 1_{A^{\mu,t}_n}(x)$, is Borel. Therefore, $F_n:[0,1]\times\Prob(Y)\times Y\to\{0,1\}$ is Borel.

Define $F:[0,1]\times\Prob(Y)\times Y\to\{0,1\}$ by $F(t,\mu,x):=\inf_{n\geq1} F_n(t,\mu,x)$. $F$ is Borel as a pointwise limit of Borel functions. Let $A^{\mu,t}:=\{x\in X:F(t,\mu,x)=1\}$.

Fix some $(t,\mu)\in[0,1]\times\Prob(Y)$. By dominated convergence, $\mu(F(t,\mu,\cdot))=\lim_{n\to\infty} \mu(F_n(t,\mu,\cdot))\geq t$. Assume by contradiction that $\mu(A^{\mu,t}_n)\geq t+\eps$ for some $\eps>0$ and all $n\geq1$. Doob's martingale convergence theorem implies that, for all large $n$, there is $B_n\in P_n$ with $\mu(B_n\cap A^{\mu,t})>\tfrac12\mu(B_n)>0$. The measure $\mu$ being atomless, $\mu(B_n)<\eps$ for all large $n$. Thus, $A^{\mu,t}_n\setminus B_n\in\hat P_n$ and $t\leq \mu(A^{\mu,t}_n\setminus B_n)\leq \mu(A^{\mu,t}_n) - \mu(B_n)/2$, contradicting the definition of $A^{\mu,t}_n$.
\end{proof}

\subsection{Proof of the restricted theorem}

Let $\psi:X\to\Sigma_*$ and $\phi:X\to\{0,1\}^\NN$ be the two almost Borel maps from Theorem \ref{t.BorelKrieger}. Let $t:\{0,1\}^\NN\to[0,1]$ be the  map defined by:
 $$
    t(x) := \sum_{n\geq0} 2\cdot 3^{-n-1}\phi(x)|_n.
 $$

\begin{xca}
Show that $t:X\to[0,1]$ is almost Borel, injective and invariant.
\end{xca}

Lemma \ref{l.BorelBx} applied to the Borel space $Y=B_1$ gives a Borel function $F_1:[0,1]\times\Prob(Y)\times Y\to\{0,1\}$. We define $F:[0,1]\times\pen(S)\times X\to\{0,1\}$ by 
 $$
   F(t,\mu,x):=\alt{ F_1(t,\mu(\cdot\cap B_1)/\mu(B_1),x) & \text{if }x\in B_1\\
                     0 & \text{otherwise,}}
 $$
so $\mu(F(t,\mu,\cdot))=\mu(B_1)\cdot t$. This is well-defined since $B_1$ is completely positive. Let $B_*:=\{x\in X:F(t(x),M(x),x)=1\}$. Note that $M_x(B_*)=t(x)M_x(B_1)$ for almost all $x\in X$ since, for any $\mu\in\pen(S)$ and $\mu$-a.e. $x$, $M(x)=\mu$ and $t(x)$ is equal to some constant $t_*$, hence $\mu(B_*)=\mu(F(t_*,\mu,\cdot))$.

Let $\phi:X\to\{0,1\}^\NN$ and $\psi:X\to\tSs$ given by Theorem \ref{t.BorelKrieger}.
Let $\psi':X\to\tSs$ be defined by:
 $$
    \psi'(x)|_p = \alt{
       1 & \text{ if } S^{-p+T-2}x\in B_*\\
       \psi(x)|_p&\text{ otherwise.}
       }
 $$
Observe that $\psi'$ is Borel  because each coordinate is. It is equivariant  $\psi$ is. 
Also, $\psi'(x)\in\tSs$ since $\psi(x)\in\Sigma_*$ and the modifications only turns symbol $\spa$ into symbol $1$ just before symbol $\sepa$.

It remains to prove the injectivity of $\psi'$. It suffices to show that $\psi'(x)$ determines $t(x)$, since $\phi\times\psi$ is injective by Theorem \ref{t.BorelKrieger}. By Birkhoff's ergodic theorem, for almost all $x\in X$, the following limits
 $$
   \lim_{n\to\infty} \frac1n\{0\leq k<n: \psi'(x)|_{k+T-2}^{k+T}=1\sepa\} \text{ and }
       \lim_{n\to\infty} \frac1n\{0\leq k<n: \psi'(x)|_{k+T-1}^{k+T}=\sepa\} 
 $$
exist and are equal to $M_x(B_*)=t(x)M_x(B_1)$ and $M_x(B_1)$.  As $M_x(B_1)>0$, this shows that $\psi'(x)$ determines $t(x)$ and hence $M_x$. Theorem \ref{t.restricted} follows.

\subsection{Embedding into a given mixing SFT}
The next step is:

\begin{lemma}\label{l.embedSigma}
Let $\Sigma$ be a mixing SFT. Let $s\geq2,T\geq1$ be integers with $\tfrac1T\log s<h(\Sigma)$. For all large enough $N$,  there is an almost Borel embedding of $\Sigma_*'(s,T,N)$ into $\Sigma$.
\end{lemma}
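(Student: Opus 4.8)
The plan is to build the embedding block by block: each block of $\tSs(s,T,N)$ (the maximal intervals between consecutive occurrences of the symbol $\sepa$) is sent to a $\Sigma$-block of the same length consisting of a fixed marker word $W_*$ of $\Sigma$ followed by a $\Sigma$-word encoding the content of the block. \emph{Set-up.} Since the existence of an almost Borel embedding is preserved by topological conjugacy on the target, I may assume $\Sigma$ is a one-step SFT given by a strongly connected aperiodic graph and fix two of its vertices $\alpha_*,\omega_*$. By the word-counting estimate for irreducible SFTs recalled in Section~\ref{sec.back.shift}, there are $n_0\in\NN$ and $C>1$ such that, for $n\geq n_0$, the number of paths of length $n$ from $\omega_*$ to $\alpha_*$ lies between $C^{-1}e^{nh(\Sigma)}$ and $Ce^{nh(\Sigma)}$.

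\emph{Choice of the marker and of the codes.} Using the marker Lemma~\ref{l.marker}, I would produce a word $W_*$ of length $L$, realized as a path from $\alpha_*$ to $\omega_*$ in $\Sigma$, which has no nontrivial self-overlap (no proper prefix equals a proper suffix) and which is ``rare enough'' that the SFT obtained by forbidding $W_*$ as a factor still has topological entropy $h''>\tfrac1T\log s$; the point is that the entropy drop produced by forbidding a single long bifix-free word can be made arbitrarily small, so $L$ may be fixed once $h''>\tfrac1T\log s$ is achieved. From now on $N$ is taken large enough (depending on $\Sigma,s,T,L,h'',C,n_0$) that $N\geq L+n_0$ and that, for every block length $\ell\geq N$ realized in $\tSs(s,T,N)$, the number of $W_*$-avoiding paths of length $\ell-L$ from $\omega_*$ to $\alpha_*$ exceeds $\#\mathcal T'(s,T)\cdot s^{\lceil\ell/T\rceil}$, which is an upper bound for the number of possible contents of such a block. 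For each such $\ell$ I fix an injection $c_\ell$ from the set of these contents into the set of $W_*$-avoiding paths of length $\ell-L$ from $\omega_*$ to $\alpha_*$.

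\emph{The coding map and its injectivity.} For $\alpha\in\tSs(s,T,N)$, being a bi-infinite concatenation of words of $\widehat{\mathcal W}'(s,T,N)$, the decomposition into blocks $[n_i,n_{i+1})$ is globally well defined and is a Borel function of $\alpha$. Define $\psi(\alpha)$ so that on each block $[n_i,n_{i+1})$ of length $\ell_i$ its first $L$ coordinates carry $W_*$ and the remaining $\ell_i-L$ carry $c_{\ell_i}$ applied to the content of that block. Since $W_*$ goes $\alpha_*\to\omega_*$ and each $c_\ell$-image goes $\omega_*\to\alpha_*$, the resulting bi-infinite sequence is a legal path, so $\psi(\alpha)\in\Sigma$; the identity $\psi\circ\sigma=\sigma\circ\psi$ is immediate, and each coordinate $\psi(\alpha)|_p$ depends on $\alpha$ only through the finitely many Borel data of the block containing $p$, so $\psi$ is Borel. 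For injectivity one checks that $W_*$ occurs in $\psi(\alpha)$ exactly at the boundaries $\{n_i\}$: it cannot occur inside a content segment (those avoid $W_*$), and an occurrence straddling a boundary would force a proper prefix of $W_*$ to equal a proper suffix of $W_*$, which is excluded. Hence from $\psi(\alpha)$ one recovers the boundaries, then each block length $\ell_i$ and, inverting $c_{\ell_i}$, the content of each block, hence $\alpha$. By the Lusin--Souslin Theorem~\ref{thm.LusinSouslin}, $\psi$ is an almost Borel embedding of $\tSs(s,T,N)$ into $\Sigma$.

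\emph{Main obstacle.} The delicate step is the choice of $W_*$: it must be short relative to $N$, bifix-free, and rare enough that forbidding it still leaves entropy above $\tfrac1T\log s$. This is exactly what the marker Lemma~\ref{l.marker}, combined with the SFT word-count estimate, is meant to deliver, and it is what gives meaning to ``$N$ large enough'' (first $L$ and $W_*$ are chosen, then $N$). Everything else — legality of the glued paths, the Borel and equivariant character of $\psi$, and the elementary self-overlap bookkeeping ruling out spurious occurrences of $W_*$ — is routine.
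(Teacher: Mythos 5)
You follow the paper's overall strategy — marker word from Lemma~\ref{l.marker}, word counting in the marker-free SFT, block splicing — and the argument is essentially correct, but you diverge at one point and the divergence is a genuine simplification worth noting. The paper invokes Claim~(2) of Lemma~\ref{l.marker} to supply transition words $s^{\ell,i},t^{\ell,i}$ of fixed length $M$ sandwiched between the marker $w$ and the content words $u^{\ell,i}\in\Sigma_w$; in the paper's construction these transition words do double duty, making the concatenation a legal $\Sigma$-word while simultaneously excluding spurious occurrences of $w$ across the seams. You bypass Claim~(2) entirely: you glue by prescribing the endpoint vertices of the content paths (so the concatenation is automatically a legal path), and you exclude spurious markers from the no-self-overlap property (Claim~(1)) alone. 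The dichotomy in your injectivity argument is complete precisely because the content segments exactly fill the gaps between consecutive placed markers, so any occurrence of $W_*$ either lies inside a content segment (impossible, since those avoid $W_*$), overlaps a placed $W_*$ with a nonzero shift (impossible, unbordered), or would have to straddle a full placed $W_*$ together with something on both sides (impossible, it has the same length). Net gain: the most technical half of Lemma~\ref{l.marker} is not used, and this does streamline the argument.

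Two details need tightening, though neither affects the substance. First, you fix $\alpha_*,\omega_*$ \emph{before} invoking the marker lemma, but the lemma does not let you prescribe the endpoints of the marker; you should instead take $\alpha_*,\omega_*$ to be the first and last symbol of the $W_*$ the lemma produces. Second, the adjacency bookkeeping at the seams is slightly off: if $W_*$ ends at $\omega_*$, the content path $c$ cannot itself \emph{start} at $\omega_*$ (that would require a self-loop at $\omega_*$); you want $c$ to start at some $\omega'$ with an edge $\omega_*\to\omega'$ and to end at some $\alpha'$ with an edge $\alpha'\to\alpha_*$, and both $\omega'$ and $\alpha'$ must actually be vertices appearing in the mixing SFT $\Sigma_{W_*}$. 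Correspondingly, the word-count estimate you should quote is the one for $\Sigma_{W_*}$ (entropy $h''>\tfrac1T\log s$), not for $\Sigma$ as in your set-up paragraph. The existence of such $\omega',\alpha'$ is not automatic and has to be justified — it can for instance be read off the explicit form $w=ba'(ba'a)^m$ used in the proof of Lemma~\ref{l.marker}, since, say, the periodic point $\dots aaa\dots$ lies in $\Sigma_{W_*}$ and provides the required neighbours — but this is exactly the kind of seam issue that the paper's transition-word machinery quietly absorbs, so it should be stated explicitly in the version that dispenses with Claim~(2).
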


 The following is a variant of a standard tool of symbolic dynamics (see \cite{LindMarcus}):

\begin{lemma}\label{l.marker}
Let $X$ be a one-step mixing  SFT with $h_\topo(X)>0$. If $h<h_\topo(X)$, there exist a finite $X$-word $w$ defining $X_w:=\bigcap_{n\in\ZZ} \sigma^{-n}(X\setminus[w]_X)$ and a positive integer $M$ such that:
 \begin{enumerate}
 \item no two occurences of $w$ in $X$ can overlap, i.e., for any $0<k<|w|$,  $[w]_X\cap\sigma^{-k}[w]_X=\emptyset$;
 \item for each pair of words $u,v$ in $X_w$ of lengths $\geq |w|$, there are $M$-words $s,t$ on $X$ such that $uswtv$ is a word on $X$ and, for $0\leq k<|uswtv|$, $[uswtv]_X\cap\sigma^{-k}[w]_X\ne\emptyset \implies k=|us|$.
 \item $X_w$  is a mixing SFT;
 \item $h_\topo(X_w)>h$;
\end{enumerate}
(we call a word satisfying (1) and (2) a \emph{marker}.)
\end{lemma}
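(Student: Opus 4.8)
The plan is to realise the marker as a long, highly redundant word built from the branching structure of $X$, namely $w=\gamma_0^{m}\gamma_1^{m+1}$ for two suitably chosen distinct loops $\gamma_0\neq\gamma_1$ of a common length $\ell$ at a common vertex $v$, with $m$ large.

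First I would extract combinatorial freedom from the hypotheses. Since $X$ is a one-step mixing SFT with $h_\topo(X)>0$, some vertex $v$ of its defining graph carries at least two distinct first-return loops: otherwise every vertex would carry exactly one, the loop generating function there would be $1/(1-z^{d})$, and $h_\topo(X)=0$. Amalgamating copies of these first-return loops (and enlarging $\ell$ if needed) I would produce two distinct loops $\gamma_0\neq\gamma_1$ at $v$ of a common length $\ell$ which are moreover \emph{generic} in the following senses: $\gamma_0$ is primitive, $\gamma_1$ is not a cyclic conjugate of $\gamma_0$, and no nonempty proper prefix of one of $\gamma_0,\gamma_1$ is a suffix of the other. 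I would also fix a mixing constant $M_0$ for $X$: any two symbols are joined by $X$-words of every length $\geq M_0$.

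Set $w:=\gamma_0^{m}\gamma_1^{m+1}$ and $L:=|w|=(2m+1)\ell$. Property (1): an overlap of two occurrences of $w$ in an $X$-point would give $w$ a period $p<L$; a period divisible by $\ell$ forces $\gamma_0=\gamma_1$ through the block pattern of $0^m1^{m+1}$, while a period not divisible by $\ell$ forces, via the Fine--Wilf theorem applied inside $\gamma_0^m$ together with primitivity and non-conjugacy, another contradiction; hence (1) holds. Properties (3) and (4): $X_w$ is the subshift of the SFT $X$ obtained by forbidding the single word $w$, hence itself an SFT, and it contains the periodic point $\ldots\gamma_0\gamma_0\ldots$; its mixing will follow from the gluing construction below; and since $w$ is self-non-overlapping, the standard entropy estimate for deleting such a word from a mixing SFT (the generating function of $w$-avoiding words has trivial correlation polynomial, so its dominant singularity tends to $e^{-h_\topo(X)}$ as $L\to\infty$) gives $h_\topo(X_w)>h$ once $m$ is large.

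The heart of the proof is property (2), and this is the step I expect to be the main obstacle. Given $X_w$-words $u,v$ with $|u|,|v|\geq L$, I would connect them as $u\,s\,w\,t\,v$ with $s:=\kappa_1\gamma_0^{N}\gamma_1^{m}$ and $t:=\gamma_0^{m-1}\gamma_1^{N}\kappa_2$ for a large fixed $N$ and short connectors $\kappa_1,\kappa_2$ (length $\leq M_0$, supplied by mixing), padded so that $|s|=|t|=M$ for a constant $M$ depending only on $X,m,N$ -- in particular uniform in $u,v$. These shapes are chosen so that no new run of $m$ consecutive $\gamma_0$-blocks immediately followed by $m{+}1$ consecutive $\gamma_1$-blocks is created at any junction. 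One then enumerates the possible positions of a spurious full occurrence of $w$ in $u\,s\,w\,t\,v$: a copy inside $u$, $v$, $s$ or $t$ is excluded ($u,v\in X_w$; $s$ and $t$ carry only a long $\gamma_0$-run, a $\gamma_1$-run of length $m$ resp.\ $N$, and short connectors); a copy meeting three or more of the five pieces is excluded since $M,|u|,|v|\geq L$; a copy straddling $s$--$w$ or $w$--$t$ would make $w$ bordered, contradicting (1); a copy shifted within the genuine $w$ is excluded by (1); and a partial copy of $w$ running off the right end of $v$ is harmless (it cannot be completed across the next connector, which begins with $\kappa_1\gamma_0^{N}$). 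The only surviving case is a copy straddling $u$--$s$ (and symmetrically $t$--$v$): such a copy must carry its terminal block $\gamma_1^{m+1}$ either wholly inside $u$ -- forcing $w$ to occur in $u$, impossible -- or overlapping $s$ past its $\gamma_1^{m}$-tail, which is too short, while the remaining unaligned partial overlaps are killed by the "no bifix overlap" clause of Step one. Running the same construction with arbitrary $u,v\in X_w$ also supplies the irreducibility and aperiodicity, hence mixing, of $X_w$ needed above. The delicacy is precisely that nothing may be assumed about $u$ and $v$ beyond membership in $X_w$ and length $\geq L$, so the case analysis must be exhaustive while $M$ remains a fixed constant; this is where the redundancy of $\gamma_0^{m}\gamma_1^{m+1}$ and the genericity of $\gamma_0,\gamma_1$ are essential.
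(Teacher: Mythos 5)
Your route is genuinely different from the paper's: you build the marker from two loops $\gamma_0,\gamma_1$ at a common vertex, whereas the paper extracts a word $aaba'$ with pairwise distinct symbols and takes $w:=ba'(ba'a)^m$. But there is a structural gap that breaks claims (3) and (4), not merely their exposition.

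Your gluing in (2) produces an $X$-word $u\,s\,w\,t\,v$ that \emph{contains} $w$ by design. Such a word is not an $X_w$-word, so running this construction "with arbitrary $u,v\in X_w$" gives you paths in $X$ passing through the forbidden word, not paths inside $X_w$; it cannot establish irreducibility, let alone mixing, of $X_w$. Likewise the entropy bound (4) cannot be obtained by this argument alone. The key device you are missing is an auxiliary word $\tilde w$ of the \emph{same length} as $w$, which (a) never overlaps itself, (b) never overlaps $w$, and (c) can replace $w$ in any connector. The paper's choice is $w:=ba'(ba'a)^m$ and $\tilde w:=ba'(ba'a^2)^{m|ba'a|/|ba'a^2|}$: one proves property (2) for $w$, then replaces $w$ by $\tilde w$ in the connector $s w t$ to get a $w$-avoiding bridge, giving mixing of $X_w$; and the same substitution is what makes the $\leq \#\mathcal A^{2|w|}\cdot 2^{n/|w|}$-to-one surjection onto $X_w$-words, hence the entropy estimate, work. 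In your framework you would need a companion $\tilde w$ for $\gamma_0^m\gamma_1^{m+1}$ with the analogous non-overlap properties, and you have not supplied one.

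A secondary but real concern: you assert, without construction, that one can choose $\gamma_0,\gamma_1$ of equal length so that no nonempty proper prefix of one is a suffix of the other. This is a strong string-combinatorial condition, and "amalgamating first-return loops" does not obviously produce it; even on the full $2$-shift, pairs like $(00,01)$ or $(011,100)$ violate it. Relatedly, your Fine--Wilf step in (1) only rules out periods $p<(m-1)\ell$ that are not multiples of $\ell$; the near-length periods $(m-1)\ell\leq p<L$ with $\ell\nmid p$ are exactly where this unverified prefix/suffix condition would be doing the work, and that case is not addressed. The paper sidesteps all of this by building $w$ from a word $aaba'$ whose symbols $a_0,\dots,a_{p-1},b_0,\dots,b_{q-1}$ are \emph{pairwise distinct}, which makes the self-overlap and cross-overlap verifications essentially one-line.
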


\begin{proof}
As $h_\topo(X)>0$, $X$ is not reduced to a single periodic orbit and there must exist $a=a_0\dots a_{p-1}$, $a':=a_{p'}\dots a_{p-1}$ for  $1\leq p'\leq p$, $b=b_0\dots b_{q -1}$ such that, setting , $aaba'$ is an $X$-word and $a_0,\dots,a_{p-1},b_0,\dots,b_{q-1}$ are pairwise distinct symbols. For a positive multiple $m$ of $|ba'a^2|$, let:
 $$
    w:=ba'(ba'a)^m \text{ and } \tilde w:=ba'(ba'a^2)^{m|ba'a|/|ba'a^2|}.
 $$
Note, $|w|=|\tilde w|$. We claim that for $m$ large enough, $w$ has the required properties.

We prove (1) by contradiction. We must have: $w_k=w_0=b_0$, so: $0<k\leq |w|-|ba'a|$ so $ba'a_0=ba'b_0$. As $a_0\ne b_0$, this is a contradiction. Similarly,
 $$
(*)\qquad\qquad [w]\cap\sigma^{-k}[\tilde w]=[\tilde w]\cap\sigma^{-k}[w]=[\tilde w]\cap\sigma^{-k}[\tilde w]=\emptyset \text{ for all }0<k<|w|.\qquad
 $$

\medbreak

The mixing of $X$ gives an integer $M_1$ such that for any two symbols $\alpha,\beta$ of $X$ and any $n\geq M_1$ there is an $X$-word $s$ of length $n$ such that $\alpha s\beta$ is an $X$-word. We assume that $m$ is large enough so that $|w|>M:=M_1+2|a|+|b|$.

To prove Claim (2) let $u,v$ be $X_w$-words of lengths $\geq|w|$. We consider overlaps involving $v$. Note $v\in X_w$ so $k<|uswt|$. Let $j\geq0$ be the largest integer such that
 $$
     v_0\dots v_{j-1} = w_{\ell-j}\dots w_{\ell-1}
 $$
\noindent{\sl First case:} if $j=0$, then no overlap is possible so any $v$ as above fulfills the Claim.

\noindent{\sl Second case:} if $j<|a'(ba'a)^m|$ and $v_0=b_i$, we set $c=a^2b_0\dots b_{i-1}$ ($c:=a^2$ if $i=0$) so $cv$ is an $X$-word. Pick a word $\tilde t$ such that $w\tilde tc$ is a $X$-word. The same argument as in (1) shows that $w$ cannot overlap with $tv$ for $t:=\tilde t c$.

\noindent{\sl Third case:} if $j<|a'(ba'a)^m|$ and $v_0=a_i$, this is entirely similar as above using $c=aa_0\dots a_{i-1}$.

\noindent{\sl Fourth case:} if $j\geq |a'(ba'a)^m|$, remark that $v\in X_w$, gives a word $c$ of length $|w|-j\leq |b|$ such that $cv_0\dots v_{j-1}\ne w$. One concludes as before. 

Thus, for all $v$, one can find $t$ excluding any overlap with $k>|usw|$. $|us|\leq k <[usw|$ is forbidden by Claim 1. Similarly, one can find $s$ depending on $u$ excluding any overlap with $k<|us|$. Claim (2) is proved.

\medbreak

We turn to Claim (3). $X_w$ is defined from $X$ by forbidding a single word, hence it is a (possibly multi-step) SFT. The mixing will follow if, for any two $X_w$-words $u,v$  and integer $n\geq 2M+|w|$, there is a word $t$ of length $n$ such that $utv$ is an $X_w$-word. Claim (2) gives a word $swt$ with length $n$ such that $u(swt)v$ is a $X$-word where $w$ only appear in the obvious place. Now $u(s\tilde wt)v$ is a $X$-word in which $w$ cannot appear: otherwise, by Claim (2) it would overlap $\tilde w$, but this is not possible by the property (*) above.

\medbreak

To prove Claim (4), associate to each $X$-word $u$ of length $n$ a $X_w$-word by replacing each occurence of $w$ in $u$ with a copy of $\tilde w$ and trimming the first and last $|w|$ symbols. The non-overlapping properties of $w$ and $\tilde w$ with respect to themselves and with respect to each other show that this map is at most $\#\mathcal A^{2|w|}\cdot 2^{n/|w|}$-to-$1$. It follows that, for $|w|$ large enough,
 $$
    h_\topo(X)\leq h_\topo(X_w) + \frac{\log 2}{|w|} < h_\topo(X_w) + h.
 $$
\end{proof}

\begin{proof}[Proof of Lemma \ref{l.embedSigma}]
Lemma \ref{l.marker} applied to $\Sigma$ and $h_2$ gives a marker $w$ and an integer $M$. Since $h_\topo(\Sigma_w)>h_2$, there exist numbers $C_1,\eps>0$ and $L_1<\infty$ such that, for any $\ell\geq L_1$, there exist $N(\ell)\geq C_1e^{\ell(h_2+\eps)}$ distinct $\Sigma_w$-words of length $\ell$:
 $
   u^{\ell,1},\dots,u^{\ell,N(\ell)}.
 $
By Claim (2) of Lemma \ref{l.marker}, there are pairs $t^{\ell,i}$, $s^{\ell,i}$ of $\Sigma$-words of length $M$  such that:
 $
   ws^{\ell,i}u^{\ell,i}t^{\ell,i}w$ is a $\Sigma$-word for $i=1,\dots,N(\ell).
 $ 
 There is $N_*<\infty$ such that for all $\ell\geq N_*$, the following words of length $\ell$ are pairwise distinct:
 $$
 v^{\ell,i}:=ws^{\ell-M,i}u^{\ell-M,i}t^{\ell-M,i}, \quad i=1,\dots, [e^{h_2\ell}],
 $$
as $N(\ell-2M-|w|)\geq C_1 e^{-h_2 (|w|+2M)}e^{\eps \ell}\cdot e^{h_2 \ell}$.
For $N\geq N_*$, we define the embedding $i:\tSs(s,T,N)\to\Sigma$ as follows. For  $\alpha\in\tSs(s,T,N_*)$, we define:
 $$
   \{\dots < p_{-1} < 0 \leq p_0 < p_1 <\dots\} := \{p\in\ZZ: \alpha|_{p}^{p+T} = \spa^{T-2}\sigma\sepa,\; \sigma=\spa,1\}.
  $$
For each $n\in\ZZ$, the word $\alpha|_{p_n}^{p_{n+1}}$ belongs to $\widehat{\mathcal W}'(s,T)$. The number of words of length $\ell$ in this latter set is at most: $2s^{[\ell/T]-1}\leq s^{\ell/T}$. Indeed, if $\ell=qT+r$ with $0\leq r<T$, $q\in\NN$, then the first $T+r$ symbols are $\spa^{T-2}\sigma\sepa\spa^r$ with $\sigma=\spa$ or $1$ and the remainder is a concatenation of words from $\mathcal S(s,T)$. 

For each $\ell\geq N$, we fix an enumeration 
 $$
   \{w^1,\dots,w^{n(\ell)}\} = \widehat{\mathcal W}'(s,T)\cap\mathcal A^\ell
 $$
and we define $i(\alpha):=\beta$ where $\beta|_{p_n}^{p_{n+1}}:=v^{p_{n+1}-p_n,j}$  if $\alpha|_{p_n}^{p_{n+1}}=w^j$. Obviously the map $i$ is Borel and equivariant.
To see that $i:\tSs(s,T,N)\to\Sigma$ is injective, it suffices to see that the marker $w$ occurs in $i(\alpha)$ exactly at positions $p_n$, $n\in\ZZ$. This follows from Claim (2) from Lemma \ref{l.marker}. 
\end{proof}

\subsection{General case}
We now prove Hochman's embedding Theorem \ref{thm.Hochman1}.
Recall the class $\mathcal B(h)$ of Borel systems from Sec.\ \ref{sec.aB.Univ}.

\begin{proof}[Proof of Theorem \ref{thm.Hochman1}]
By Lemma \ref{l.SeqUnivSub}, it suffices to show that, for any $h<h(\Sigma)$, any Borel system $(X,S)$ from $\mathcal B(h)$ has an almost Borel embedding into $\Sigma$.

By eq. \eqref{e.MSentropy}, there is a mixing SFT $\bar\Sigma\subset\Sigma$ with $h_\topo(\bar\Sigma)>h$. Fix integers $s\geq2$, $T\geq 1$ such that $h<\tfrac1T\log s<h_\topo(\bar\Sigma)$.  Theorem \ref{t.restricted} gives an almost Borel embedding $\psi:X\to\tSs(s,T,N)$ for all large $N$. Proposition \ref{l.embedSigma} gives an almost Borel embedding $i:\tSs(s,T,N)\to \bar\Sigma$, for all large $N$. Thus, picking one such large integer $N$, $\Psi:=i\circ\psi:X\to\Sigma$ is an almost Borel embedding.
\end{proof}

\section{Katok's theorem with periods}\label{sec.Katok}

In this section, we recall the classical theorem by Katok approximating non-trivial hyperbolic measures by horseshoes and supplement it by relating the periods of the measures and that of the horseshoes. In particular a mixing hyperbolic measure is approximated by mixing horseshoes. Combining this with Hochman's Theorem \ref{thm.Hochman1}, we will obtain Theorem \ref{t.mixdiff} and then Corollaries \ref{c.sarig}, \ref{c.berger}, and \ref{c.BF}.

\subsection{Periods of an ergodic system}\label{sec.periods}
If $\mu\in\Proberg(T)$, 
 $$
   \per(T,\mu):=\{p\geq1:\exists f\in L^2(\mu)\; f\circ T = e^{2i\pi/p}f \text{ and }|f| \equiv 1\}.
 $$
The integers $p\in\per(T,\mu)$ are called the \emph{periods} of $(T,\mu)$. Some measures have a \emph{maximum period} $p$, i.e., $p$ is a period and no larger integer is a period.

\begin{xca}
An integer $p\geq1$ is a period of $(T,\mu)$ if and only if there exists Borel subsets $X_0,\dots,X_{p-1}$ such that $TX_i=X_{i+1}$  (here $X_p:=X_0$) and $\{X_0,\dots,X_{p-1}\}$ is a partition of $X$ (both assertions modulo $\mu$). Check that $(T,\mu)$ totally ergodic implies that only $1$ is a period. Prove the converse.
\end{xca}

\begin{xca}
Check that any positive divisor of a period of $(T,\mu)$ is again a period of $(T,\mu)$.
Give examples of ergodic measure-preserving systems $(T,\mu)$, the set of periods of which coincides with (1) $\{2^n:n\in\NN\}$; (2) $\{2^p3^q:p,q\in\NN,\; q\leq 10\}$. Characterize the sets of periods of ergodic systems among the subsets of $\NN^*$.
\end{xca}

 
\subsection{Katok's theorem}

\begin{theorem}\label{t.KatokPeriod}
Let $f$ be a $C^{1+}$-diffeomorphism of a compact manifold. Let $\mu$ be an ergodic, aperiodic, hyperbolic invariant probability measure and let $\eps>0$. Then there exists a  horseshoe, i.e., a continuous embedding of an SFT $S$, such that
 \begin{enumerate}
  \item $h_\topo(S)>\max(h(f,\mu)-\eps,0)$;
  \item the period of $S$ is a period of $(f,\mu)$.
 \end{enumerate}
Moreover, if $\mu$ is totally ergodic, then the SFT $S$ is mixing.
\end{theorem}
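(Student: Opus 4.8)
The plan is to revisit Katok's construction and keep track of the period it produces. Recall the mechanism: one fixes a uniformly hyperbolic (Pesin) block $\Delta$ with $\mu(\Delta)>0$, chooses a finite family $\mathcal W$ of ``good return words'' --- genuine orbit segments that start and end in $\Delta$ and satisfy uniform hyperbolicity estimates --- and realizes the horseshoe $S$ as the one-step SFT on the alphabet of pairs (word of $\mathcal W$, position inside that word), in which every word may follow every word. The defining graph is strongly connected, so $S$ is an irreducible SFT carrying only hyperbolic measures, and its period equals $\gcd\{|w|:w\in\mathcal W\}$, since the shortest cycles of the graph are exactly the single words. A good return word of length $\ell$ exists precisely when $\ell$ lies in the return set $R:=\{\ell\geq1:f^\ell\Delta\text{ meets }\Delta\text{ in positive measure}\}$, and Katok's entropy count then supplies at least $e^{\ell(h(f,\mu)-\eps/2)}$ of them; thus taking all good words of a single large $\ell\in R$ already gives $h_\topo(S)>h(f,\mu)-\eps$, and one is free to enlarge $\mathcal W$ with good words of other lengths in $R$.

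The key point is that $d:=\gcd R$ is automatically a period of $(f,\mu)$. Indeed, every first-return time of $\Delta$ lies in $R$, hence in $d\ZZ$, so the map $\phi(y):=-\min\{n\geq0:f^ny\in\Delta\}\bmod d$, which is defined $\mu$-a.e.\ by ergodicity, takes values in $\ZZ/d\ZZ$ and satisfies $\phi\circ f=\phi+1$; therefore $\{\phi^{-1}(i)\}_{i\in\ZZ/d\ZZ}$ is a cyclic partition mod $\mu$, i.e.\ $d\in\per(f,\mu)$. Conversely, since $R$ is the semigroup generated by the first-return times it is cofinite in $d\ZZ$, so finitely many of its elements already have gcd $d$ while each such element can be chosen arbitrarily large. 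Choosing $\mathcal W$ to consist of good return words of those lengths, together with all good words of one large length for the entropy bound, yields $\gcd\{|w|:w\in\mathcal W\}=d$ and $h_\topo(S)>h(f,\mu)-\eps$: the resulting $S$ is an irreducible SFT of period exactly $d$, a period of $(f,\mu)$.

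The final assertion is then immediate: if $\mu$ is totally ergodic, $\per(f,\mu)=\{1\}$, which forces $d=1$, so $S$ is mixing. The $\max$ in (1) needs nothing extra, since a horseshoe has positive topological entropy by definition; hence $h_\topo(S)>0$, and combined with $h_\topo(S)>h(f,\mu)-\eps$ this gives $h_\topo(S)>\max(h(f,\mu)-\eps,0)$. The degenerate case $h(f,\mu)=0$ is handled the same way, via the part of Katok's theorem producing a horseshoe near a transverse homoclinic point of a hyperbolic periodic orbit detected by $\mu$, and the period discussion applies verbatim.

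I expect the main obstacle to lie entirely in the first paragraph, that is, in re-examining Katok's construction carefully enough to (a) obtain good return words of prescribed large lengths in $R$, with enough of them at a single length so that the entropy stays within $\eps$ of $h(f,\mu)$, and (b) confirm that the coded system generated by $\mathcal W$ is a genuine irreducible SFT of the stated period. Everything downstream --- in particular the identification of that gcd with a period of $(f,\mu)$ and the mixing statement --- is the short, soft argument sketched above.
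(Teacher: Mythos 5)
Your strategy is, at a high level, the same as the paper's: run Katok's construction with return segments of several lengths, arrange that those lengths have gcd equal to a period of $(f,\mu)$, and observe that this gcd is exactly the period of the resulting SFT. The period mechanism you use (first-return times to a fixed set of positive measure lie in $d\ZZ$, giving a cyclic cocycle, hence $d\in\per(f,\mu)$) is the content of the first half of the paper's Lemma~\ref{l.tauPeriod}, and that part of your argument is sound. But there are two problems with the details.

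The serious gap is that you assert the coded system $S$ is a \emph{horseshoe}, i.e.\ a continuous \emph{embedding} of the SFT built from concatenations of words in $\mathcal W$. In Katok's original construction, with a single return length $m$, the $(m,\eps)$-separation of the orbit segments forces distinct symbolic sequences to give distinct shadowing orbits (the synchronization times are the multiples of $m$, and at those times distinct centers are $\eps$-apart), so the shadowing map is injective. Once you allow words of several different lengths, the synchronization times of two different symbolic sequences no longer line up, that separation argument collapses, and the shadowing map is in general only a continuous \emph{factor} map $\pi:\Sigma\to M$, not an embedding. The paper handles this by invoking the result from~\cite{ABS} that $\Sigma$ contains a sub-SFT $\Sigma'$ with the same period, with $h_\topo(\Sigma')$ arbitrarily close to $h(f,\pi(\Sigma))$, and on which $\pi$ is injective. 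Without something playing this role, your ``one-step SFT on the alphabet of pairs'' is not yet a horseshoe, and the statement about the period of $S$ is about the wrong object. You flag item (b) as a remaining obstacle, but you have not identified that the issue is precisely the loss of injectivity of the coding, which is what needs an extra idea.

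A secondary issue: the claim that $R:=\{\ell\geq1:\mu(\Delta\cap f^{-\ell}\Delta)>0\}$ is the semigroup generated by the first-return times (hence cofinite in $d\ZZ$) is false in general. For instance, take a skew product $f(i,y)=(i+1,Ty)$ on $\ZZ/4\ZZ\times Y$ with $T$ ergodic, and let $\Delta=\{0,1\}\times A$; then $1\in R$ (provided $\nu(A\cap T^{-1}A)>0$) but $2\notin R$, because the $\ZZ/4\ZZ$-fibers of $\Delta$ and $f^{-2}\Delta$ are disjoint. So $R$ is not closed under addition. Fortunately, for the construction one does not actually need the elements of $R$ realizing the gcd to be arbitrarily large (the Pesin shadowing works for any return segment once the Pesin block is fixed; only the \emph{entropy count} requires one large length), so this wrong claim is also an unnecessary detour. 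The paper simply invokes the second half of Lemma~\ref{l.tauPeriod} to produce a positive-measure $B\subset\Lambda_\delta$ whose first-return times already have the desired gcd, and then adds finitely many return segments of $B$ to the length-$m$ Katok family; this cleanly sidesteps the semigroup question.
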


This theorem is due to Katok (\cite{KatokIHES} for the existence of a horseshoe, \cite{KH} for the approximation in entropy), except for the new additional information (2) about the period of the horseshoe, which we now explain.

Let $(S,\mu)$ be an ergodic measure-preserving system. Let $A$ be a Borel set with $\mu(A)>0$ and let $\tau_A:X\to\{1,2,\dots,\infty\}$ be the time to $A$: $\tau_A(x):=\inf\{n\geq1:S^nx\in A\}$.

\begin{lemma}\label{l.tauPeriod}
If $\tau_A(A)\subset p\NN$, then $p$ is a period of $(S,\mu)$.

If $(S,\mu)$ has a period $q$, then any set $A$ of positive measure contains a subset $B$ of positive measure such that the greatest common divisor $\gcd(\tau_B(B))$ is a multiple of $q$ and a period of $(S,\mu)$.

In particular, if $q$ is the maximum period of $(S,\mu)$ then $q=\gcd(\tau_B(B))$.  
\end{lemma}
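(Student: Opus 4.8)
The plan is to prove the three assertions of Lemma \ref{l.tauPeriod} in order, using the characterization of periods by measurable partitions into cyclically permuted pieces (the first exercise of Section \ref{sec.periods}).

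\textbf{First assertion.} Suppose $\tau_A(A)\subset p\NN$. I would set $X_i:=\{x: \tau_A(x)\equiv -i \pmod p\}$ for $i=0,\dots,p-1$ (with the convention that on $A$ itself one uses the return time, which lies in $p\NN$, so $A\subset X_0$ up to the hypothesis). One checks $S X_i = X_{i+1}$ modulo $\mu$: if $\tau_A(x)>1$ then $\tau_A(Sx)=\tau_A(x)-1$, and if $Sx\in A$ then $\tau_A(x)=1$ so $x\in X_{p-1}$ and $Sx$ has a return time in $p\NN$, hence lies in $X_0$. By ergodicity and $\mu(A)>0$, the function $\tau_A$ is finite a.e., so $\{X_0,\dots,X_{p-1}\}$ is a partition mod $\mu$. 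By the cited exercise this exhibits $p$ as a period.

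\textbf{Second assertion.} Now suppose $q$ is a period, witnessed by a partition $\{Y_0,\dots,Y_{q-1}\}$ with $SY_j=Y_{j+1}$, and let $A$ have positive measure. Since $A=\bigsqcup_j (A\cap Y_j)$, some $A\cap Y_{j_0}$ has positive measure; replacing $A$ by $S^{-j_0}(A\cap Y_{j_0})$ we may assume $A\subset Y_0$. Then by construction every return time of $A$ to itself is a multiple of $q$. Now I would iterate the construction: set $d_1:=\gcd(\tau_A(A))$, which is a multiple of $q$. If $A$ has a return-time subset with smaller gcd we pass to it, but more directly: apply the first assertion with $p=d_1$ to get that $d_1$ is a period, and observe $\gcd(\tau_A(A))=d_1$ already — so take $B:=A$. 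Actually the cleanest route is: let $d:=\gcd(\tau_A(A))$; since $\tau_A(A)\subset d\NN$, the first assertion shows $d$ is a period; and $d$ is a multiple of $q$ because every element of $\tau_A(A)$ is (as $A\subset Y_0$). So $B:=A$ works. Some care is needed when the initial $A$ is not contained in a single $Y_j$: there we genuinely must pass to the positive-measure piece $B=A\cap Y_{j_0}$ translated back, which is why the statement only claims a \emph{subset} $B$.

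\textbf{Third assertion.} If $q$ is the \emph{maximum} period, apply the second assertion to any positive-measure $A$: we get $B\subset A$ with $d:=\gcd(\tau_B(B))$ a period and a multiple of $q$. Since $q$ is maximal, $d\le q$; combined with $q\mid d$ this forces $d=q$. The one subtlety worth flagging is well-definedness: $\tau_B$ is finite $\mu$-a.e. by ergodicity and positivity of $\mu(B)$, so $\tau_B(B)$ (interpreted as the set of values taken on a full-measure subset of $B$) is a nonempty subset of $\NN^*$ and its gcd makes sense.

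\textbf{Main obstacle.} The genuinely delicate point is not any single computation but the bookkeeping in the second assertion: reconciling "there is a subset $B$ of positive measure with $\gcd(\tau_B(B))$ a multiple of $q$ and a period" with an arbitrary starting set $A$ that may be spread across several atoms $Y_j$. One must either translate $A$ into one atom (losing measure but keeping positivity) or argue that among the finitely many pieces $A\cap Y_j$ at least one behaves well. Checking that $SX_i=X_{i+1}$ holds \emph{modulo $\mu$} (the return-time function being only a.e.\ finite, and the edge case $\tau_A(x)=1$) is the routine part that nonetheless needs to be written carefully.
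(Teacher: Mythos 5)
Your proof is correct and follows the same route as the paper: the cyclic partition $X_i=\{\tau_A\equiv -i\pmod p\}$ for the first assertion, intersecting $A$ with one piece of a $q$-cyclic partition for the second, and the divisibility/maximality squeeze for the third. The one place where you make it harder than necessary is the second assertion: there is no need to translate $A$ into $Y_0$ and then "translate back." Just set $B:=A\cap Y_{j_0}$ directly — it is a subset of $A$, and since $B\subset Y_{j_0}$ with $S^qY_{j_0}=Y_{j_0}$, already $\tau_B(B)\subset q\NN\cup\{\infty\}$; after discarding the $\mu$-null set of points that do not return infinitely often, apply the first assertion with $p=\gcd(\tau_B(B))$. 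This is exactly what the paper does (with $Y_j=S^jX_0$), and it avoids the bookkeeping you flag as the "main obstacle," which in fact disappears once you notice that no translation is required.
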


\begin{proof}
For $0\leq i<p$, let $A_i:=\{x\in X:\tau_A(x)\in -i+p\ZZ\}$. $\mathcal A:=\{A_0,\dots,A_{p-1}\}$ is a partition of $X$ modulo $\mu$, since $\tau_A$ is finite $\mu$-a.e. on $X$ by ergodicity.  If $0\leq i<p-1$, then $S(A_i)\subset X\setminus A$ so $\tau_A(Sx)=\tau_A(x)-1$ for all $x\in A_i$ and $S(A_i)=A_{i+1}$. Now, $S(A_{p-1})\subset A_0\cup A$, but by assumption $A\subset A_0$, so $S(A_{p-1})=A_0$. 
Thus, $\mathcal A$ is a $p$-cyclically moving partition modulo $\mu$ and $p$ is a period of $\mu$. The first claim of the Lemma is proved.

Let $q$ be a period of $(S,\mu)$, so there exists a $q$-cyclic partition modulo $\mu$, i.e., a Borel set $X_0\subset X$ such that $S^q(X_0)=X_0$ and $\{X_0,S(X_0),\dots,S^{q-1}X_0\}$ are disjoint with union of measure $1$.

For some $0\leq i<q$, $B:=A\cap S^iX_0$ has positive measure.  Obviously $\tau_{B}(B)\subset q\NN\cup\{\infty\}$. By removing points that don't return infinitely often to $B$ (a $\mu$-negligible subset), we exclude the infinite value.  
Let $p:=\gcd(\tau_B(B))$. Obviously, $p$ is a multiple of $q$ and $\tau_B(B)\subset p\NN$, so $p$ is a period by the first part of this lemma. The second claim of the Lemma is proved.

The last claim is now immediate.
\end{proof}

\begin{proof}[Proof of Claim (2) in Theorem \ref{t.KatokPeriod}]
We consider Theorem S.5.9 and its proof in \cite[pp. 698-700]{KH}. We will use the notations from this text. The horseshoe is constructed by considering a Pesin set $\Lambda_\delta$ (a non-invariant compact set with good hyperbolicity estimates) and finding a large enough set $D_m$ of well-separated points that return after $m$ iterates to the Pesin set, near to themselves, for arbitrarily large times $m$. The horseshoe is then constructed as the set of orbits that shadow arbitrary concatenations of the previously mentioned orbit segments. In this way we have an SFT $\Sigma$ defined by those concatenations and a continuous factor map $\pi:\Sigma\to X$ defined by this shadowing. According to \cite[pp. 698-700]{KH}, $\pi(\Sigma)$ is the announced horseshoe with entropy $\log\#D_n/n$. 

We concentrate on the case $h(f,\mu)>0$ as the following arguments are easily adapted to the situation where $\mu$ is only assumed to be aperiodic (and then  $\# D_n\geq2$ is enough).

The lower bound (close to $\exp n h(S,\mu)$) for $\#D_n$ is obtained from a formula for the entropy also established in \cite{KatokIHES}. This formula shows that if $\mu$ is an ergodic measure, $\delta>0$, and if $N(n,\eps,\delta)$ is the minimal number of $(\eps,n)$-dynamical balls, the union of has measure $\geq1-\delta$,
 $$
   h(S,\mu) = \lim_{\eps\to0}\liminf_{n\to\infty} \frac1n\log N(n,\eps,\delta).
 $$
We apply this not to the Pesin set $\Lambda_\delta$ but to a subset with positive measure. Lemma \ref{l.tauPeriod} shows that one can find a set of positive measure $B\subset\Lambda_\delta$ such that $p:=\gcd(\tau_B(B))$ is a period of $(f,\mu)$ (the maximum period of $\mu$ if it exists). Applying Katok's construction, we get an SFT with a large period $m$. By its construction, $B$ contains finitely many points whose return times have greatest common divisor $p$. We add them to the previously mentioned set $D_m$. The entropy of the image of the resulting SFT, $\Sigma$, can only increase.

To conclude, it is convenient to use the result on continuous factors of SFTs from \cite{ABS}:  $\Sigma$ contains another SFT $\Sigma'$ with the same period and  $h_\topo(f,\Sigma')$ arbitrarily close to $h(f,\pi(\Sigma))$ such that $\pi|\Sigma'$ is injective.
\end{proof}

\subsection{Diffeomorphisms with hyperbolicity and mixing}\label{sec.hypmix}
We prove Theorem \ref{t.mixdiff} about diffeomorphisms with hyperbolicity and mixing and then its Corollaries.

\begin{proof}[Proof of Theorem \ref{t.mixdiff}]
We assume $h_\topo(T)>0$, since otherwise point (1) holds with $M_1=\emptyset$ and $M_2=M$ and point (2) is then trivial (the periodic-Bernoulli systems of zero entropy are the finite circular permutations).

Recall from Propositions \ref{p.BorelEmpiric} and \ref{p.BorelEntropy} the Borel maps $M:X\to\Proberg(T)$ and $h:\Prob(T)\to\RR$. Hence $M_2:=(h\circ M)^{-1}(\{h_\topo(T)\})$ is Borel and carries exactly the \mme's. 

We are going to show that $M_1:=M\setminus M_2$ is strictly universal for $\mathcal B(h_\topo(T))$ (see Sec.\ \ref{sec.aB.Univ}). By Corollary \ref{c.uniqStUn}, this will imply that $(M_1,T)$ is almost Borel isomorphic to a non positive-recurrent, mixing Markov shift, say $\Sigma_0$. Obviously $(M_1,T)\in\mathcal B(h_\topo(T))$. By Lemma \ref{l.SeqUnivSub}, it suffices to show that $M_1$ is $\mathcal B(h)$-universal  for $h<h_\topo(T)$, arbitrarily close. 

For such $h$, let $\mu\in\pen(T)$ be totally ergodic and hyperbolic with $h(T,\mu)>h$ as in assumption (\#). Theorem \ref{t.KatokPeriod} gives a mixing horseshoe $H\subset M$ with $h_\topo(T|H)>h$. By Theorem \ref{thm.Hochman1}, $H$ is $h$-universal. This completes the proof of point (1).

\medbreak

We turn to point (2). First (\S) is necessary since any ergodic measure of a Markov shift is carried by one of its countably many irreducible component, that each irreducible component carries at most one \mme\ and that this measure, if it exists, is period-Bernoulli.

Conversely, we build an isomorphic Markov shift assuming (\S).  For each $p\geq1$, let $E_p$ be the (empty, finite or countably infinite) set of m.m.e.'s that are $p$-Bernoulli. Let $\Sigma_p$ be a positive recurrent Markov shift with period $p$ and entropy $h_\topo(T)$ and $\hat\Sigma_p:=\{x\in\Sigma_p:h(\sigma,M(x))=h_\topo(T)\}$. We claim that $T$ is almost Borel isomorphic to the Markov shift:
 $$
    \Sigma =  \Sigma_0 \;\sqcup\;  \bigcup_{p\geq1} \Sigma_p\times(E_p,\id).
 $$
where $\Sigma_0$ is the mixing Markov shift introduced above. Note that $\Sigma_0$ is strictly $\mathcal B(h_\topo(T))$-universal.

Indeed, $\Sigma_0$ is almost Borel isomorphic to $\Sigma_0\times(\NN,\id)$ (see Ex. \ref{exo.countableunion}) and $\Sigma_0\sqcup\Sigma_p\setminus\hat\Sigma_p$ is almost Borel isomorphic to $\Sigma_0$. Hence ($\equiv$ representing almost Borel isomorphisms),
 $$\begin{aligned}
   \Sigma &\equiv \Sigma_0\times\NN \;\sqcup\;  \bigcup_{p\geq1} \left((\Sigma_p\setminus\hat\Sigma_p)\times E_p \sqcup \hat\Sigma_p\times E_p\right)\\
      &\equiv \Sigma_0 \;\sqcup\;  \bigcup_{p\geq1} \left(\Sigma_0\times E_p\sqcup\hat\Sigma_p\times E_p\right)\\
      &\equiv \Sigma_0  \;\sqcup\;  \bigcup_{p\geq1} \hat\Sigma_p\times E_p
      \equiv M_1\sqcup M_2 \equiv T,
 \end{aligned}$$
using Ornstein theory in the step before the last.
\end{proof}

\begin{proof}[Proof of Corollary \ref{c.sarig}]
Let $T$ be a $C^{1+}$-diffeomorphism of a compact surface with $h_\topo(T)>0$ and a mixing \mme\ $\mu_*$. Ruelle's inequality implies that $\mu_*$ is hyperbolic (see Sec.\ \ref{sec.back.hyp}). Also the \mme's are periodic-Bernoulli and countably many according to Sarig \cite{Sarig2}.

Thus Theorem \ref{t.mixdiff} shows that $T$ is almost Borel isomorphic to a Markov shift. The data in Corollary \ref{c.sarig} are clearly invariant. For the converse, observe that these data determine the Markov shift $\Sigma$ built in the proof of Theorem \ref{t.mixdiff}.
\end{proof}

\begin{proof}[Proof of Corollary \ref{c.berger}]
Let $T=H_{a,b}$ be a H\'enon-like map for a good parameter $(a,b)$ in the sens of Berger \cite{Berger}. According to that work, $T$ has a unique \mme\ which is Bernoulli and hyperbolic. Restricting to a bounded, open forward invariant set, we can apply Corollary \ref{c.sarig}. We obtain an almost Borel isomorphism to a positive-recurrent, mixing Markov shift with entropy $h(T)$.  
\end{proof}

\begin{proof}[Proof of Corollary \ref{c.BF}]
 According to Theorem 1.2 of \cite{BF}, these diffeomorphisms are entropy-conjugate to the initial Anosov diffeomorphism. Hence they have a unique \mme\ and this \mme\ is Bernoulli. This \mme\ is also hyperbolic by the proof of the injectivity of the factor map denoted by $\pi$ in the end of Sec. 6.2 of \cite{BF}. One concludes as in the proof of Corollary \ref{c.berger}. 
\end{proof}

\section{Diffeomorphisms beyond the mixing case}\label{sec.notmixing}

In this section we extend the analysis of diffeomorphisms of Theorem \ref{t.mixdiff}, still relying only on Katok's and Hochman's theorems. We use the notion of an union-entropy-periodic universal part from \cite{ABS} (which we will not re-prove here).

\subsection{The universal part}
Recall from Sec. \ref{sec.periods} the set of periods $\per(S,\mu)$ of an ergodic system $(S,\mu)$.

\begin{definition}\label{d.epdom}
A measure $\nu\in\pen(S)$  \emph{entropy-period dominates} a measure $\mu\in\pen(S)$ if  (1) $\per(S,\nu)\subset\per(S,\mu)$; and (2) $h(S,\nu)>h(S,\mu)$.
\end{definition}

\begin{theorem}\label{t.domdiff}
Any  $C^{1+}$-diffeomorphism $T$ of a compact manifold $M$, contains a Borel subsystem $M_1$ which:
 \begin{itemize}
  \item carries all ergodic measures which are entropy-period dominated by some hyperbolic measure;
  \item is almost Borel isomorphic to a Markov shift.
 \end{itemize}
\end{theorem}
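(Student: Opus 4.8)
The plan is to build $M_1$ as a countable union of Borel pieces, each carrying measures that are entropy-period dominated by a hyperbolic measure, and each almost Borel isomorphic to a "universal-at-period-set" Markov shift; then to assemble these into a single Markov shift. First I would fix, for each finite set $\pi$ of integers that arises as $\per(T,\mu)$ for some $\mu\in\pen(T)$ (or rather each such downward-closed, lcm-closed set, as characterized in the exercises of Section \ref{sec.periods}), and each rational threshold $h$, the Borel invariant set
\[
  M_{\pi,h} := \{x\in M : M_x \text{ is entropy-period dominated by a hyperbolic measure, } \per(T,M_x)\supseteq\pi,\ h(T,M_x)<h\}.
\]
Using Propositions \ref{p.BorelEmpiric} and \ref{p.BorelEntropy} together with the Borel character of the period conditions (a period $p$ of $M_x$ is witnessed by a cyclically moving Borel partition, and one checks these conditions are Borel in $x$), each $M_{\pi,h}$ is Borel. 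Then $M_1 := \bigcup M_{\pi,h}$ over all admissible $(\pi,h)$ is Borel, invariant, and by Definition \ref{d.epdom} carries exactly those ergodic measures that are entropy-period dominated by a hyperbolic measure.

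Next I would show each piece is universal for the appropriate class at a given period set. The key point is a "Katok theorem at a given period set" statement: if $\mu$ is hyperbolic, aperiodic, with $\per(T,\mu)\supseteq\pi$ and $h(T,\mu)>h$, then by Theorem \ref{t.KatokPeriod} (applied after passing to a return map as in Lemma \ref{l.tauPeriod}) there is a horseshoe $H$ whose SFT has period in $\per(T,\mu)$ and entropy $>h$; choosing the period carefully (taking $\gcd(\tau_B(B))$ as in Lemma \ref{l.tauPeriod}) one can arrange the period of $H$ to be any prescribed element of $\pi$, in particular, by iterating over a cofinal sequence of entropies, one embeds into $T$ mixing SFTs at each relevant period. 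This gives, for the subsystem carrying measures with $\per\supseteq\pi$ and entropy $<h$, an embedding of a mixing Markov shift of each needed period and entropy close to $h$; by Hochman's Theorem \ref{thm.Hochman1} and the period-sensitive universality tools of \cite{ABS} (the "union-entropy-periodic universal part" alluded to in the introduction to this section), this subsystem is strictly universal for the class of Borel systems all of whose ergodic measures have period set containing $\pi$ and entropy $<h$. The Cantor--Bernstein Lemma \ref{l.DCB} and Corollary \ref{c.uniqStUn} then identify it, up to almost Borel isomorphism, with a canonical Markov shift $\Sigma_{\pi,h}$ built as a countable disjoint union of the shifts $\Sigma^0_{t,p}$ of Section \ref{sec.back.shift}.

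Finally I would glue: order the countably many $(\pi,h)$ and split $M_1$ into disjoint Borel invariant pieces $M_1 = \bigsqcup_j N_j$ where $N_j$ carries exactly the ergodic measures first captured at stage $j$ (this is the same stratification device used in the proof of Lemma \ref{l.SeqUnivSub}: on $N_j$ the entropy of $M_x$ lies in a half-open interval and the period set equals a fixed $\pi_j$, which keeps images disjoint). Each $(N_j,T)$ is then strictly universal for its class, hence almost Borel isomorphic to $\Sigma_{\pi_j,h_j}$, and a countable disjoint union of Markov shifts is a Markov shift; so $(M_1,T)$ is almost Borel isomorphic to $\bigsqcup_j \Sigma_{\pi_j,h_j}$, a Markov shift, as claimed. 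The main obstacle I expect is the refinement of Katok's theorem needed to realize a \emph{prescribed} period (not merely "some" period) inside the horseshoe with controlled entropy, and the corresponding period-sensitive version of the Hochman machinery — precisely the content imported from \cite{ABS} — which must be invoked with care so that the glued pieces have disjoint images under the embeddings and the reassembled target is genuinely a Markov shift and not merely an abstract Borel system.
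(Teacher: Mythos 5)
Your overall strategy is in the same spirit as the paper's proof (Katok with periods feeds period-sensitive universality feeds a Markov shift), but you work much harder than necessary and introduce two genuine pitfalls. First, you try to define $M_1$ explicitly as the set of $x$ whose empirical measure $M_x$ is entropy-period dominated by some hyperbolic measure. That condition involves an existential quantifier over ergodic measures $\nu$, so the resulting set is a priori only analytic (a projection of a Borel set), not visibly Borel; you would need a separate argument to make it Borel. The paper avoids this entirely: it sets $M_1:=M_U$, the \emph{union-entropy-period universal part} imported as a black box from \cite{ABS}, which is Borel and already almost Borel isomorphic to a Markov shift. Since the theorem only asks that $M_1$ \emph{carry} all dominated measures (not carry \emph{only} such measures), it then suffices to verify an inclusion, and this is where Katok enters.

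Second, you claim that via Lemma \ref{l.tauPeriod} you can realize a horseshoe of \emph{any prescribed} period $p\in\pi$. This overstates what is proved: Theorem \ref{t.KatokPeriod} and Lemma \ref{l.tauPeriod} only give a horseshoe whose period is \emph{some} period of $\mu$ (a multiple of a prescribed period $q$, or the maximum period if it exists); you cannot in general hit an arbitrary prescribed element of $\pi$. The paper does not need this. Its actual proof is short: take $M_1:=M_U$; for $\mu$ entropy-period dominated by a hyperbolic $\nu$, apply Theorem \ref{t.KatokPeriod} to $\nu$ to get an embedded irreducible SFT with period $p\in\per(T,\nu)\subset\per(T,\mu)$ and topological entropy $>h(T,\mu)$ (possible since $h(T,\nu)>h(T,\mu)$), whence $h(T,\mu)<u_T(p)$ for some $p\in\per(T,\mu)$, and the defining property of $M_U$ gives $\mu(M_1)=1$. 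Your stratification and gluing over pairs $(\pi,h)$ is essentially an attempt to re-derive the $M_U$ construction of \cite{ABS}, which the paper deliberately does not re-prove; if you want to keep your route, you should at minimum rephrase the definition of $M_1$ so it is manifestly Borel (e.g.\ in terms of the universality sequence $u_T$ rather than an existential over dominating measures) and weaken the "prescribed period" claim to "a period of $\nu$, hence a period of $\mu$."
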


\begin{proof}
Recall the following from \cite{ABS}. Any Borel system such as $(M,T)$ has a subsystem $(M_U,T_U)$ called its \emph{union-entropy-period universal part} and a sequence $u_T:\NN^*\to[0,\infty]$ called its \emph{universality sequence} such that ($\Sigma_{t,p}^0$ denotes any irreducible Markov shift with entropy $t$ and period $p$ and no \mme):
 $$
   u_T(p) := \sup\{t>0: \exists \Sigma^0_{t,p} \text{ that almost Borel embeds into } (M,T)\}.
 $$
with the following properties:
 \begin{itemize}
   \item $(M_U,T_U)$ is almost Borel isomorphic to a Markov shift;
   \item it carries all $\mu\in\Pena(S)$ such that $h(T,\mu)<u_T(p)$ for some $p\in\per(T,\mu)$.
 \end{itemize}
We set $M_1:=M_U$.
Now, let $\mu\in\Pena(T)$ be entropy-period dominated by a hyperbolic ergodic measure $\nu$. Applying Katok's Theorem \ref{t.KatokPeriod}  to $(T,\nu)$ yields a continuous embedding into $(M,T)$ of some  irreducible SFT $\Sigma$ with period $p\in \per(T,\mu)$ and $h_\topo(\Sigma)> h(T,\mu)$. It follows that $h(f,\mu)<u_T(p)$. Thus $\mu(M_1)=1$.
\end{proof}

\subsection{Proof of Theorem \ref{t.hypdiff}}
The statement is in term of the \epm\ measures, which generalize  \mme's (see Def. \ref{d.epm}) (Note, that $\mu$ is \epm\ if and only if it is not dominated by any other measure).

\begin{xca}
Let $\Sigma^+_{t,p}$ be an irreducible Markov shift which has period $p$, entropy $t$ and is  positive recurrent. Determine the \epm\ measures of  $\Sigma=\Sigma^+_{1,1}\cup\Sigma^+_{2,2}\cup\Sigma^+_{2,3}\cup\Sigma^+_{6,4}$.
\end{xca}

\begin{proof}[Proof of Remark \ref{rem.epmHyp}]
This is a direct consequence of Katok's theorem: any aperiodic hyperbolic ergodic measure with zero entropy is dominated by another measure which is hyperbolic with positive entropy.
\end{proof}

We deduce Theorem \ref{t.hypdiff} from Theorem \ref{t.domdiff}. The latter yields a Borel subsystem $M_1$, almost Borel isomorphic to a Markov shift.

Propositions \ref{p.BorelEmpiric} and \ref{p.BorelEntropy} show that $M_*:=\{x\in M:h(T,M(x))<h_0\}$ is a Borel subset such that $\mu(M_*)=1$ if and only if $h(T,\mu)< h_0$.

We set $M_0:=M_*\setminus M_1$ and $M_2:=M\setminus(M_0\cup M_1)$. Obviously $M=M_0\sqcup M_1\sqcup M_2$ is an invariant Borel partition of $M$ and claims (1) and (2) are clear.

Let $\mu\in\Pena(T)$ with $\mu(M_2)=1$.  As $\mu(M_*)=0$, $h(T,\mu)\geq h_0$. If $\mu$ is dominated by some measure $\nu$, then $h(T,\nu)>h(T,\mu)\geq h_0$ hence $\nu$ is hyperbolic, contradicting $\mu(M_1)=0$. Therefore $\mu$ is not dominated by any measure: it is \epm.
Theorem \ref{t.hypdiff} is proved.

\bibliographystyle{amsplain}

\end{document}